\newtheorem{theorem}{Theorem}[section]
\newtheorem{lemma}[theorem]{Lemma}
\newtheorem{corollary}[theorem]{Corollary}
\newtheorem{fact}[theorem]{Fact}
\theoremstyle{definition}
\newtheorem{definition}[theorem]{Definition}
\newtheorem{conjecture}[theorem]{Conjecture}
\newenvironment{example}
{\pushQED{\qed}\examplex}
{\popQED\endexamplex}
\theoremstyle{remark}
\newtheorem{remark}[theorem]{Remark}
\newcommand{\bigboxtimes}{\mathop{\mathlarger{\mathlarger{\boxtimes}}}
\displaylimits}
\newcommand{\super}{\mathcal{U}}
\tikzstyle{vertex}=[circle, draw, inner sep=0pt, minimum size=4pt,fill=black]
\newcommand{\vertex}{\node[vertex]}
\tikzstyle{whitevertex}=[circle, draw, inner sep=0pt, minimum size=4pt,fill=white]
\tikzstyle{hollowvertex}=[circle, draw, inner sep=0pt, minimum size=4pt, fill=white]
\tikzstyle{phantomvertex}=[circle, draw, inner sep=0pt, minimum size=4pt,color=white]
\tikzset{
  .../.tip={[sep=2pt 2]
    Round Cap[]. Circle[length=0pt 2,sep=2pt] Circle[length=0pt 2,sep=2pt] Circle[length=0pt 2, sep=2pt 2]}}
\tikzset{
  .../.tip={[sep=2pt 2]
    Round Cap[]. Circle[length=0pt 2,sep=2pt] Circle[length=0pt 2,sep=2pt] Circle[length=0pt 2, sep=2pt 2]}}
\tikzset{
      ellipsis/.tip={
Square[length=2pt,sep=0pt,color=white] Circle[length=1pt,sep=0pt,color=black] Square[length=1pt,sep=0pt,color=white]
Circle[length=1pt,sep=0pt,color=black] Square[length=1pt,sep=0pt,color=white]
Circle[length=1pt,sep=0pt,color=black] Square[length=2pt,sep=0pt,color=white]}}
\tikzset{middlearrow/.style n args={2}{
        decoration={markings,
            mark= at position {#2} with {\arrow{#1}} ,
        },
        postaction={decorate}
    }
}
\begin{document}

	\title{The Threshold Strong Dimension of a Graph }
\author{Nadia Benakli, Novi H. Bong, Shonda Dueck (Gosselin),\\ Linda Eroh, Beth Novick, and Ortrud R. Oellermann\thanks{Supported by an NSERC Grant CANADA, Grant number RGPIN-2016-05237}}


\maketitle

\begin{abstract}
Let $G$ be a connected graph and $u,v$ and $w$ vertices of $G$. Then $w$ is said to {\em strongly resolve} $u$ and $v$, if there is either a shortest $u$-$w$ path that contains $v$ or a shortest $v$-$w$ path that contains $u$. A set $W$ of vertices of $G$ is a {\em strong resolving set} if every pair of vertices of $G$ is strongly resolved by some vertex of $W$.  A smallest strong resolving set of a graph is called a {\em strong basis} and its cardinality, denoted $\beta_s(G)$, the {\em strong dimension} of $G$. The {\em threshold strong dimension} of a graph $G$, denoted $\tau_s(G)$, is the smallest strong dimension among all graphs having $G$ as spanning subgraph. A graph whose strong dimension equals its threshold strong dimension is called $\beta_s$-{\em irreducible}. In this paper we establish a geometric characterization for the threshold strong dimension of a graph $G$ that is expressed in terms of the smallest number of paths (each of sufficiently large order) whose strong product admits a certain type of embedding of $G$.  We demonstrate that the threshold strong dimension of a graph is not equal to the previously studied threshold dimension of a graph. Graphs with strong dimension $1$ and $2$ are necessarily $\beta_s$-irreducible. It is well-known that the only graphs with strong dimension $1$ are the paths. We completely describe graphs with strong dimension $2$ in terms of the strong resolving graphs introduced by Oellermann and Peters-Fransen. We obtain sharp upper bounds for the threshold strong dimension of general graphs and determine exact values for this invariant for certain subclasses of trees.\\
{\bf Key words:} strong dimension of graphs, threshold strong dimension, embeddings in strong products of graphs, bounds for the threshold strong dimension, graphs with vertex covering number 2 realizable by strong resolving graphs, threshold strong dimension and trees

\end{abstract}

\section{Introduction}
Motivated by a problem in network security, Slater \cite{Slater1975} initiated the study of the metric dimension of a graph. Let $u,v,$ and $w$  be vertices of a connected graph $G$. Then $w$ is said to {\em resolve} $u$ and $v$, if the distance $d_G(u,w)$ from $u$ to $w$ does not equal the distance $d_G(v,w)$ from $v$ to $w$. If $G$ is clear from context we will write $d(x,y)$ instead of $d_G(x,y)$.  A set $W$ of vertices of $G$ resolves $G$ if every pair of vertices in $G$ is {\em resolved} by some vertex of $W$. A smallest resolving set of a graph is called a {\em metric basis} and its cardinality the {\em metric dimension} of $G$, denoted by $\beta(G)$. Thus, if $W$ is a resolving set for a graph $G$, then the location of an intruder in a network can be uniquely determined if distance detecting devices are placed at each of the vertices in $W$. If $w_1,w_2, \ldots, w_k$ is an ordering of the vertices of $W$, the set of vectors $\{(d(v,w_1), d(v,w_2), \ldots, d(v,w_k)): v \in V(G)\}$ are called the {\em distance vectors} of $G$ relative to the given ordering of the vertices of $W$.

Seb\"{o} and Tannier \cite{SeboTannier2004} observed that there are non-isomorphic graphs $G_1$ and $G_2$ on the same vertex set that share a common metric basis, say $W$, and that have the same distance vectors relative to some ordering of the vertices of $W$. This motivated the introduction of a stronger version of the metric dimension of a graph for which a corresponding basis uniquely determines all adjacencies of the graph. A vertex $w$ is said to {\em strongly resolve} two vertices $u$ and $v$ of a graph $G$ if there is either a shortest $u$-$w$ path that contains $v$ or a shortest $v$-$w$ path that contains $u$ or, equivalently, either the interval between $u$ and $w$ contains $v$ or the interval between $v$ and $w$ contains $u$, where the {\em interval} between two vertices is the collection of all vertices that lie on some shortest path between these vertices.
If every pair of vertices of  $G$ is strongly resolved by a vertex in some set  $W$ of vertices of $G$, then $W$ is a {\em strong resolving set} for $G$. A smallest strong resolving set is called a {\em strong basis} and its cardinality the {\em strong dimension} of $G$, denoted by $\beta_s(G)$. Thus a strong resolving set of a graph is certainly also a resolving set.

It is natural to ask if the number of detecting devices that are required to uniquely determine the location of an intruder in a network could be reduced if additional links between some pairs of nodes are added. Or equivalently one may ask by how much the dimension of a graph can be reduced by adding edges. The question of how the metric dimension of a graph relates to that of its subgraphs had previously been posed, for example, in \cite{Chartrandetal2000} and \cite{Khulleretal1996}. Mol, Murphy and Oellermann in \cite{MolMurphyOellermann2019} introduced the problem of determining the smallest metric dimension among all graphs having a given graph $G$ as spanning subgraph. This minimum is called the {\em threshold dimension} of $G$ and is denoted by $\tau(G)$. Let $\mathcal{U}(G)$ denote that family of graphs having $G$ as spanning subgraph.  If $H \in \mathcal{U}(G)$ is such that $\beta(H) =\tau(G)$, then $H$ is called a \emph{threshold graph} of $G$. Graphs whose metric dimension cannot be lowered by adding edges will be referred to as {\em $\beta$-irreducible}. So $G$ is $\beta$-irreducible if and only if $\beta(G)=\tau(G)$. Graphs that are not $\beta$-irreducible are called \emph{$\beta$-reducible}. The seminal work on  $\beta$-irreducible graphs appears in \cite{MolMurphyOellermann2019_2}. In this paper we introduce and study the analogue of the threshold dimension for the strong dimension of a graph.

\begin{definition}
The {\em threshold strong dimension} of a graph $G$, denoted by $\tau_s(G)$,  is defined as the smallest strong dimension among all graphs having $G$ as spanning subgraph. A graph $H \in \mathcal{U}(G)$ such that $\beta_s(H) =\tau_s(G)$ is called a \emph{strong dimension  threshold graph}.
A graph $G$ is $\beta_s$-\emph{irreducible} if $\beta_s(G)=\tau_s(G)$ and is $\beta_s$-\emph{reducible} otherwise.
\end{definition}

In Section \ref{preliminaries} we introduce some known results and useful tools. In Section \ref{geometricinterpretation} we establish a geometric interpretation for the threshold strong dimension of a graph in terms of certain types of embeddings in strong products of graphs and we show that there are graphs $G$ for which $\tau_s(G) \ne \tau(G)$. Graphs with strong dimension $2$ are $\beta_s$-irreducible. We study their structure in Section \ref{vertexcoveringnumber2}. Bounds for $\tau_s(G)$ for general graphs are obtained in Section \ref{bounds}. We conclude by finding the threshold strong dimension for some special classes of graphs in Section \ref{specialclasses}.

\section{Preliminaries} \label{preliminaries}

\subsection{The strong resolving graph: a tool for finding the strong dimension}

In \cite{OellermannPetersFransen2007} it was shown that the problem of finding the strong dimension of a connected graph can be transformed to a vertex covering problem. We begin by describing this transformation. Let $u$ and $v$ be vertices of a connected graph $G$. The vertex $v$ is said to be {\em maximally distant} from $u$, denoted $v$ MD $u$, if every neighbour of $v$ is no further from $u$ than $v$, i.e., $d(u,x) \le d(u,v)$ for all $x \in N(v)$. If $u$ MD $v$ and $v$ MD $u$, then we say $u$ and $v$ are \emph{mutually maximally distant} and denote this by $u$ MMD $v$. The \emph{strong resolving} graph $G_{SR}$ of $G$ has as its vertex set $V(G)$ and two vertices $u,v$ of $G_{SR}$ are adjacent if and only if $u$ MMD $v$. For any graph $H$, let $\alpha(H)$ denote the vertex covering number of the graph $H$, i.e., the cardinality of a smallest set $S$ of vertices of $H$ such that every edge is incident with a vertex of $S$. The following reduction of the strong dimension problem to the vertex covering problem was given in  \cite{OellermannPetersFransen2007}.

\begin{theorem} {\em \cite{OellermannPetersFransen2007}} \label{G_SR}
If $G$ is a connected graph, then $\beta_s(G) =\alpha(G_{SR})$.
\end{theorem}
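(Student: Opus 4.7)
The plan is to establish the equality $\beta_s(G)=\alpha(G_{SR})$ by showing that a subset $W\subseteq V(G)$ is a strong resolving set of $G$ if and only if $W$ is a vertex cover of the strong resolving graph $G_{SR}$. Both directions hinge on understanding what it means for a vertex to strongly resolve a pair and how this interacts with the notion of being maximally distant.

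For the inequality $\beta_s(G)\ge \alpha(G_{SR})$, I would let $W$ be any strong resolving set and show it covers every edge of $G_{SR}$. Fix an edge $uv$ of $G_{SR}$, i.e., $u$ MMD $v$, and pick $w\in W$ that strongly resolves the pair $\{u,v\}$. Without loss of generality there is a shortest $u$-$w$ path containing $v$. The key observation is that if $w\ne v$, then $v$ has some neighbour $v'$ on this shortest path with $d(u,v')=d(u,v)+1$, contradicting $v\text{ MD }u$. Hence $w\in\{u,v\}$, so $W$ must contain an endpoint of every edge of $G_{SR}$.

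For the reverse inequality $\beta_s(G)\le \alpha(G_{SR})$, I would prove the following key lemma: for every pair of vertices $u,v\in V(G)$ there exist vertices $u^*,v^*\in V(G)$ with $u^*$ MMD $v^*$ such that at least one of $u^*, v^*$ strongly resolves the pair $\{u,v\}$. Granted this, any vertex cover $W$ of $G_{SR}$ must contain one of $u^*, v^*$, and hence some vertex of $W$ strongly resolves $\{u,v\}$; so $W$ is a strong resolving set of $G$.

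To prove the key lemma, I would extend a shortest $u$-$v$ path greedily: starting at $u$ and passing through $v$, continue along a geodesic from $u$ for as long as possible to reach a vertex $v^*$ that is MD from $u$ and on some shortest $u$-$v^*$ path contains $v$. Then extend from $v^*$ through $u$ to a vertex $u^*$ that is MD from $v^*$ and satisfies $d(u^*,v^*)=d(u^*,u)+d(u,v^*)$. By construction, $u^*$ MD $v^*$ and $v$ lies on a shortest $u$-$v^*$ path, so $v^*$ strongly resolves $\{u,v\}$. The main obstacle is to verify that $v^*$ MD $u^*$ as well, so that $u^*$ MMD $v^*$; this is where maximality of the extension (and the triangle inequality comparing distances from $u^*$ to $v^*$ versus to neighbours of $v^*$, exploiting that $u$ lies on a $v^*$-$u^*$ geodesic) must be used carefully. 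Once this symmetry is established, the lemma follows and the theorem is complete.
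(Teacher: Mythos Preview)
The paper does not give its own proof of this theorem; it is quoted from \cite{OellermannPetersFransen2007} and stated without argument. Your outline is correct and is essentially the proof in that reference: a strong resolving set must cover $G_{SR}$ because an MMD pair can only be strongly resolved by one of its own members, and conversely every pair $u,v$ can be ``pushed out'' to an MMD pair $u^*,v^*$ one of whose members strongly resolves $u,v$.

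The step you flag as the main obstacle---showing that $v^*$ is MD from $u^*$ once you have $v^*$ MD $u$, $u^*$ MD $v^*$, and $u$ on a $u^*$--$v^*$ geodesic---is in fact routine. For any neighbour $x$ of $v^*$,
\[
d(u^*,x)\;\le\; d(u^*,u)+d(u,x)\;\le\; d(u^*,u)+d(u,v^*)\;=\; d(u^*,v^*),
\]
where the middle inequality uses $v^*$ MD $u$ and the final equality uses that $u$ lies on a shortest $u^*$--$v^*$ path. Hence $v^*$ MD $u^*$, so $u^*$ MMD $v^*$, and your key lemma goes through. (In fact both $u^*$ and $v^*$ strongly resolve $\{u,v\}$, since $d(u^*,v)=d(u^*,u)+d(u,v)$ follows from $d(u^*,v^*)=d(u^*,u)+d(u,v)+d(v,v^*)$.)
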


\subsection{The threshold dimension of a graph}

For a connected graph $G$, let $\mbox{diam}(G)$ denote the \emph{diameter} of $G$, i.e., the maximum distance between a pair of vertices of $G$.

If $G_1,G_2,\dots, G_k$ are graphs, then their \emph{strong product} is the graph
\[
G_1\boxtimes G_2\boxtimes\cdots\boxtimes G_k=\bigboxtimes_{i=1}^k G_i,
\]
with vertex set $\{(x_1,x_2,\dots,x_k)\colon\ x_i\in V(G_i)\}$, and for which two distinct vertices $x=(x_1,x_2,\dots,x_k)$ and $y=(y_1,y_2,\dots, y_k)$ are adjacent if and only if for every $1\leq i\leq k$, either $x_iy_i\in E(G_i)$ or $x_i=y_i$.  The distance between $x$ and $y$ in $G_1\boxtimes G_2\boxtimes\cdots\boxtimes G_k$ is given by $\max\{d_{G_i}(x_i,y_i)\colon\ 1\leq i\leq k\}.$  For a graph $G$, we let $G^{\boxtimes, k}$ denote the strong product of $k$ copies of $G$, i.e.,
\[
G^{\boxtimes,k}=\displaystyle\bigboxtimes_{i=1}^k G.
\]

Let $G$ and $H$ be graphs.  A map $\varphi:V(G)\rightarrow V(H)$ is called an \emph{embedding} of $G$ in $H$ if it is injective and preserves the edge relation (i.e., if $xy\in E(G)$, then $\varphi(x)\varphi(y)\in E(H)$).

If $G$ is a subgraph of $H$, then we say that $G$ is an \emph{isometric subgraph} of $H$ if $d_G(u,v)=d_H(u,v)$ for all vertices $u,v\in V(G)$.

Recall that $\super(G)$ denotes the set of all graphs that have $G$ as spanning subgraph. For a graph $G$ and a subset $W\subseteq V(G)$, we let $G[W]$ denote the subgraph of $G$ induced by $W$.  For an embedding $\varphi$ of $G$ in $H$, we let $\varphi(G)=H[\varphi(V(G))]$, i.e., $\varphi(G)$ is the subgraph of $H$ induced by the range of $\varphi$.  Thus, the graph $\varphi(G)$ is isomorphic to the graph $G'\in \super(G)$ with vertex set $V(G')=V(G)$ and edge set $E(G')=\{xy\colon\ \varphi(x)\varphi(y)\in E(\varphi(G))\}.$

We next describe the geometric interpretation of the threshold dimension of a graph developed in \cite{MolMurphyOellermann2019}. To do this, we let $V(P_n)=\{0,\ldots,n-1\}$. Thus, the vertices of $P_n^{\boxtimes, k}$ are $k$-tuples over the set $\{0, \ldots, n-1\}$.  With this choice of notation for the vertex set of $P_n$,  distances in $P_n^{\boxtimes,k}$ can easily be computed.

\begin{fact} \label{distanceinstrongproducts}
If $x=(x_1, \ldots, x_k)$ and $y=(y_1, \ldots, y_k)$ are in $V\left(P_n^{\boxtimes, k}\right)$, then
\[
d(x,y)=\mathrm{max}\{|x_i-y_i|\colon\ 1 \leq i \leq k\}.
\]
In particular, if $x$ and $y$ are distinct, then they are adjacent if and only if $|x_i-y_i|\leq 1$ for every $1 \leq i \leq k$.
\end{fact}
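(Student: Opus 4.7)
The plan is to derive both assertions directly from two ingredients that are already on the table: the general distance formula for strong products stated in the paragraph just above the fact, namely
\[
d_{G_1\boxtimes\cdots\boxtimes G_k}(x,y)=\max\{d_{G_i}(x_i,y_i):1\le i\le k\},
\]
and the elementary distance formula in a path, $d_{P_n}(a,b)=|a-b|$ when $V(P_n)=\{0,\ldots,n-1\}$. Substituting $G_i=P_n$ for every $i$ in the strong product formula and using $d_{P_n}(x_i,y_i)=|x_i-y_i|$ yields the first displayed identity, so the only thing to verify from scratch is $d_{P_n}(a,b)=|a-b|$, which is immediate since the edges of $P_n$ are precisely the pairs $\{i,i+1\}$ for $0\le i\le n-2$ and thus the sequence $a,a\pm 1,\ldots,b$ is a path of length $|a-b|$, while any walk from $a$ to $b$ must change the coordinate by at least $|a-b|$ in total.

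For the second assertion I would argue in two equivalent ways and pick whichever reads most cleanly. The direct route uses the definition of adjacency in a strong product: distinct $x$ and $y$ are adjacent in $P_n^{\boxtimes,k}$ iff, for every coordinate $i$, either $x_i=y_i$ or $x_iy_i\in E(P_n)$, i.e.\ $|x_i-y_i|=1$. Combining both possibilities gives exactly $|x_i-y_i|\le 1$ for every $i$. The alternative route is to note that in any graph adjacency coincides with being at distance $1$, so by the already proved distance formula, distinct $x,y$ are adjacent iff $\max_i |x_i-y_i|=1$, which (given $x\ne y$) is equivalent to $|x_i-y_i|\le 1$ for all $i$.

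There is no real obstacle here: both parts are essentially bookkeeping once the general strong product distance formula is invoked. The only care required is to handle the distinctness hypothesis in the adjacency statement, since without it the condition $|x_i-y_i|\le 1$ for all $i$ would also be satisfied by $x=y$, which is of course not an edge.
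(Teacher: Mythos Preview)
Your proposal is correct and mirrors exactly what the paper intends: the paper states this as a Fact without proof, relying (as you do) on the general strong product distance formula given in the preceding paragraph together with the obvious identity $d_{P_n}(a,b)=|a-b|$. There is nothing to add.
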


The choice of the vertex labels in $V(P_n)$ is important since they correspond to distances, and thus the labels of the vertices of $P_n^{\boxtimes,k}$ will correspond to vectors of distances.  Let $G$ be a connected graph with resolving set $W=\{w_1,w_2,\dots,w_k\}.$  Then every vertex $x\in V(G)$ is uniquely determined by its vector of distances to vertices in $W$, given  by $\left(d_G(x,w_1),d_G(x,w_2),\dots,d_G(x,w_k)\right)$.  It was shown in \cite{MolMurphyOellermann2019} that the map which takes every vertex $x$ to this vector of distances to $W$ is an embedding of $G$ in $P^{\boxtimes,k}$ for some path $P$.

It was also shown in \cite{MolMurphyOellermann2019} that if $W=\{w_1,w_2,\ldots,w_k\}$ is a resolving set for some graph in $\super(G)$, then there is an embedding $\varphi$ of $G$ in $P^{\boxtimes,k}$ for some path $P$, such that for every vertex $x\in V(G)$, the label of $\varphi(x)$ is exactly the vector of distances in $\varphi(G)$ from $\varphi(x)$ to the vertices of $\varphi(W)$.  More formally these embeddings are defined as follows:

\begin{definition}
Let $G$ be a graph, let $W=\{w_1,w_2,\dots,w_k\}$ be a subset of $V(G)$, and let $P$ be a path.  A \emph{$W$-resolved embedding} of $G$ in $P^{\boxtimes,k}$ is an embedding $\varphi$ of $G$ in $P^{\boxtimes,k}$ such that for every $x\in V(G)$, we have
\[
\varphi(x)=\left(d_{\varphi(G)}(\varphi(x),\varphi(w_1)),
\dots,d_{\varphi(G)}(\varphi(x),\varphi(w_k))\right),
\]
i.e., for every $1\leq i\leq k$, the $i$th coordinate of $\varphi(x)$ is exactly the distance between $\varphi(w_i)$ and $\varphi(x)$ in $\varphi(G)$.
\end{definition}

The geometric interpretation of the threshold dimension of a graph given in \cite{MolMurphyOellermann2019} is summarized in the following two results.

\begin{theorem} {\em \cite{MolMurphyOellermann2019}} \label{Correspondence}
Let $G$ be a connected graph of diameter $D$, and let $W=\{w_1,w_2,\dots,$ $w_k\}\subseteq V(G)$.  Then $W$ is a resolving set for some graph $H\in \super(G)$ if and only if there is a $W$-resolved embedding of $G$ in $P_{D+1}^{\boxtimes,k}$.
\end{theorem}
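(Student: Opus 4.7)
The plan is to prove the biconditional by constructing, in each direction, the natural distance-vector map into the strong product of paths.

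For the forward direction, I assume $W=\{w_1,\ldots,w_k\}$ resolves some $H\in\super(G)$. Since $H$ is a spanning supergraph of $G$, $\mathrm{diam}(H)\le D$, so the map $\varphi\colon V(G)\to V(P_{D+1}^{\boxtimes,k})$ given by $\varphi(x)=(d_H(x,w_1),\ldots,d_H(x,w_k))$ really does land in $P_{D+1}^{\boxtimes,k}$. Injectivity is immediate from $W$ being a resolving set for $H$. That $\varphi$ sends edges of $G$ to edges of $P_{D+1}^{\boxtimes,k}$ follows from the triangle inequality: whenever $xy\in E(G)\subseteq E(H)$, we have $|d_H(x,w_i)-d_H(y,w_i)|\le 1$ for every $i$, which puts $\varphi(x)\varphi(y)$ in the edge set by Fact~\ref{distanceinstrongproducts}.

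The delicate part is verifying the $W$-resolved condition $d_{\varphi(G)}(\varphi(x),\varphi(w_i))=d_H(x,w_i)$. By Fact~\ref{distanceinstrongproducts}, the distance in the ambient product $P_{D+1}^{\boxtimes,k}$ between $\varphi(x)$ and $\varphi(w_i)$ equals $\max_{j}|d_H(x,w_j)-d_H(w_i,w_j)|$; the $j=i$ term equals $d_H(x,w_i)$, and the triangle inequality in $H$ bounds every other term by the same quantity, so this maximum is exactly $d_H(x,w_i)$. This gives a lower bound on $d_{\varphi(G)}(\varphi(x),\varphi(w_i))$. For the matching upper bound, I note that the very same triangle-inequality argument shows that $\varphi$ embeds all of $H$ (not merely $G$) into $P_{D+1}^{\boxtimes,k}$, so any shortest $x$-$w_i$ path in $H$ lifts through $\varphi$ to a path of the same length inside $\varphi(G)$. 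Equality then follows.

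For the converse, I suppose $\varphi\colon V(G)\to V(P_{D+1}^{\boxtimes,k})$ is a $W$-resolved embedding, and define $H$ on the vertex set $V(G)$ by declaring $xy\in E(H)$ precisely when $\varphi(x)\varphi(y)\in E(\varphi(G))$. Since $\varphi$ embeds $G$, every edge of $G$ becomes an edge of $H$, so $H\in\super(G)$, and by construction $\varphi$ is a graph isomorphism from $H$ onto $\varphi(G)$. The $W$-resolved condition then reads $\varphi(x)=(d_H(x,w_1),\ldots,d_H(x,w_k))$ for every $x$, so the injectivity of $\varphi$ forces distinct distance vectors for distinct vertices, confirming that $W$ resolves $H$. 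The main obstacle I anticipate is the forward-direction distance calculation inside the induced subgraph $\varphi(G)$: one must carefully distinguish between distances in $\varphi(G)$, in $H$, and in the ambient $P_{D+1}^{\boxtimes,k}$, and the key observation that unlocks the argument is that the distance-vector map actually embeds the larger graph $H$, so that shortest paths in $H$ survive as paths of the same length in $\varphi(G)$.
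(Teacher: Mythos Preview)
The paper does not supply its own proof of Theorem~\ref{Correspondence}: the result is quoted from \cite{MolMurphyOellermann2019} and used as a black box. So there is no in-paper argument to compare against.

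That said, your proof is correct and is exactly the natural one. The only point worth making explicit is the one you already flagged: in the forward direction, the induced subgraph $\varphi(G)$ is computed on the vertex set $\varphi(V(G))=\varphi(V(H))$, and since every edge of $H$ (not just of $G$) maps to an edge of $P_{D+1}^{\boxtimes,k}$ by the triangle inequality, the image of any $H$-geodesic is a path of the same length in the induced subgraph $\varphi(G)$. Together with the ambient lower bound $d_{P_{D+1}^{\boxtimes,k}}(\varphi(x),\varphi(w_i))=d_H(x,w_i)$, this pins down $d_{\varphi(G)}(\varphi(x),\varphi(w_i))$ exactly, which is the content of the $W$-resolved condition. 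The converse is routine, as you note. Your argument is essentially what Lemma~\ref{equality} in the present paper is rederiving in a special case, and it matches the spirit of the original source.
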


The following consequence of this theorem gives a  geometric interpretation for the threshold dimension.

\begin{corollary} {\em \cite{MolMurphyOellermann2019}} \label{ThresholdEmbedding}
Let $G$ be a connected graph of diameter $D$. Then $\tau(G)$ is the minimum cardinality of a set $W\subseteq V(G)$ such that there is a $W$-resolved embedding of $G$ in $P^{\boxtimes,|W|}_{D+1}$.
\end{corollary}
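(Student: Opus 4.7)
My plan is to deduce the corollary directly from Theorem~\ref{Correspondence} by unwinding the definitions. First, I would recall that
\[
\tau(G)=\min\{\beta(H)\colon H\in\super(G)\}=\min\{|W|\colon W\subseteq V(G)\text{ resolves some }H\in\super(G)\},
\]
since $V(H)=V(G)$ for every $H\in\super(G)$ and $\beta(H)$ is the smallest cardinality of a resolving set of $H$. So the corollary reduces to showing that the class of sets $W\subseteq V(G)$ resolving \emph{some} spanning supergraph of $G$ coincides with the class of sets admitting a $W$-resolved embedding of $G$ into $P_{D+1}^{\boxtimes,|W|}$.

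Next I would invoke Theorem~\ref{Correspondence}: for every fixed $k$, a set $W\subseteq V(G)$ of size $k$ resolves some $H\in\super(G)$ if and only if there is a $W$-resolved embedding of $G$ in $P_{D+1}^{\boxtimes,k}$. Taking the minimum cardinality over all such $W$ on both sides of this equivalence yields exactly the stated formula for $\tau(G)$.

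The only point that deserves a sanity check is that the same path $P_{D+1}$, with $D=\mbox{diam}(G)$, works uniformly across all candidate spanning supergraphs $H$. This is immediate because adding edges cannot increase distances, so $\mbox{diam}(H)\leq\mbox{diam}(G)=D$ for every $H\in\super(G)$; thus Theorem~\ref{Correspondence} (which is phrased in terms of $P_{D+1}$) applies to every such $H$ without modification. I do not anticipate a genuine obstacle here: the corollary is essentially a repackaging of Theorem~\ref{Correspondence} through the definition of $\tau(G)$, and the only step that is not purely notational is this uniform-diameter remark.
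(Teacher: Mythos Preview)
Your proposal is correct and matches the paper's treatment: the corollary is stated there as an immediate consequence of Theorem~\ref{Correspondence}, with no further argument given. Your unwinding of the definition of $\tau(G)$ together with the observation that $\mathrm{diam}(H)\le D$ for all $H\in\super(G)$ is exactly what is needed to pass from the theorem to the corollary.
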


\section{A geometric interpretation for the threshold strong dimension}  \label{geometricinterpretation}

As described in Section \ref{preliminaries}, a characterization for the threshold dimension of a graph with a geometric flavour was established in  \cite{MolMurphyOellermann2019}. In this section we establish a characterization for the threshold strong dimension of a graph that has a geometric flavour and builds on the geometric-type characterization of the threshold dimension given in \cite{MolMurphyOellermann2019}. We  also demonstrate that the threshold strong dimension of a graph  may not equal the threshold dimension.

\subsection{A geometric characterization for the threshold strong dimension}

We show next that with one additional condition the $W$-resolved embeddings described in Section 2, give rise to a geometric interpretation of the threshold strong dimension. We begin with a useful lemma.

\begin{lemma} \label{equality}
Let $G$ be a connected graph with diameter $D$ and let $W=\{w_1, w_2, \ldots, w_k\}$ be a set of vertices of $G$.  If $\varphi(G)$ is a $W$-resolved embedding of $G$ in $P_{D+1}^{\boxtimes, k}$  and $x \in V(G)$, then $$d_{\varphi(G)}(\varphi(x),\varphi(w_i))= d_{P_{D+1}^{\boxtimes, k}}(\varphi(x),\varphi(w_i)).$$
\end{lemma}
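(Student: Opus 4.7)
The plan is to establish the two inequalities separately: that $d_{P_{D+1}^{\boxtimes, k}}(\varphi(x),\varphi(w_i)) \le d_{\varphi(G)}(\varphi(x),\varphi(w_i))$ and conversely.

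First, I would unpack the definition of a $W$-resolved embedding to identify the $i$th coordinates of $\varphi(x)$ and $\varphi(w_i)$. By definition, the $i$th coordinate of $\varphi(x)$ equals $d_{\varphi(G)}(\varphi(x),\varphi(w_i))$, while the $i$th coordinate of $\varphi(w_i)$ equals $d_{\varphi(G)}(\varphi(w_i),\varphi(w_i)) = 0$. Applying Fact \ref{distanceinstrongproducts} then gives
\[
d_{P_{D+1}^{\boxtimes, k}}(\varphi(x),\varphi(w_i)) = \max_{1 \le j \le k} |\varphi(x)_j - \varphi(w_i)_j| \;\ge\; |\varphi(x)_i - \varphi(w_i)_i| = d_{\varphi(G)}(\varphi(x),\varphi(w_i)),
\]
which yields one inequality essentially for free from the definition together with the standard distance formula in a strong product of paths.

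For the reverse inequality, I would invoke the fact that $\varphi(G) = P_{D+1}^{\boxtimes,k}[\varphi(V(G))]$ is an induced subgraph of $P_{D+1}^{\boxtimes,k}$, so any $\varphi(x)$--$\varphi(w_i)$ path in $\varphi(G)$ is also such a path in the ambient strong product. Hence shortest-path distances in the ambient graph cannot exceed shortest-path distances in the subgraph, giving $d_{P_{D+1}^{\boxtimes, k}}(\varphi(x),\varphi(w_i)) \le d_{\varphi(G)}(\varphi(x),\varphi(w_i))$.

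Combining the two bounds yields the claimed equality. I do not anticipate any real obstacle here: the content of the lemma is that the coordinate labels assigned by a $W$-resolved embedding, which a priori only record distances within $\varphi(G)$, automatically coincide with the corresponding distances in the host strong product. The proof amounts to isolating the $i$th coordinate (which carries the relevant $\varphi(G)$-distance against a zero) and using that taking an induced subgraph can only increase distances.
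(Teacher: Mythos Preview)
Your proof is correct and follows essentially the same approach as the paper: both arguments isolate the $i$th coordinate via the definition of a $W$-resolved embedding together with Fact~\ref{distanceinstrongproducts} to obtain $d_{P_{D+1}^{\boxtimes,k}}(\varphi(x),\varphi(w_i)) \ge d_{\varphi(G)}(\varphi(x),\varphi(w_i))$, and then use that $\varphi(G)$ is a subgraph of the host to get the reverse inequality.
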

\begin{proof}
Since $\varphi(G)$ is a $W$-resolved embedding of $G$, we know, by definition, that
\[
\varphi(x)=\left(d_{\varphi(G)}(\varphi(x),\varphi(w_1)),
\dots,d_{\varphi(G)}(\varphi(x),\varphi(w_k))\right)
\]
and, in particular, 

\[
\varphi(w_j)=\left(d_{\varphi(G)}(\varphi(w_j),\varphi(w_1)), \ldots ,d_{\varphi(G)}(\varphi(w_j),\varphi(w_j)),
\dots,d_{\varphi(G)}(\varphi(w_j),\varphi(w_k))\right).
\]
By Fact \ref{distanceinstrongproducts}, 

\begin{align*}
d_{P_{D+1}^{\boxtimes, k}}(\varphi(w_j), \varphi(x))& = \mathrm{max}\{|d_{\varphi(G)}(\varphi(w_i),\varphi(x))- d_{\varphi(G)}(\varphi(w_i),\varphi(w_j))|\colon\ 1 \leq i \leq k\}\\
& \ge |d_{\varphi(G)}(\varphi(w_j),\varphi(x))- d_{\varphi(G)}(\varphi(w_j),\varphi(w_j))|\\
&=d_{\varphi(G)}(\varphi(w_j),\varphi(x)).
\end{align*}

\noindent Since $\varphi(G)$ is a subgraph of $P_{D+1}^{\boxtimes, k}$,
\[d_{P_{D+1}^{\boxtimes, k}}(\varphi(w_j), \varphi(x)) \le d_{\varphi(G)}(\varphi(w_j), \varphi(x)).\]

\noindent The result now follows.
\end{proof}

\begin{theorem}\label{strong_resolving_characterization}
Let $G$ be a connected graph of diameter $D$, and let $W=\{w_1,w_2,\dots,$ $w_k\}\subseteq V(G)$.  Then $W$ is a strong resolving set for some graph $H\in \super(G)$ if and only if there is a $W$-resolved embedding $\varphi(G)$ of $G$ in $P_{D+1}^{\boxtimes,k}$ such that $\varphi(G)$ is an isometric subgraph of $P_{D+1}^{\boxtimes, k}$.
\end{theorem}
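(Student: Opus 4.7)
The plan is to prove the biconditional in two directions by reducing the strong-resolving condition, via its defining triangle equality $d(u,w)=d(u,v)+d(v,w)$, to the equality case of the coordinate-wise $\max$ formula for distance in $P_{D+1}^{\boxtimes,k}$ given by Fact \ref{distanceinstrongproducts}.

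For the forward direction, I would start with $H\in\super(G)$ in which $W$ is a strong resolving set. The distance-vector map $\varphi(x)=(d_H(x,w_1),\dots,d_H(x,w_k))$ is a $W$-resolved embedding of $G$ in $P_{D+1}^{\boxtimes,k}$, as in Theorem \ref{Correspondence}. To see isometry, fix $x,y\in V(G)$ and note the sandwich
$$d_{P_{D+1}^{\boxtimes,k}}(\varphi(x),\varphi(y))\;\le\;d_{\varphi(G)}(\varphi(x),\varphi(y))\;\le\;d_H(x,y),$$
where the first inequality is because $\varphi(G)$ is a subgraph of the strong product and the second is because $H$ is a spanning subgraph of $\varphi(G)$ (under the identification $x\leftrightarrow\varphi(x)$). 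I would then close the sandwich by rewriting the leftmost quantity as $\max_j|d_H(x,w_j)-d_H(y,w_j)|$ via Fact \ref{distanceinstrongproducts}, and using the strong resolving hypothesis to produce an index $j$ at which this absolute difference equals $d_H(x,y)$.

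For the converse, I would let $\varphi$ be any $W$-resolved embedding with $\varphi(G)$ isometric in $P_{D+1}^{\boxtimes,k}$, and take $H\in\super(G)$ to be the graph on $V(G)$ whose edges are inherited from $\varphi(G)$, so that $H\cong\varphi(G)$. Given $x,y\in V(G)$, the $W$-resolved property identifies the $i$th coordinate of $\varphi(x)$ with $d_H(x,w_i)$, and combining the isometric hypothesis with Fact \ref{distanceinstrongproducts} gives
$$d_H(x,y)=\max_{1\le i\le k}\bigl|d_H(x,w_i)-d_H(y,w_i)\bigr|.$$
Any index $j$ attaining this maximum supplies the triangle equality $d_H(x,w_j)=d_H(x,y)+d_H(y,w_j)$ (or the symmetric one), placing whichever of $x,y$ is closer to $w_j$ on a shortest path from $w_j$ to the other; hence $w_j$ strongly resolves $x,y$ in $H$.

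The main obstacle is keeping track of three a priori different metric spaces — the strong product $P_{D+1}^{\boxtimes,k}$, its induced subgraph $\varphi(G)$, and the abstract graph $H$ — and reconciling how their distances interact. Lemma \ref{equality} already handles pairs $(x,w_i)$ with $w_i\in W$, but the essential new content here is that the isometric hypothesis extends this equality of distances to \emph{all} pairs, and that this extension is precisely what upgrades a resolving set (Theorem \ref{Correspondence}) to a strong resolving set.
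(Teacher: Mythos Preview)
Your proposal is correct and follows essentially the same argument as the paper's proof: both directions hinge on the identity $d_{P_{D+1}^{\boxtimes,k}}(\varphi(x),\varphi(y))=\max_j|d_H(x,w_j)-d_H(y,w_j)|$ from Fact~\ref{distanceinstrongproducts}, with the forward direction closing the sandwich $d_P\le d_{\varphi(G)}\le d_H$ by producing an index $j$ at which the strong-resolving triangle equality holds, and the converse reading off that same equality from the index attaining the max. Your presentation is in fact slightly more streamlined than the paper's, which separately invokes Lemma~\ref{equality} for pairs involving a vertex of $W$ before handling general pairs; your sandwich argument absorbs that step.
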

\begin{proof}
Suppose $W$ is a strong resolving set for some graph $H\in \super(G)$. Since $W$ is a strong resolving set for $H$, it is also a resolving set for $H$. Since the diameter$(H) \leq $ diameter$(G)=D$, we know from Theorem \ref{Correspondence} that there is a $W$-resolved embedding $\varphi(H)$ of $H$ in $P_{D+1}^{\boxtimes, k}$ such that if $v \in V(H)$, then  
\begin{align*}
\varphi(v)&=(d_{\varphi(H)}(\varphi(v),\varphi(w_1)), d_{\varphi(H)}(\varphi(v),\varphi(w_2)), \ldots, d_{\varphi(H)}(\varphi(v),\varphi(w_k)))\\
&=(d_H(v,w_1), d_H(v,w_2), \ldots, d_H(v,w_k)).
\end{align*}

\bigskip

By Lemma \ref{equality} the $j^{th}$ coordinate of $\varphi(v)$ is the distance from $\varphi(v)$ to $\varphi(w_j)$ in $P_{D+1}^{\boxtimes, k}$. So $H$, when viewed as a subgraph of $P_{D+1}^{\boxtimes, k}$,  preserves the distances between every vertex $v$ of $H$ and every vertex $w_j \in W$. 

Let $a,b$ be two vertices of $V(H)-W$. We now show that the distance between $a, b$ in (the embedding of) $H$  equals the distance between  $a$ and $b$ in $P_{D+1}^{\boxtimes, k}$.  Since $\varphi(H)$ is an embedding of $H$ in $P_{D+1}^{\boxtimes, k}$, 
\[d_H(a,b) \ge d_{\varphi(H)}(\varphi(a), \varphi(b)) \ge d_{P_{D+1}^{\boxtimes, k}}(\varphi(a), \varphi(b)).\]

Since $W$ strongly resolves $H$, there is some $w_j \in W$ such that either the interval between $a$ and $w_j$ in $H$ contains $b$ or the interval between $b$ and $w_j$ in $H$ contains $a$. We may assume the former occurs. We have already observed that $d_H(w_j,a) = d_{P_{D+1}^{\boxtimes, k}}(\varphi(w_j), \varphi(a))$ and $d_H((w_j,b) = d_{P_{D+1}^{\boxtimes, k}}(\varphi(w_j), \varphi(b))$. Since 

\begin{align*}
d_H(a,b)&= d_H(w_j,a)-d_H(w_j,b) \\
&=d_{P_{D+1}^{\boxtimes, k}}(\varphi(w_j), \varphi(a))-d_{P_{D+1}^{\boxtimes, k}}(\varphi(w_j), \varphi(b))\\
&\le d_{P_{D+1}^{\boxtimes, k}}(\varphi(a), \varphi(b))
\end{align*}
we see that $H$, when viewed as a subgraph of $P_{D+1}^{\boxtimes, k}$, preserves distances between every pair of vertices $a,b \in V(H)-W$. So $H$ is an isometric subgraph of $P_{D+1}^{\boxtimes, k}$.

For the converse suppose that there is a $W$-resolved embedding $\varphi(G)$ of $G$ in $P_{D+1}^{\boxtimes,k}$ such that $\varphi(G)$ is an isometric subgraph of $P_{D+1}^{\boxtimes, k}$. From Theorem 2.4,  $W$ is a resolving set. We show that $W$ is in fact,  a strong resolving set for $\varphi(G)$.

Again,  let $a,b$ be two vertices of $V(H)-W$.
\[
\varphi(a)=\left(d_{\varphi(G)}(\varphi(a),\varphi(w_1)),
\dots,d_{\varphi(G)}(\varphi(a),\varphi(w_k))\right),
\]
and 
\[
\varphi(b)=\left(d_{\varphi(G)}(\varphi(b),\varphi(w_1)),
\dots,d_{\varphi(G)}(\varphi(b),\varphi(w_k))\right).
\]
Moreover, since $\varphi(G)$ is an isometric subgraph of $P_{D+1}^{\boxtimes, k}$ and by Fact \ref{distanceinstrongproducts}, we have

\begin{align*}
d_{\varphi(G)}(\varphi(a), \varphi(b))&=d_{P_{D+1}^{\boxtimes, k}}(\varphi(a), \varphi(b))\\
&=\mathrm{max}\{|d_{P_{D+1}^{\boxtimes, k}}(\varphi(a),\varphi(w_j)) - d_{P_{D+1}^{\boxtimes, k}}(\varphi(b),\varphi(w_j)) | \colon\ 1 \leq j \leq k\}\\
&=\mathrm{max}\{|d_{\varphi(G)}(\varphi(a),\varphi(w_j))-d_{\varphi(G)}(\varphi(b),\varphi(w_j)) | \colon\ 1 \leq j \leq k\}.
\end{align*}

\noindent Let $i \in \{1,2, \ldots k\}$ be such that 
\[d_{\varphi(G)}(\varphi(a), \varphi(b))=|d_{\varphi(G)}(\varphi(a),\varphi(w_i))-d_{\varphi(G)}(\varphi(b),\varphi(w_i)) | \]
and assume wlog that
\[d_{\varphi(G)}(\varphi(a),\varphi(w_i)) \ge d_{\varphi(G)}(\varphi(b)\varphi(w_i)). \]

Then 
\[d_{\varphi(G)}(\varphi(a),\varphi(w_i))=d_{\varphi(G)}(\varphi(a), \varphi(b)) + d_{\varphi(G)}(\varphi(b),\varphi(w_i)).\]

Thus by taking in $\varphi(G)$ a shortest path from $\varphi(a)$ to $\varphi(b)$ followed by a shortest path from $\varphi(b)$ to $\varphi(w_i)$ we obtain a shortest $\varphi(a)$--$\varphi(w_i)$ path in $\varphi(G)$ that contains $\varphi(b)$. Hence $\{\varphi(w_1), \varphi(w_2), \ldots, \varphi(w_k)\}$ is a strong resolving set for $\varphi(G)$.
Since $H =\varphi(G) \in \super(G)$, this completes the proof of the converse. 

\end{proof}

\noindent As a consequence of this theorem we have the following.

\begin{corollary}\label{ThresholdStrongEmbedding}
Let $G$ be a connected graph of diameter $D$. Then $\tau_s(G)$ is the minimum cardinality of a set $W \subseteq V(G)$ for which there is a $W$-resolved embedding $\varphi(G)$ of $G$ in $P^{\boxtimes,|W|}_{D+1}$  that is an isometric subgraph of $P^{\boxtimes,|W|}_{D+1}$.
\end{corollary}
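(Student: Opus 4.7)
The plan is to deduce this as an immediate consequence of Theorem \ref{strong_resolving_characterization}, since the corollary is nothing more than the minimization reformulation of the biconditional just established. By definition, $\tau_s(G)=\min\{\beta_s(H):H\in\super(G)\}$, and $\beta_s(H)$ is itself the minimum cardinality of a strong resolving set of $H$. Hence $\tau_s(G)$ equals the minimum value of $|W|$ taken over all pairs $(H,W)$ such that $H\in\super(G)$ and $W$ is a strong resolving set of $H$.

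Let $m$ denote the quantity on the right-hand side of the statement, namely the minimum of $|W|$ over all sets $W\subseteq V(G)$ that admit a $W$-resolved embedding of $G$ in $P_{D+1}^{\boxtimes,|W|}$ which is simultaneously an isometric subgraph of $P_{D+1}^{\boxtimes,|W|}$. To see $\tau_s(G)\le m$, I would take a set $W$ realizing the minimum $m$; by the backward direction of Theorem \ref{strong_resolving_characterization}, $W$ is a strong resolving set for some $H\in\super(G)$, so $\tau_s(G)\le\beta_s(H)\le|W|=m$. For the reverse inequality, let $H\in\super(G)$ satisfy $\beta_s(H)=\tau_s(G)$ and let $W$ be a strong basis of $H$. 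The forward direction of Theorem \ref{strong_resolving_characterization} then supplies a $W$-resolved isometric embedding of $G$ in $P_{D+1}^{\boxtimes,|W|}$, yielding $m\le|W|=\tau_s(G)$.

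I do not anticipate any real obstacle, since the substantive content was already absorbed by Theorem \ref{strong_resolving_characterization}. The only point worth flagging is that the ambient strong product is $P_{D+1}^{\boxtimes,|W|}$, where $D$ is the diameter of $G$ rather than of $H$; this is already accommodated in Theorem \ref{strong_resolving_characterization} via the inequality $\mathrm{diam}(H)\le\mathrm{diam}(G)$ for $H\in\super(G)$, so no separate handling is required here.
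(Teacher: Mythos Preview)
Your proposal is correct and matches the paper's approach: the paper gives no explicit proof at all, simply introducing the corollary with ``As a consequence of this theorem we have the following,'' so your careful unpacking of the two inequalities from Theorem~\ref{strong_resolving_characterization} is exactly the intended (and only) argument.
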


\subsection{Comparing the threshold strong dimension with the threshold dimension and the strong isometric dimension}
In this section we show that the threshold strong dimension does not equal either the threshold dimension or the strong isometric dimension.
\subsubsection{The threshold strong dimension and the threshold dimension are not equal} 
We show that the threshold strong dimension does not equal the threshold dimension  by exhibiting a specific graph whose threshold dimension is $2$,   but whose threshold strong dimension exceeds $2$.   

Let $G$ be a graph with metric dimension $2$, metric basis $W=\{w_1,  w_2\}$ and diameter $D$.   Then $G$ is not a path and hence $\tau(G)=2$.   It follows from Theorem \ref{Correspondence} and Corollary \ref{ThresholdEmbedding}  that $G$ has a $W$-resolved embedding in $P^{\boxtimes,2}_{D+1}$.   By Theorem 3.2 and Corollary 3.3,  $W=\{w_1,w_2\}$ is a strong basis of some graph $H\in \super(G)$ if and only if there is a $W$-resolved isometric embedding of $G$ in $P^{\boxtimes,2}_{D+1}$.

 \begin{example}\label{OExample}
Let $G$ be the graph shown in black in Figure \ref{fig:smallOExample} as an embedding in  $P^{\boxtimes,2}_{D+1}$.  By  Theorem \ref{Correspondence} the set $\{w_1,w_2$\} is a metric basis,  but not a strong basis, since for example, $c_5$ and $f_3$ are not strongly resolved by either $w_1$ or $w_2$.  
\end{example}

\bigskip
\begin{figure}[h]
\centering
\begin{tikzpicture}[scale=.8,every node/.style={draw,shape=circle,outer sep=2pt,inner sep=1pt,minimum
	size=.2cm}]
\foreach \x in {-3,...,3}
\foreach \y in {-3,...,3}
{
	\vertex[opacity=0.25]  (\x\y) at (\x,\y) {};
}
\foreach \x in {-3,...,2}
\foreach \y in {-3,...,2}
{
	\pgfmathtruncatemacro{\a}{\x+1}
	\pgfmathtruncatemacro{\b}{\y+1}
	\path[opacity=0.25]
	(\x\y) edge (\a\b)
	(\x\b) edge (\a\y);
}
\foreach \x in {-3,...,2}
\foreach \y in {-3,...,3}
{
	\pgfmathtruncatemacro{\a}{\x+1}
	\path[opacity=0.25]
	(\x\y) edge (\a\y)
	(\y\x) edge (\y\a);
}

\foreach \k in {1,...,3}{
	\foreach \j in {1,...,5}{
		\node[fill=black]  (1\j\k) at (-5+\j+\k,3-\j) {};
	}
}

\node[fill=black]  (161) at (2,-3) {};

\foreach \k in {3,...,5}{
	\node[fill=black]  (10\k) at (-5+\k,3) {};
}
\node[fill=black]  (114) at (0,2) {};

\foreach \k in {4, ..., 5}{	
	\node[fill=black]  (13\k) at (-2+\k,0) {};
}
\node[fill=black]  (144) at (3,-1) {};

%
\node[draw=none] at (-3.3, 1.75){\tiny{$w_1$}};

\node[draw=none] at (-1.6, 3.15){\tiny{$a_3$}};
\node[draw=none] at (-1.6, 2.15){\tiny{$a_2$}};
\node[draw=none] at (-2.3, 0.75){\tiny{$a_1$}};

\node[draw=none] at (-0.6, 3.15){\tiny{$b_4$}};
\node[draw=none] at (-0.6, 2.15){\tiny{$b_3$}};
\node[draw=none] at (-0.6, 1.15){\tiny{$b_2$}};
\node[draw=none] at (-1.3,-0.25){\tiny{$b_1$}};

\node[draw=none] at (0.4, 3.15){\tiny{$c_5$}};
\node[draw=none] at (0.4, 2.15){\tiny{$c_4$}};
\node[draw=none] at (0.4, 1.15){\tiny{$c_3$}};
\node[draw=none] at (0.4, 0.15){\tiny{$c_2$}};
\node[draw=none] at (-0.3,-1.25){\tiny{$c_1$}};

\node[draw=none] at (1.4, 0.15){\tiny{$d_3$}};
\node[draw=none] at (1.4, -0.85){\tiny{$d_2$}};
\node[draw=none] at (0.7, -2.25){\tiny{$d_1$}};

\node[draw=none] at (2.4, 0.15){\tiny{$e_3$}};
\node[draw=none] at (2.4, -0.85){\tiny{$e_2$}};
\node[draw=none] at (2.4, -1.85){\tiny{$e_1$}};
\node[draw=none] at (1.7, -3.25){\tiny{$w_2$}};

\node[draw=none] at (3.4, 0.15){\tiny{$f_3$}};
\node[draw=none] at (3.4, -0.85){\tiny{$f_2$}};
\node[draw=none] at (3.4, -1.85){\tiny{$f_1$}};

%
\foreach \k in {1,...,3}{
	\foreach \j in {1,...,4}{
		\pgfmathtruncatemacro{\next}{1+\j}
		\draw[thick] (1\j\k) to node[draw=none] {} 	(1\next\k);
	}
}
\draw[thick] (103) to node[draw=none] {} 	(113);
\draw[thick] (104) to node[draw=none] {} 	(114);
\draw[thick] (134) to node[draw=none] {} 	(144);
\draw[thick] (151) to node[draw=none] {} 	(161);

\draw[thick] (103) to node[draw=none] {}(105);
\draw[thick] (111) to node[draw=none] {}(114);	
\draw[thick] (121) to node[draw=none] {}(123);
\draw[thick] (131) to node[draw=none] {}(135);
\draw[thick] (141) to node[draw=none] {}(144);
\draw[thick] (151) to node[draw=none] {}(153);


\draw[thick] (103) to node[draw=none] {} 	(121);
\draw[thick] (104) to node[draw=none] {} 	(131);
\draw[thick] (105) to node[draw=none] {} 	(141);
\draw[thick] (133) to node[draw=none] {} 	(151);
\draw[thick] (134) to node[draw=none] {} 	(161);
\draw[thick] (135) to node[draw=none] {} 	(153);


\draw[thick] (111) to node[draw=none] {} 	(103);
\draw[thick] (112) to node[draw=none] {} 	(104);
\draw[thick] (121) to node[draw=none] {} 	(105);
\draw[thick] (122) to node[draw=none] {} 	(114);	
\draw[thick] (131) to node[draw=none] {} 	(123);
\draw[thick] (141) to node[draw=none] {} 	(133);
\draw[thick] (142) to node[draw=none] {} 	(134);
\draw[thick] (151) to node[draw=none] {} 	(135);
\draw[thick] (152) to node[draw=none] {} 	(144);
\draw[thick] (161) to node[draw=none] {} 	(153);

\end{tikzpicture}
\caption{Example 3.1}
\label{fig:smallOExample}
\end{figure}
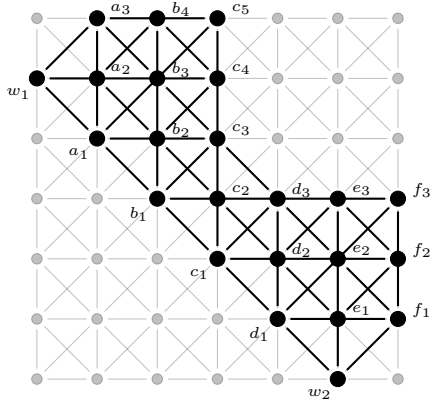


We now show that there is no  $H \in \super(G)$ such that $\beta_s(H)=2$. We begin by establishing some useful lemmas. 
For the first of these, we will assume that the vertices of $P_{D+1}^{\boxtimes, 2}$ have been labeled as in Theorem \ref{Correspondence} and that this graph has been drawn in $\mathbb{R}^2$ so that its vertices are positioned at the points of $\mathbb{R}^2$ that correspond to its vertex labels.

\begin{lemma} \label{boundaries_for_embedding}
Let $G$ be a connected graph with diameter $D$, metric dimension $2$ and metric basis $W=\{w_1,w_2\}$. Let $a=d_G(w_1,w_2)$ and let $\varphi$ be the $W$-resolved embedding described in Theorem \ref{Correspondence}. Then $\varphi(G)$ is contained in the subgraph of $P_{D+1}^{\boxtimes, 2}$ bounded by the paths  
\begin{align*}
Q_1:& (0,a), (1,a+1), \ldots, (D-a, D),\\
Q_2:& (0,a),(1,a-1), \ldots, (a,0),\\
Q_3:&(a,0), (a+1,1), \ldots, (D, D-a),\\
Q_4: &(D-a,D), (D-a+1, D), \ldots, (D,D),~\mbox{and}\\
Q_5: &(D, D-a), (D,D-a+1), \ldots, (D,D)
\end{align*}
\end{lemma}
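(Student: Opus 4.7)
My plan is to leverage the explicit form of the $W$-resolved embedding $\varphi$ described in Theorem~\ref{Correspondence}, namely the map that sends each $x \in V(G)$ to the pair $\bigl(d_G(x, w_1),\, d_G(x, w_2)\bigr)$. The first step will be to translate the geometric claim into algebraic inequalities by reading off the lines on which the five paths lie. From the listed endpoints: $Q_2$ sits on $p + q = a$; $Q_1$ on $q - p = a$; $Q_3$ on $p - q = a$; and $Q_4, Q_5$ are segments of the boundary lines $q = D$ and $p = D$ of the host grid $\{0, \ldots, D\}^2$. A brief geometric check then confirms that these five paths enclose exactly the hexagonal region
\[
R := \bigl\{(p, q) \in \{0, \ldots, D\}^2 \;:\; p + q \ge a \text{ and } |p - q| \le a\bigr\},
\]
so the lemma reduces to verifying $\varphi(x) \in R$ for every $x \in V(G)$.

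From this point the argument is a short sequence of one-line metric facts that I would carry out in order. The inequality $p + q \ge a$ is the triangle inequality applied to the triple $w_1, x, w_2$ together with $d_G(w_1, w_2) = a$; the inequality $|p - q| \le a$ is the reverse triangle inequality on the same triple; and the grid containment $p, q \le D$ is immediate from $D$ being the diameter of $G$. I do not foresee any real obstacle: the only step that requires any care is the geometric bookkeeping, confirming that each $Q_i$ is oriented so that ``bounded by'' corresponds to the correct half-plane and that the intersection of these half-planes is indeed the hexagon $R$. After that, the conclusion is three standard metric inequalities.
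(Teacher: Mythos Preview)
Your proposal is correct and takes essentially the same approach as the paper: both arguments translate the bounded region into the inequalities $p+q\ge a$, $|p-q|\le a$, and $p,q\le D$, and then discharge these via the triangle inequality (and its reverse) together with the diameter bound. One trivial slip: the region enclosed by $Q_1,\dots,Q_5$ is a pentagon, not a hexagon, but this has no bearing on the argument.
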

\begin{proof}
First note that if $(x,y)$ is a vertex in  $\phi(G)$, then $x,y\le D$. To complete the proof of the lemma we will show that  $\varphi(G)$ contains no vertices with labels $(x,y)$ where either
$x+y <a$, or $y<x-a$,  or  $x<y-a$.
 Assume, to the contrary that,  $\varphi(v)=(x,y)$ is a vertex of $\varphi(G)$,  where either 
\begin{itemize}
\item $x+y=d_{\varphi(G)}(w_1,v) + d_{\varphi(G)}(v,w_2) < a = d_{\varphi(G)}(w_1,w_2)$,  or  
\item $y=d_{\varphi(G)}(v,w_2) < x-a = d_{\varphi(G)}(v,w_1) - d_{\varphi(G)}(w_1,w_2)$,  or
\item  $x=d_{\varphi(G)}(v,w_1) < y-a=  d_{\varphi(G)}(v,w_2) - d_{\varphi(G)}(w_1,w_2)$.
\end{itemize}  
In each case we see that  the triangle inequality of the distance metric in graphs is violated, thereby completing the proof.
 
\end{proof}

The subgraph of $P_{D+1}^{\boxtimes, 2}$ bounded by the paths $Q_1, Q_2, \ldots, Q_5$ of Lemma \ref{boundaries_for_embedding} will be referred to as the {\em feasible region}. For a vertex $x$ of a connected graph $G$ and an integer $i \ge 0$, let $N_i(x,G)$ or $N_i(x)$, if $G$ is clear from context, be the set of all vertices distance $i$ from $x$ in $G$.  Let $e(x)$ denote the {\em eccentricity} of $x$, i.e., the distance from $x$ to the furthest vertex from $x$ in $G$. For vertices $x$, $y$ in a connected graph a shortest $x$-$y$ path will be referred to as an $x$-$y$ {\em geodesic}. Using Lemma \ref{boundaries_for_embedding} we obtain new proofs (with a geometric flavour) for a  set of useful properties of  graphs with metric dimension $2$ that were established in \cite{Sudhakaraetal2009}.

\begin{lemma}\label{dim2Properties}
Let $G$ be a connected graph of metric dimension $2$ and let $W=\{w_1,w_2\}$ be a metric basis for $G$.    
Then

\begin{enumerate}
\item  The degree of $w_j$ is at most $3$ for $j=1,2$.
\item There is a unique shortest $w_1$-$w_2$ path in $G$,  and every vertex on that path has degree at most $5$. 
\item  The subgraph induced by $N_i(w_j)$ is  a union of paths and $|N_i(w_j)| \le 2i+1$ for $0 \le i \le e(w_j)$. 
\item For any $v\in N_i(w_j)$,  $v$ is adjacent to at most three vertices in $N_{i+1}(w_j)$,  for $0\le i \le e(w_j)-1$.  Similarly,  there are at most three vertices in $N_{i-1}(v_j)$ adjacent to $v$ for  $ 1 \le i \le e(w_j)$.  
\end{enumerate}
\end{lemma}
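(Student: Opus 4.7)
The plan is to derive all four statements as purely geometric consequences of Lemma~\ref{boundaries_for_embedding}. Fix the $W$-resolved embedding $\varphi$ of Theorem~\ref{Correspondence}, so that $\varphi(w_1)=(0,a)$, $\varphi(w_2)=(a,0)$, and every $\varphi(v)=(d_G(v,w_1),d_G(v,w_2))$ lies in the feasible region. Two observations drive everything. First, because $\varphi$ is an embedding, each neighbour in $G$ of a vertex $v$ must lie among the at most eight grid-neighbours of $\varphi(v)$ in $P_{D+1}^{\boxtimes,2}$ (Fact~\ref{distanceinstrongproducts}). Second, because $\varphi$ is $W$-resolved, distinct vertices of $G$ receive distinct labels, so counting candidate lattice points gives a genuine upper bound on degrees and cardinalities.

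For Part~1, any neighbour of $w_1$ has first coordinate $1$ and, by the triangle inequality, second coordinate in $\{a-1,a,a+1\}$, giving only three candidate points $(1,a-1)$, $(1,a)$, $(1,a+1)$; the symmetric argument handles $w_2$. For Part~2, a vertex $v$ on a shortest $w_1$-$w_2$ path satisfies $d(v,w_1)+d(v,w_2)=a$, so $\varphi(v)$ lies on $Q_2$; injectivity of $\varphi$ then admits at most one vertex at each point $(i,a-i)$ with $0\le i\le a$, so the $w_1$-$w_2$ geodesic is unique. For the degree bound on such a vertex $\varphi(v)=(i,a-i)$, among its eight grid-neighbours exactly three, namely $(i-1,a-i-1)$, $(i-1,a-i)$, and $(i,a-i-1)$, have coordinate sum strictly less than $a$ and so lie strictly below $Q_2$, outside the feasible region; the remaining five points are the only candidates for neighbours of $v$ in $G$.

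For Part~3, every vertex of $N_i(w_j)$ projects onto the vertical line $x=i$ inside the feasible region, and the bounding lines $y\ge a-i$, $y\ge i-a$, and $y\le i+a$ (together with $0\le y\le D$) leave at most $2i+1$ lattice points, giving $|N_i(w_j)|\le 2i+1$. Since two points $(i,y_1),(i,y_2)$ on the same vertical fiber are adjacent in $P_{D+1}^{\boxtimes,2}$ only when $|y_1-y_2|=1$, the induced subgraph $G[N_i(w_j)]$ is a subgraph of a path in the grid and hence a disjoint union of paths. Part~4 is immediate from the same picture: the vertex $\varphi(v)=(i,y)$ admits only the three grid-neighbours $(i+1,y-1)$, $(i+1,y)$, $(i+1,y+1)$ with first coordinate $i+1$, and symmetrically for $i-1$.

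Each of these arguments is essentially a one-line geometric observation, so the only real obstacle is boundary-case bookkeeping: one must check that the count $|N_i(w_j)|\le 2i+1$ remains valid when $i>a$ or when $i+a>D$ (so that the relevant $y$-interval is truncated by $Q_3$, $Q_4$, or $Q_5$ rather than by $Q_2$ or $Q_1$), and in Part~2 one should verify that at the endpoints $v\in\{w_1,w_2\}$ the ``five remaining candidates'' collapse back to the three allowed by Part~1, so that the uniform bound of five on vertices of $Q_2$ is consistent throughout.
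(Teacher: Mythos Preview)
Your proposal is correct and follows essentially the same route as the paper's own proof: both arguments fix the $W$-resolved embedding of Lemma~\ref{boundaries_for_embedding}, observe that $\varphi(w_1)=(0,a)$ and $\varphi(w_2)=(a,0)$, and then read off each of the four properties by counting which lattice neighbours of a given point $\varphi(v)$ can lie in the feasible region. The only cosmetic differences are that the paper phrases Part~2 by exhibiting the path $(0,a),(1,a-1),\ldots,(a,0)$ directly and noting that its vertices must be images of a geodesic (rather than your converse formulation via $d(v,w_1)+d(v,w_2)=a$), and that the paper does not call out the boundary bookkeeping you flag at the end---those truncations only shrink the relevant counts, so the stated inequalities hold \emph{a fortiori}.
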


\begin{proof} Let $G$ and $W$ be as stated and let $\varphi$ be a $W$-resolved embedding of $G$ into $P^{\boxtimes,2}_{D+1}$,  where $D=\mbox{diam}(G)$.   Let $\varphi$ be the $W$-resolved embedding of $G$ described in Lemma \ref{boundaries_for_embedding}. Then the coordinates of $\varphi(w_1)$ and $\varphi(w_2)$ are $(0,a)$ and $(a,0)$, respectively,  where $a=d_{\varphi(G)}(\varphi(w_1), \varphi(w_2))=d_G(w_1,w_2)$.    Property 1 now follows immediately from the fact that in $P^{\boxtimes,2}_n$,   the vertex $\phi(w_1)= (0,a)$ is adjacent to exactly three vertices in the feasible region,  namely $(1,a+1)$,  $(1,a)$,  and $(1, a-1)$.   A similar analysis holds  for $\phi(w_2)= (a,0)$.  To see that there is a unique $w_1$-$w_2$ geodesic in $G$, we need only note that the path
$$(0,a),(1, a-1),(2, a-2), \cdots, (a,0)$$
is the unique $\varphi(w_1)$-$\varphi(w_2)$ geodesic in $\varphi(G)$: Indeed,  because $\varphi$ is $W$-resolving,  the vertices in this path must be the images of vertices which induce a $w_1$-$w_2$ geodesic in $G$.  Furthermore,  for each $v$ on this path,  exactly five of its neighbours in $P^{\boxtimes,2}_{D+1}$,   are in the feasible  region.   This establishes Property 2.  Since the vertices of
$\varphi(N_k(w_1)) $  are distance $k$ from $\varphi(w_1)$ in $P^{\boxtimes,2}_{D+1}$  they form, by Lemma \ref{boundaries_for_embedding} a subset of $\{ (k,a-k), (k, a-k+1), \ldots , (k,a+k)   \}$.   A similar situation holds for $w_2$. Since  $\{ (k,a-k), (k, a-k+1), \ldots , (k,a+k)   \}$ induces a path in $P^{\boxtimes,2}_{D+1}$,  Property 3 follows.   Lastly,
note that  a vertex $v=(x,y)$ in $\varphi(N_i(w_j))$ for $j=1,2$ has at most three neighbours in  $\varphi(N_{i-1}(w_j))$,  namely $(x-1,y+1)$, $(x-1, y)$,  and $(x-1, y-1)$,  and at most three neighbours in $\varphi(N_{k+1}(w_j))$,  namely $(x+1, y-1)$,  $(x+1, y)$,  and $(x+1, y+1)$,  thereby establishing Property 4.

\end{proof}

If $G$ is a graph with metric dimension 2 and basis $W=\{w_1,w_2\}$, then, by Lemma \ref{dim2Properties} there is a unique $w_1$-$w_2$ geodesic in $G$ to which we will refer as the {\it  diagonal}.  Every vertex of $G$ on this path is called a {\it diagonal vertex}, and any other vertex is referred to as a {\it non-diagonal}.  We are now prepared to show that when $G$ is the graph of Example \ref{OExample}, $\tau_s(G) >2$.

\begin{theorem} \label{threshold_not_strongthreshold}
Let $G$ be the graph of Example \ref{OExample}. Then $\tau_s(G) >2$. 
\end{theorem}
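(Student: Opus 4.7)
Since $G$ is not a path, $\tau_s(G)\geq 2$, so it suffices to rule out $\tau_s(G)=2$. The plan is to argue by contradiction: assume there exists $H\in\mathcal{U}(G)$ with strong basis $W=\{w_1',w_2'\}$ of size $2$. By Corollary \ref{ThresholdStrongEmbedding}, together with $\mathrm{diam}(H)\leq D:=\mathrm{diam}(G)$, one obtains a $W$-resolved embedding $\varphi$ of $G$ in $P_{D+1}^{\boxtimes,2}$ such that $\varphi(G)=H$ is an isometric subgraph of $P_{D+1}^{\boxtimes,2}$. Since $\beta_s(H)=2$ forces $\beta(H)=2$, Lemmas \ref{boundaries_for_embedding} and \ref{dim2Properties} apply to $H$ with metric basis $W$.

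First, I would use those structural constraints to drastically narrow the candidates for $\{w_1',w_2'\}$. The degree bounds of Lemma \ref{dim2Properties} transfer from $H$ to its spanning subgraph $G$, and the uniqueness of the $w_1'$--$w_2'$ geodesic together with the path-union structure of the distance levels $N_i(w_j')$ further restricts the choice. A direct inspection of the ``O-shape'' graph $G$ in Figure \ref{fig:smallOExample} should then show that, up to automorphisms of $G$, the only viable pair is the labelled pair $\{w_1,w_2\}$. Fixing this basis, the $W$-resolved embedding sends each vertex $v$ to $(d_H(v,w_1),d_H(v,w_2))$, and by Lemma \ref{boundaries_for_embedding} its image lies in the feasible region of $P_{D+1}^{\boxtimes,2}$.

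Next, I would derive a distance contradiction from the pair $c_5,f_3$, which the paper has already observed are not strongly resolved in $G$ itself by $\{w_1,w_2\}$. Their coordinates in the embedding differ by only a small amount in each component, so their $\ell^\infty$ distance is small; however, achieving that small distance as $d_H(c_5,f_3)$ would require adding ``shortcut'' edges across the hole of the O. I expect to show that each such candidate shortcut edge either fails the $\ell^\infty$-adjacency test in $P_{D+1}^{\boxtimes,2}$ (its endpoints are placed at distance greater than $1$ in some coordinate), or else its addition violates the uniqueness of the $w_1$--$w_2$ geodesic or the degree bound at an interior vertex of that geodesic, contradicting Lemma \ref{dim2Properties}. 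This contradiction yields $\tau_s(G)>2$.

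The main obstacle is the case analysis in the first step. While the degree bounds quickly rule out most vertices of $G$ as candidates for $\{w_1',w_2'\}$, one must carefully verify that each remaining symmetric copy of $\{w_1,w_2\}$ leads to the same shortcut obstruction, and that no choice of added edges (to pass from $G$ to $H$) can simultaneously produce the required isometric embedding, preserve a $2$-element strong basis, and respect the structural constraints of Lemma \ref{dim2Properties}.
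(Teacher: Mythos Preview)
Your overall strategy matches the paper's: assume $H\in\mathcal U(G)$ with $\beta_s(H)=2$, argue that the basis must be $\{w_1,w_2\}$, and then derive a contradiction. The identification of the basis is also handled the same way, though the paper is more direct: by Lemma~\ref{dim2Properties}(1) and (3) the two basis vertices must have degree at most $3$ \emph{and} have an acyclic induced neighbourhood, and in $G$ only $w_1$ and $w_2$ meet both conditions---no appeal to automorphisms is needed.

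Where you diverge from the paper is in the contradiction step, and your route is noticeably harder. You propose to focus on the pair $c_5,f_3$ and analyse which ``shortcut'' edges could realise a small $d_H(c_5,f_3)$, checking each against the $\ell^\infty$-adjacency test or the degree/geodesic constraints. The difficulty is that adding an edge to $G$ changes the $H$-distances and hence the entire $W$-resolved embedding, so the positions against which you test adjacency are moving targets; untangling this circularity is exactly the case analysis you flag as the main obstacle.

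The paper sidesteps this completely with a rigidity argument: it shows that $H$ must equal $G$. Once $\{w_1,w_2\}$ is the basis, the $w_1$--$w_2$ diagonal in $H$ is $w_1a_1b_1c_1d_1w_2$, and by Lemma~\ref{dim2Properties}(1),(2) each of $w_1,w_2$ has degree at most $3$ and each of $a_1,b_1,c_1,d_1$ has degree at most $5$ in $H$. But these vertices already attain those degrees in $G$, so no new edge of $H$ can be incident with any diagonal vertex. This pins down the neighbours of the diagonal vertices at their positions in Figure~\ref{fig:smallOExample}, and from there the placement of every remaining vertex is forced, yielding $H=G$. Since $\{w_1,w_2\}$ does not strongly resolve $G$ (witnessed by $c_5,f_3$), this is the contradiction. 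Replacing your shortcut analysis with this cascading degree-saturation argument gives a much shorter and cleaner proof.
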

\begin{proof}
Assume to the contrary, that there is a $H \in \super(G)$ such that $\beta_s(H)=2$. Let $W=\{w_1,w_2\}$. We claim that  $W=\{w_1,w_2\}$ is the unique basis for $H$. By Lemma \ref{dim2Properties}(1) and (3) we know that if $H$ is a graph with metric dimension $2$, then the vertices in the basis have degree at most $3$ and the subgraphs induced by their neighbourhoods are acyclic. Hence the only candidates for basis vertices are $w_1$ and $w_2$. This proves our claim. 

Moreover, by definition of a $W$-resolved embedding, the distance in $H$ from any vertex $v$ to either $w_1$ or $w_2$  equals the distance from $v$ to $w_1$ or $w_2$ in $G$, respectively. In particular, $d_H(w_1,w_2)=d_G(w_1,w_2)=5$. Thus $w_1a_1b_1c_1d_1w_2$ is the $w_1$-$w_2$ diagonal, i.e., the unique $w_1$-$w_2$ geodesic in $H$.

By Lemma \ref{dim2Properties} (2) the degree in $H$ of each interior vertex on this diagonal is at most $5$. Hence the neighbourhood of each of the vertices $a_1,b_1,c_1,d_1$ in $H$ is the same as the respective neighbourhood in $G$. Thus, in a $W$-resolved embedding of $H$ in $P^{\boxtimes,2}_{7}$ the diagonal vertices of $H$ appear in exactly the positions shown in Figure \ref{fig:smallOExample}. Moreover, the neighbours of these vertices in $H$ are precisely the same as their neighbours in $G$, by Lemma \ref{dim2Properties} (1) and (2). Moreover, by  Lemma \ref{boundaries_for_embedding}, we see that in the $W$-resolved embedding of $H$ in $P^{\boxtimes,2}_{7}$ these neighbours of the diagonal vertices necessarily appear in the positions shown in Figure \ref{fig:smallOExample}. The positions of the remaining vertices of $H$ in the $W$-resolved embedding of $H$ in $P^{\boxtimes,2}_{7}$ are now forced to coincide with their positions shown in Figure \ref{fig:smallOExample}. Hence $H$ necessarily has the same edges as $G$. However, then $W$ does not strongly resolve $H$, a contradiction.

\end{proof}

Indeed we believe that the difference $\tau_s(G) -\tau(G)$ can be arbitrarily large. To this end let $G_1$ be the graph shown in Figure \ref{fig:smallOExample}. Let $G_2$ be the graph obtained from two copies $G^1_1$ and $G^2_1$ of the graph $G_1$  by identifying the vertices corresponding to $w_2$ and $f_1$ in $G^1_1$ with the vertices $w_1$ and $a_3$, respectively in $G^2_1$ and adding the edge between the vertex $f_2$ in $G^1_1$ and the vertex $b_4$ from $G^2_1$, as well as the edge between the vertex $e_1$ in $G^1_1$ and the vertex $a_2$ in $G^2_1$.  The  graph $G_2$ is shown in Figure \ref{fig:OExample}. In general for $n \ge 2$, let $G_n$ be the graph obtained from $n$ copies $G^1_1,G^2_1, \ldots, G^n_1$ of $G_1$ by identifying for each $1 \le i <n$ the vertices labeled $w_2$ and $f_1$ in $G^{i}_1$ with the vertices labeled $w_1$ and $a_3$ in $G^{i+1}_1$ and then adding the edge between the vertex $f_2$ in $G^{i}_1$ and the vertex $b_4$ in $G^{i+1}_1$, as well as adding the edge between the vertex $e_1$ in $G^{i}_1$ and the vertex $a_2$ in $G^{i+1}_1$. It is readily seen that $\tau(G_n)=2$. Using an exhaustive computer search it was shown that $\tau_s(G_2) =4$. We conjecture the following:

\begin{conjecture}\label{gap}
For every positive integer $k$,  there is a positive integers $n$ such that 
\[\tau_s(G_{n}) \ge \tau (G_n) + k.\]
\end{conjecture}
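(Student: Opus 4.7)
The conjecture naturally splits into (1) showing $\tau(G_n)=2$ for all $n\geq 1$, and (2) showing $\tau_s(G_n)\to\infty$ as $n\to\infty$. Together these give an unbounded gap. The authors state that (1) is readily seen, so I regard (2) as carrying the substance of the conjecture, and I would attack it through the isometric-embedding characterization of Corollary \ref{ThresholdStrongEmbedding}.

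For (1), I would verify the claim by an explicit construction: iterate a $2$-resolving witness of $\tau(G_1)=2$ along the chain, using the sentinels $w_1$ in $G_1^1$ and $w_2$ in $G_1^n$, together with the gluing edges $f_2 b_4$ and $e_1 a_2$ already present in $G_n$, and add further chords if needed so that the distance pairs to the two sentinels separate every vertex pair of $G_n$.

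For (2), I would suppose $\tau_s(G_n)=k$ and, by Corollary \ref{ThresholdStrongEmbedding}, pick a $W$-resolved isometric embedding $\varphi\colon V(G_n)\to V(P_{D+1}^{\boxtimes, k})$ with $W=\{w_1,\dots,w_k\}$ and $D=\operatorname{diam}(G_n)=O(n)$. For each copy $G_1^i$, the restriction $\varphi|_{V(G_1^i)}$ is itself a $W$-resolved isometric embedding of $G_1^i\cong G_1$ into the same product. Re-examining the proof of Theorem \ref{threshold_not_strongthreshold} shows that no pair of coordinates of $W$ alone can support such an embedding of $G_1$; thus each copy $G_1^i$ forces at least three coordinates of $W$ to be \emph{locally relevant} to it in an appropriate sense (for instance, the projection $\pi_j\circ\varphi$ restricted to $V(G_1^i)$ has spread comparable to $\operatorname{diam}(G_1)$). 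A counting argument should then give $k=\Omega(n)$.

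The main obstacle is exactly the counting step: one must quantify ``local relevance'' so that (i) every copy $G_1^i$ requires a constant number of locally-relevant basis vertices, while (ii) each basis vertex is locally relevant to only $O(1)$ copies. Because every coordinate of $\varphi$ takes values in $\{0,1,\dots,D\}$ and each copy $G_1^i$ uses at most $\operatorname{diam}(G_1)+1$ consecutive values, the naive bound permits a single basis vertex to help up to $\Theta(n/\operatorname{diam}(G_1))$ copies at once; overcoming this requires exploiting the finer geometry of $G_1$'s ``non-embeddability'' revealed in Theorem \ref{threshold_not_strongthreshold}, most likely by ruling out specific configurations of three locally-relevant coordinates that could be shared between distant copies. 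This refinement is almost certainly the reason the statement is only conjectured here, consistent with the fact that the value $\tau_s(G_2)=4$ was itself obtained only by exhaustive computer search.
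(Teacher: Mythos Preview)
The statement is a \emph{conjecture} in the paper; no proof is given. The only evidence offered is that $\tau(G_n)=2$ is ``readily seen'' and that $\tau_s(G_2)=4$ was verified by exhaustive computer search. So there is no argument in the paper to compare your proposal against.

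Your overall plan is a natural one, and you are right that the counting step is a serious obstacle. However, there are gaps earlier in the outline that you do not flag.

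First, the assertion that $\varphi|_{V(G_1^i)}$ is ``itself a $W$-resolved isometric embedding of $G_1^i$'' is not justified and, as stated, not even well-formed. A $W$-resolved embedding of $G_1^i$ requires $W\subseteq V(G_1^i)$ by definition, whereas your $W$ lives in $V(G_n)$ and need not meet $V(G_1^i)$ at all. Even waiving that, isometry of $\varphi$ on $G_n$ gives $d_{P_{D+1}^{\boxtimes,k}}(\varphi(a),\varphi(b))=d_{G_n}(a,b)$ for $a,b\in V(G_1^i)$; but shortest $a$--$b$ paths in $G_n$ may leave $G_1^i$ through the gluing vertices and edges, so the subgraph of $\varphi(G_n)$ induced on $\varphi(V(G_1^i))$ need not be isometric in $P_{D+1}^{\boxtimes,k}$.

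Second, the inference ``re-examining the proof of Theorem~\ref{threshold_not_strongthreshold} shows that no pair of coordinates of $W$ alone can support such an embedding of $G_1$'' is a substantial leap. That proof works by first arguing that in any $H\in\super(G_1)$ with $\beta_s(H)=2$ the basis is forced to be the specific pair $\{w_1,w_2\}$ of $G_1$ (via degree and neighbourhood constraints from Lemma~\ref{dim2Properties}), and then that the embedding is rigid. Projecting a $k$-dimensional embedding of $G_n$ onto two coordinates is a very different situation: the two coordinates need not correspond to distances from any vertices of $G_1^i$, and the rigidity argument does not transfer.

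So the bridge from ``$\tau_s(G_1)>2$'' to ``each copy requires three locally-relevant coordinates'' is itself an open step, on top of the counting step you already identify. Your diagnosis that this is why the statement remains a conjecture is correct, but the difficulty starts earlier than you suggest.
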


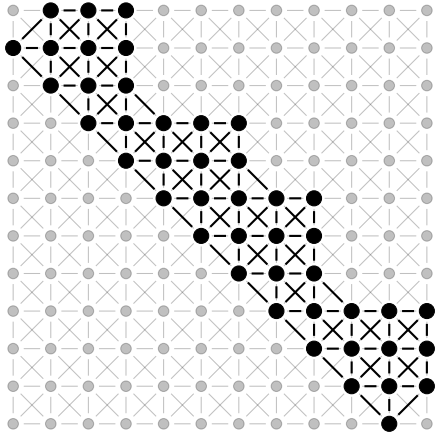
\begin{figure}[h]
\centering
\begin{tikzpicture}[scale=.5,every node/.style={draw,shape=circle,outer sep=2pt,inner sep=1pt,minimum
	size=.2cm}]
\foreach \x in {-5,...,6}
\foreach \y in {-5,...,6}
{
	\vertex[opacity=0.25]  (\x\y) at (\x,\y) {};
}
\foreach \x in {-5,...,5}
\foreach \y in {-5,...,5}
{
	\pgfmathtruncatemacro{\a}{\x+1}
	\pgfmathtruncatemacro{\b}{\y+1}
	\path[opacity=0.25]
	(\x\y) edge (\a\b)
	(\x\b) edge (\a\y);
}
\foreach \x in {-5,...,5}
\foreach \y in {-5,...,6}
{
	\pgfmathtruncatemacro{\a}{\x+1}
	\path[opacity=0.25]
	(\x\y) edge (\a\y)
	(\y\x) edge (\y\a);
}

\foreach \k in {1,...,3}{
	\foreach \j in {1,...,10}{
		\node[fill=black]  (1\j\k) at (-6+\j+\k-1,6-\j) {};
	}
}

\node[fill=black]  (1111) at (5,-5) {};

\foreach \k in {3,...,5}{
	\node[fill=black]  (10\k) at (-6+\k-1,6) {};
}
\node[fill=black]  (114) at (-2,5) {};

\foreach \k in {4, ..., 5}{	
	\node[fill=black]  (13\k) at (-3+\k-1,3) {};
}
\node[fill=black]  (144) at (1,2) {};

\foreach \k in {4, ..., 5}{	
	\node[fill=black]  (15\k) at (\k-2,1) {};
}
\node[fill=black]  (164) at (3,0) {};

\foreach \k in {4, ..., 5}{	
	\node[fill=black]  (18\k) at (1+\k,-2) {};
}
\node[fill=black]  (194) at (6,-3) {};


\foreach \k in {1,...,3}{
	\foreach \j in {1,...,9}{
		\pgfmathtruncatemacro{\next}{1+\j}
		\draw[thick] (1\j\k) to node[draw=none] {} 	(1\next\k);
	}
}
\draw[thick] (103) to node[draw=none] {} 	(113);
\draw[thick] (104) to node[draw=none] {} 	(114);
\foreach \j in {3,...,5}{
	\pgfmathtruncatemacro{\next}{1+\j}
	\draw[thick] (1\j4) to node[draw=none] {} 	(1\next4);
}
\draw[thick] (184) to node[draw=none] {} 	(194);
\foreach \k in {1,...,2}{
	\foreach \j in {1,...,10}{
		\pgfmathtruncatemacro{\nexxt}{1+\k}
		\draw[thick] (1\j\k) to node[draw=none] {} 	(1\j\nexxt);
	}
}

\foreach \k in {3,...,4}{
	\pgfmathtruncatemacro{\nexxt}{1+\k}
	\draw[thick] (10\k) to node[draw=none] {}(10\nexxt);	
	\draw[thick] (13\k) to node[draw=none] {}(13\nexxt);
	\draw[thick] (15\k) to node[draw=none] {}(15\nexxt);
	\draw[thick] (18\k) to node[draw=none] {}(18\nexxt);
}
\draw[thick] (113) to node[draw=none] {}(114);	
\draw[thick] (143) to node[draw=none] {}(144);
\draw[thick] (163) to node[draw=none] {}(164);
\draw[thick] (193) to node[draw=none] {}(194);

\foreach \k in {1,...,2}{
	\foreach \j in {2,...,10}{
		\pgfmathtruncatemacro{\nexxt}{1+\k}
		\pgfmathtruncatemacro{\bef}{\j-1}
		\draw[thick] (1\j\k) to node[draw=none] {} 	(1\bef\nexxt);
	}
}

\draw[thick] (112) to node[draw=none] {} 	(103);
\foreach \k in {3,...,4}{
	\pgfmathtruncatemacro{\nexxt}{1+\k}
	\draw[thick] (11\k) to node[draw=none] {} 	(10\nexxt);
	\draw[thick] (14\k) to node[draw=none] {} 	(13\nexxt);
	\draw[thick] (16\k) to node[draw=none] {} 	(15\nexxt);
	\draw[thick] (19\k) to node[draw=none] {} 	(18\nexxt);
}

\draw[thick] (123) to node[draw=none] {} 	(114);
\draw[thick] (153) to node[draw=none] {} 	(144);
\draw[thick] (173) to node[draw=none] {} 	(164);
\draw[thick] (1103) to node[draw=none] {} 	(194);

\foreach \k in {1,...,1}{
	\foreach \j in {1,...,10}{
		\pgfmathtruncatemacro{\nexxxt}{2+\k}
		\pgfmathtruncatemacro{\bef}{\j-1}
		\draw[thick] (1\j\k) to node[draw=none] {} 	(1\bef\nexxxt);
	}
}

\draw[thick] (112) to node[draw=none] {} 	(104);
\draw[thick] (113) to node[draw=none] {} 	(105);
\draw[thick] (122) to node[draw=none] {} 	(114);	
\draw[thick] (142) to node[draw=none] {} 	(134);
\draw[thick] (143) to node[draw=none] {} 	(135);
\draw[thick] (152) to node[draw=none] {} 	(144);
\draw[thick] (162) to node[draw=none] {} 	(154);
\draw[thick] (163) to node[draw=none] {} 	(155);
\draw[thick] (172) to node[draw=none] {} 	(164);
\draw[thick] (192) to node[draw=none] {} 	(184);
\draw[thick] (193) to node[draw=none] {} 	(185);
\draw[thick] (1102) to node[draw=none] {} 	(194);

\foreach \k in {1, ..., 3}{
	\draw[thick] (1111) to node[draw=none]{}(110\k);
}

\end{tikzpicture}
\caption{The graph $G_2$ shown in black with threshold dimension $2$ and  threshold strong dimension  $4$.}
 \label{fig:OExample}
\end{figure}

\subsubsection{The threshold strong dimension and the strong isometric dimension}
We showed in Corollary \ref{ThresholdStrongEmbedding} that, for a connected graph $G$,  $\tau_s(G)$ is the smallest cardinality of a set $W \subseteq V(G)$ for which there is a $W$-resolved embedding $\varphi(G)$ of $G$ in $P^{\boxtimes,|W|}$, for $P$ a path of sufficiently large order, such that $\varphi(G)$ is an isometric subgraph of $P^{\boxtimes,|W|}$. It is natural to ask if the threshold strong dimension of a graph $G$ has a relationship with the {\em strong isometric dimension} of $G$, denoted by $sdim(G)$, and defined as the smallest integer $n$ such that there is an isometric embedding of $G$ in $P^{\boxtimes,n}$ for some path $P$. Some results pertaining to the strong isometric dimension have been summarized, for example, in \cite{HammackImrichKlavzar2011}. In particular, Theorem 15.4 in \cite{HammackImrichKlavzar2011}, states that $sdim(K_n) =\lceil \log_2(n) \rceil$. However, $\beta_s(K_{n}) = n-1 =\tau_s(K_n)$.  On the other hand Theorem 15.4 in \cite{HammackImrichKlavzar2011} states that $sdim(C_n) = \lceil n/2 \rceil$ whereas $\tau_s(C_n) = 2$ as illustrated by the $\{w_1,w_2\}$-resolved embedding of $C_n$ in $P_{\lceil \frac{n+1}{2} \rceil}^{\boxtimes,2}$ shown in Figure \ref{cycle-embedding}. Thus $sdim(G)$ and $\tau_s(G)$ are distinct parameters.  In fact, there is no general order relationship between $sdim(G)$ and $\tau_s(G)$.

\begin{figure}[h]
\centering
	\begin{minipage}[c]{0.5\textwidth}
		\centering
		$n$ odd	\begin{tikzpicture}[scale=.7,every node/.style={draw,shape=circle,outer sep=2pt,inner sep=1pt,minimum
			size=.2cm}]
		\foreach \x in {0,...,5}
		\foreach \y in {0,...,5}
		{
			\vertex[opacity=0.25]  (\x\y) at (\x,\y) {};
		}
		\foreach \x in {0,...,2}
		\foreach \y in {2,...,4}
		{
			\pgfmathtruncatemacro{\a}{\x+1}
			\pgfmathtruncatemacro{\b}{\y+1}
			\path[opacity=0.25]
			(\x\y) edge (\a\b)
			(\x\b) edge (\a\y);
		}
		\foreach \x in {0,...,2}
		\foreach \y in {2,...,4}
		{
			\pgfmathtruncatemacro{\a}{\x+1}
			\pgfmathtruncatemacro{\b}{\y+1}
			\path[opacity=0.25]
			(\x\y) edge (\a\y)
			(\x\y) edge (\x\b);
		}
		\draw[opacity=0.25]	(32) -- (33)--(34)--(35);
		\draw[opacity=0.25]	(05) -- (15)--(25)--(35);

		\foreach \x in {0,...,2}
		\foreach \y in {0,...,1}
		{
			\pgfmathtruncatemacro{\a}{\x+1}
			\pgfmathtruncatemacro{\b}{\y+1}
			\path[opacity=0.25]
			(\x\y) edge (\a\y);
		}
		\draw[opacity=0.25]	(00) -- (01)--(10)--(11)--(20)--(21)--(30)--(31)--(20);
		\draw[opacity=0.25]	(00) -- (10)--(20)--(30);
		\draw[opacity=0.25]	(01) -- (11)--(21)--(31);
		\draw[opacity=0.25]	(00) -- (11);
		\draw[opacity=0.25]	(10) -- (21);
		
		\foreach \x in {4,...,5}
		\foreach \y in {2,...,4}
		{
			\pgfmathtruncatemacro{\b}{\y+1}
			\path[opacity=0.25]
			(\x\y) edge (\x\b);
		}
		\foreach \x in {4,...,4}
		\foreach \y in {2,...,5}
		{
			\pgfmathtruncatemacro{\a}{\x+1}
			\pgfmathtruncatemacro{\b}{\y+1}
			\path[opacity=0.25]
			(\x\y) edge (\a\y);
		}
		\foreach \x in {4,...,4}
		\foreach \y in {2,...,4}
		{
			\pgfmathtruncatemacro{\a}{\x+1}
			\pgfmathtruncatemacro{\b}{\y+1}
			\path[opacity=0.25](\x\y) edge (\a\b);
		}
		
		\foreach \x in {4,...,4}
		\foreach \y in {3,...,5}
		{
			\pgfmathtruncatemacro{\a}{\x+1}
			\pgfmathtruncatemacro{\b}{\y-1}
			\path[opacity=0.25]
			(\x\y) edge (\a\b);
		}
		
		\foreach \x in {0,...,5}
		\foreach \y in {1,...,1}
		{
			\pgfmathtruncatemacro{\b}{\y+1}
			\draw[ dotted](\x\y) -- (\x\b);
		}
		
		\foreach \x in {3,...,3}
		\foreach \y in {0,...,5}
		{
			\pgfmathtruncatemacro{\a}{\x+1}
			\draw[ dotted](\x\y) -- (\a\y);
		}
		
		\draw[opacity=0.25]	(40) -- (50)--(51)--(40)--(41)--(51);
		\draw[thick, dotted]	(32) -- (41);

		\foreach \j in {0,...,5}{
			\pgfmathtruncatemacro{\a}{5-\j}
			\node[fill=black]  (\j\a) at (\j,\a) {};
		}
		
		
		\node[fill=black]  (15) at (1,5) {};
		\node[fill=black]  (24) at (2,4) {};
		\node[fill=black]  (33) at (3,3) {};
		\node[fill=black]  (51) at (5,1) {};
		
		\draw[thick] (05) --(32);
		\draw[thick] (41)--(50);
		\draw[thick] (15)--(33);

		\draw[thick]  (05) --(15) {};
		\draw[dashed]  (14) --(24) {};
		\draw[dashed]  (23) --(33) {};
		\draw[dashed]  (41) --(51) {};
		
		\draw[dashed]  (14) --(15) {};
		\draw[dashed]  (23) --(24) {};
		\draw[dashed]  (32) --(33) {};
		\draw[thick]  (50) --(51) {};
		
		\draw[thick, dotted] (3,3)--(5,1){};
		
		\node[draw=none] at (-1.8, 5){\small{$w_1 =\left(0, \frac{n-1}{2}\right)$}};
		\node[draw=none] at (5,-0.6){\small{$w_2 =\left( \frac{n-1}{2},0\right)$}};
		\end{tikzpicture}
\end{minipage}%
\begin{minipage}[c]{0.5\textwidth}
\centering
$n$ even	
\begin{tikzpicture}[scale=.7,every node/.style={draw,shape=circle,outer sep=2pt,inner sep=1pt,minimum
	size=.2cm}]
\foreach \x in {0,...,4}
\foreach \y in {0,...,5}
{
	\vertex[opacity=0.25]  (\x\y) at (\x,\y) {};
}
\foreach \x in {0,...,1}
\foreach \y in {2,...,4}
{
	\pgfmathtruncatemacro{\a}{\x+1}
	\pgfmathtruncatemacro{\b}{\y+1}
	\path[opacity=0.25]
	(\x\y) edge (\a\b)
	(\x\b) edge (\a\y);
}
\foreach \x in {0,...,1}
\foreach \y in {2,...,4}
{
	\pgfmathtruncatemacro{\a}{\x+1}
	\pgfmathtruncatemacro{\b}{\y+1}
	\path[opacity=0.25] (\x\y)edge(\x\b)
	(\x\y) edge (\a\y);
}
\draw[opacity=0.25]	(22) -- (23)--(24)--(25);
\draw[opacity=0.25]	(05) -- (15)--(25);

%
\draw[opacity=0.25]	(00) -- (01)--(10)--(11)--(20)--(21);
\draw[opacity=0.25]	(00) -- (10)--(20);
\draw[opacity=0.25]	(01) -- (11)--(21);
\draw[opacity=0.25]	(00) -- (11);
\draw[opacity=0.25]	(10) -- (21);

\foreach \x in {3,...,4}
\foreach \y in {2,...,4}
{
	\pgfmathtruncatemacro{\b}{\y+1}
	\path[opacity=0.25]
	(\x\y) edge (\x\b);
}
\foreach \x in {3,...,3}
\foreach \y in {2,...,5}
{
	\pgfmathtruncatemacro{\a}{\x+1}
	\pgfmathtruncatemacro{\b}{\y+1}
	\path[opacity=0.25]
	(\x\y) edge (\a\y);
}
\foreach \x in {3,...,3}
\foreach \y in {2,...,4}
{
	\pgfmathtruncatemacro{\a}{\x+1}
	\pgfmathtruncatemacro{\b}{\y+1}
	\path[opacity=0.25](\x\y) edge (\a\b);
}

\foreach \x in {3,...,3}
\foreach \y in {3,...,5}
{
	\pgfmathtruncatemacro{\a}{\x+1}
	\pgfmathtruncatemacro{\b}{\y-1}
	\path[opacity=0.25]
	(\x\y) edge (\a\b);
}

\foreach \x in {0,...,4}
\foreach \y in {1,...,1}
{
	\pgfmathtruncatemacro{\b}{\y+1}
	\draw[ dotted](\x\y) -- (\x\b);
}

\foreach \x in {2,...,2}
\foreach \y in {0,...,5}
{
	\pgfmathtruncatemacro{\a}{\x+1}
	\draw[ dotted](\x\y) -- (\a\y);
}

\draw[opacity=0.25]	(30) -- (40)--(41)--(30)--(31)--(41);
\draw[thick, dotted]	(22) -- (31);

\node[fill=black]  (04) at (0,4) {};
\node[fill=black]  (13) at (1,3) {};
\node[fill=black]  (14) at (1,4) {};
\node[fill=black]  (15) at (1,5) {};
\node[fill=black]  (22) at (2,2) {};
\node[fill=black]  (23) at (2,3) {};
\node[fill=black]  (31) at (3,1) {};
\node[fill=black]  (40) at (4,0) {};
\node[fill=black]  (41) at (4,1) {};

\draw[thick] (04) --(22);
\draw[thick] (31)--(40);
\draw[thick] (14)--(23);
\draw[thick] (04)--(15);

\draw[dashed]  (04) --(14) {};
\draw[dashed]  (13) --(23) {};
\draw[dashed]  (31) --(41) {};

\draw[thick]  (14) --(15) {};
\draw[dashed]  (13) --(14) {};
\draw[dashed]  (22) --(23) {};
\draw[thick]  (40) --(41) {};

\draw[thick, dotted] (2,3) --(4,1){};

\node[draw=none] at (-1.8, 4){\small{$w_1 =\left(0, \frac{n-2}{2}\right)$}};
\node[draw=none] at (4,-0.6){\small{$w_2 =\left( \frac{n-2}{2},0\right)$}};

\end{tikzpicture}
\end{minipage}
    \caption{A $\{w_1,w_2\}$-resolved embedding $\varphi(C_n)$ in $P^{\boxtimes,2}_{\lceil \frac{n+1}{2} \rceil}$ that is an isometric subgraph of $P^{\boxtimes,2}_{\lceil \frac{n+1}{2} \rceil}$, where the dashed edges represent the edges we add to $C_n$ to obtain the embedding $\varphi(C_n)$.}
    \label{cycle-embedding}
\end{figure}
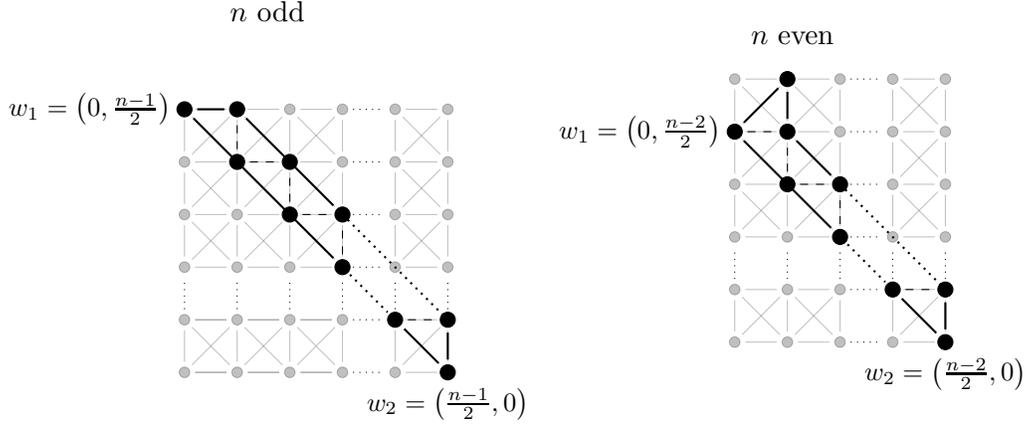

\section{Graphs with vertex covering number 2 that are realizable as strong resolving  graphs}\label{vertexcoveringnumber2}

It is well-known that a graph has strong dimension $1$ if and only if it is a path. Thus paths are $\beta_s$-irreducible graphs of strong dimension $1$ and these are the only such graphs. Thus all graphs with strong dimension $2$ are also $\beta_s$-irreducible. In this section we completely describe the strong resolving graphs for graphs of strong dimension $2$.  In \cite{KuziakPuertas_etal2018} the authors posed the problem of determining which graphs can be realized as strong resolving graphs of some graph. They conjectured that the complete bipartite graphs $K_{s,r}$ where $s,r \ge 2$ are not realizable as strong resolving graphs. Their conjecture was settled in \cite{Lenin2019}. We show next that if a graph has strong dimension $2$, then its strong resolving graph does not contain $K_{2,2}$ as subgraph.

\begin{lemma}
	Let $G$ be a graph such that $\alpha(G_{SR})=2$ and let $w_1$ and $w_2$ be a vertex cover of $G_{SR}$. Then $w_1$ and $w_2$ have at most one common neighbour in $G_{SR}$. 
\end{lemma}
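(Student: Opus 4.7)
The plan is to argue by contradiction: suppose $w_1$ and $w_2$ share two distinct common neighbours $u,v$ in $G_{SR}$. Unpacking the definition of $G_{SR}$, this gives the four relations $u \text{ MMD } w_1$, $u \text{ MMD } w_2$, $v \text{ MMD } w_1$, $v \text{ MMD } w_2$ in $G$. Meanwhile, by Theorem \ref{G_SR} (or rather its underlying correspondence between strong resolving sets and vertex covers of $G_{SR}$), the vertex cover $\{w_1,w_2\}$ of $G_{SR}$ is a strong resolving set for $G$, so some $w_i$ must strongly resolve the pair $u,v$.

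Next I would eliminate a direction by symmetry: relabeling the $w_i$ and swapping the roles of $u,v$ if needed, I may assume there is a shortest $u$--$w_1$ path $P\colon u=x_0,x_1,\ldots,x_k=v,\ldots,x_n=w_1$, where $n=d(u,w_1)$ and $k=d(u,v)\ge 1$ (note $u\ne v$ since they are distinct, and $v\ne w_1$ because $v$ is a neighbour of $w_1$ in $G_{SR}$, forcing $k\ge 1$).

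The decisive step exploits that $v$ MMD $w_1$ in $G$, so in particular every neighbour of $v$ is at distance at most $d(v,w_1)=n-k$ from $w_1$. Applied to the neighbour $x_{k-1}\in N(v)$ appearing on $P$, this yields $d(x_{k-1},w_1)\le n-k$. Combining this with the obvious bound $d(u,x_{k-1})\le k-1$ coming from the initial segment of $P$, the triangle inequality forces
\[
d(u,w_1)\;\le\;d(u,x_{k-1})+d(x_{k-1},w_1)\;\le\;(k-1)+(n-k)\;=\;n-1,
\]
contradicting $d(u,w_1)=n$. The symmetric case (shortest $v$--$w_1$ path through $u$) is handled identically by swapping the roles of $u$ and $v$ and using $u$ MMD $w_1$; the cases involving $w_2$ are identical with $w_2$ in place of $w_1$.

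I do not expect real difficulty here: all the work is in identifying which MMD relation to invoke. The only subtlety to watch is the degenerate endpoints ($k=0$ or $v=w_1$), both of which are ruled out by $u\ne v$ and by $v$ being a true neighbour of $w_1$ in $G_{SR}$.
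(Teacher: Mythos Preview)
Your proof is correct and follows essentially the same approach as the paper: assume two common neighbours $u,v$, use that $\{w_1,w_2\}$ strongly resolves $G$ to place one of $u,v$ on a geodesic to some $w_i$, and then contradict the relevant MMD relation. The only cosmetic difference is that the paper spells out the symmetric case (with $u$ interior on a $w_1$--$v$ geodesic) and derives the contradiction in one line by noting $u$ has a neighbour farther from $w_1$, whereas you take the mirror case and route the contradiction through a short triangle-inequality computation.
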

\begin{proof}
	Suppose $w_1$ and $w_2$ have two common neighbours in $G_{SR}$, say vertices $u$ and $v$. Since $\{w_1, w_2\}$ is a strong resolving set for $G$, one of $w_1$ and $w_2$ strongly resolves $u$ and $v$, say $w_1$. So, either $u$ lies on a shortest $w_1-v$ path or $v$ lies on a shortest $w_1-u$ path. We may assume the former. Then $u$ has a neighbour on a shortest $w_1-v$ path that contains $u$, that is further from $w_1$ then $u$. This implies that $u$ is not maximally  distant from $w_1$ and hence $u$ and $w_1$ are not MMD in $G_{SR}$. This is contrary to the assumption that $w_1u \in E(G_{SR})$.
\end{proof}

\begin{remark}
We note that Lemma 4.1 establishes that if a graph $G$ has strong dimension $2$,  then its strong resolving graph does not contain $K_{2,2}$ as a subgraph.
\end{remark}

Let $\{w_1,w_2\}$ be a vertex cover of the strong resolving graph of a graph with strong dimension $2$. From the above lemma, we see that the only possible candidates for graphs with vertex covering number 2 that can be realized by the strong resolving graph of some graphs $G$ fall into one of four categories. We describe these below and in each case construct a graph that has the given graph as its strong resolving graph. In order to describe these constructions, we will use subgraphs of strong products of paths. To this end we assume that the vertices of  a path $P_k$ of order $k$ have been labeled $0,1, \ldots, k-1$ and whenever considering the strong product $P_k \boxtimes P_l$ we will assume that it has been indrawn in the plane so that a vertex $(x,y)$ of this strong product is positioned at the point $(x,y)$ in the plane.

\begin{enumerate}
	\item[Type 1: ] $G_{SR}$ is the disjoint union of two stars, $K_{1,m} \cup K_{1,n}$, where $1\leq m\leq n$. See Figure 4.

	\begin{figure}[tbh]
		\centering
	\begin{tikzpicture}[scale=.5,every node/.style={draw,shape=circle,outer sep=2pt,inner sep=1pt,minimum
		size=.2cm}]
	
	\node(a1) at (0,0){};
	
	\foreach \d in {1,...,4} {
		\node (b\d) at ($(0,0)+(90-\d*22.5:20mm)$) {};
	}
	\node[draw=none] at ($(0,0)+(90-5*22.5:20mm)$){\vdots};
	\node(b5) at ($(0,0)+(90-6*22.5:20mm)$){};
	\foreach \d in {1,...,5} {
		\draw[thick] (a1) --(b\d){};
	}
	
	\node(a2) at (-2,0){};
	
	\foreach \n in {12,..., 15} {
		\node (b\n) at ($(-2,0)+(90-\n*22.5:20mm)$) {};
	}
	\node[draw=none] at ($(-2,0)+(90-11*22.5:20mm)$){\vdots};
	\node(b11) at ($(-2,0)+(90-10*22.5:20mm)$){};
	\foreach \d in {11,...,15} {
		\draw[thick] (a2) --(b\d){};
	}
	\node[draw=none] at (-2.1,-2){$K_{1,m}$};
	\node[draw=none] at (0.9,-2){$K_{1,n}$};
		\node[draw=none] at (-4.5,-0.8){$m$};
	\node[draw=none] at (2.5,-0.8){$n$};
	\end{tikzpicture}
	\caption{Type 1 resolving graph}
\end{figure}
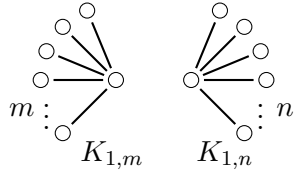


\noindent {$\bullet$} Case 1: $m$ and $n$ have the same parity.  In this case let $H=P_{n+2+\frac{n-m}{2} } \boxtimes P_{n+2}$. Let $G$ be the subgraph of $H$ induced by the vertices on the boundary and in the interior of the region  bounded by the following paths:
 
 \begin{align*}
 Q_1:&(0,n),(1,n-1), \ldots , (n,0)\\
 Q_2:&(0,n), (1,n+1), \ldots, (\frac{n-m}{2}+1, n+\frac{n-m}{2}+1)\\
 Q_3:&(\frac{n-m}{2}+1, n+\frac{n-m}{2}+1), (\frac{n-m}{2}+2, n + \frac{n-m}{2}+1), \ldots, (\frac{n+m}{2}, n+\frac{n-m}{2}+1)\\
 Q_4:&(\frac{n+m}{2}, n+\frac{n-m}{2}+1), (\frac{n+m}{2}+1, n+\frac{n-m}{2}), \ldots, (n+1,n)\\
 Q_5:&(n+1,n),(n+1,n-1), \ldots, (n+1,1)\\
 Q_6:&(n+1,1),(n,0)
 \end{align*}
 
 Then the graph $G$ has the property that $G_{SR} \cong K_{1,m} \cup K_{1,n}$. Figure \ref{strong_resolving_union_of_stars_same_parity} illustrates the construction with $m=n=6$.
 
 \begin{figure}[tbh]
 \begin{center}
 \begin{tikzpicture}[scale=.5,every node/.style={draw,shape=circle,outer sep=2pt,inner sep=1pt,minimum
 	size=.2cm}]
 \foreach \x in {0,...,7}
 \foreach \y in {0,...,7}
 {
 	\vertex[opacity=0.25]  (\x\y) at (\x,\y) {};
 }
 \foreach \x in {0,...,6}
 \foreach \y in {0,...,6}
 {
 	\pgfmathtruncatemacro{\a}{\x+1}
 	\pgfmathtruncatemacro{\b}{\y+1}
 	\path[opacity=0.25]
 	(\x\y) edge (\a\b)
 	(\x\b) edge (\a\y);
 }
 \foreach \x in {0,...,6}
 \foreach \y in {0,...,7}
 {
 	\pgfmathtruncatemacro{\a}{\x+1}
 	\path[opacity=0.25]
 	(\x\y) edge (\a\y)
 	(\y\x) edge (\y\a);
 }
 
 \foreach \j in {1,...,7}{
 	 		\node[fill=black]  (0\j) at (\j-1,7-\j) {};
 	 	}
  	
  \foreach \j in {1,...,6}{
  	\node[fill=black]  (1\j) at (\j,7-\j) {};
  }

 \foreach \j in {1,...,7}{
	\node[fill=black]  (2\j) at (\j,8-\j) {};
}
\foreach \j in {1,...,6}{
	\node[fill=black]  (3\j) at (\j+1,8-\j) {};
}
\foreach \j in {1,...,5}{
	\node[fill=black]  (4\j) at (\j+2,8-\j) {};
}
\foreach \j in {1,...,4}{
	\node[fill=black]  (5\j) at (\j+3,8-\j) {};
}
\foreach \j in {1,...,3}{
	\node[fill=black]  (6\j) at (\j+4,8-\j) {};
}
\foreach \j in {1,...,2}{
	\node[fill=black]  (7\j) at (\j+5,8-\j) {};
}
 \foreach \k in {2,...,7}{
 
 		\pgfmathtruncatemacro{\b}{9-\k}
 		\draw[thick] (\k1) --	(\k\b);
 	}
 
  \foreach \k in {0,...,1}{
 	\pgfmathtruncatemacro{\c}{7-\k}
 	\draw[thick] (\k1) --	(\k\c);
 }

\draw[thick] (21) --	(71);
  \foreach \k in {1,...,6}{
	\pgfmathtruncatemacro{\d}{\k+1}
	\pgfmathtruncatemacro{\e}{8-\k}
	\draw[thick] (0\k) --	(\e\d);
}
\draw[thick] (01) --	(21);
\draw[thick] (02) --	(41);
\draw[thick] (03) --	(61);
\draw[thick] (04) --	(62);
\draw[thick] (05) --	(63);
\draw[thick] (06) --	(45);
\draw[thick] (07) --	(27);

\draw[thick] (11) --	(31);
\draw[thick] (12) --	(51);
\draw[thick] (13) --	(71);
\draw[thick] (14) --	(72);
\draw[thick] (15) --	(54);
\draw[thick] (16) --	(36);

\draw[thick] (27) --	(72);

\foreach \k in {2,...,7}{
	\draw[thick] (0\k) --	(\k1);
}

 \end{tikzpicture}
 \end{center}
 \caption{A graph with strong resolving graph $K_{1,6 } \cup K_{1,6}$}
 \label{strong_resolving_union_of_stars_same_parity}
 \end{figure}
 
\noindent $\bullet$ Case 2: $m$ and $n$ have opposite parity and $1 \le m <n$.  Let $H=P_{n+2+\frac{n-m-1}{2} } \boxtimes P_{n+1}$. We now describe a graph $G$ as an induced subgraph of $H$ using the following paths.

 \begin{align*}
 Q_1:&(0,n),(1,n-1), \ldots , (n,0)\\
 Q_2:&(0,n), (1,n+1), \ldots, (\frac{n-m+1}{2}, \frac{3n-m+1}{2})\\
 Q_3:&(\frac{n-m+1}{2}, \frac{3n-m+1}{2}), (\frac{n-m+3}{2},  \frac{3n-m+1}{2}), \ldots, (\frac{n+m-1}{2}, \frac{3n-m+1}{2})\\
 Q_4:&(\frac{n+m-1}{2}, \frac{ 3n-m+1}{2}), (\frac{n+m-1}{2}+1, \frac{3n-m-1}{2}), \ldots, (n,n)\\
 Q_5:&(n,n),(n,n-1), \ldots, (n,0)
 \end{align*}
 
 Let $G$ be the subgraph of $H$ induced by the vertices on the boundary and in the interior of the region bounded by these paths $Q_1$, ..., $Q_5$. Then $G_{SR} \cong K_{1,m} \cup K_{1,n}$. Figure  \ref{strong_resolving_union_of_stars_opposite_parity} illustrates the construction for $m=5$ and $n=6$.

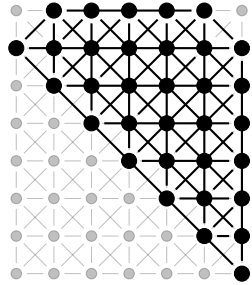
\begin{figure}[tbh]
 \begin{center}
  	 \begin{tikzpicture}[scale=.5,every node/.style={draw,shape=circle,outer sep=2pt,inner sep=1pt,minimum
 		size=.2cm}]
 	\foreach \x in {0,...,6}
 	\foreach \y in {0,...,6}
 	{
 		\vertex[opacity=0.25]  (\x\y) at (\x,\y) {};
 	}
 	
 	\foreach \x in {0,...,6}
 	{
 		\vertex[opacity=0.25]  (\x7) at (\x,7) {};
 	}
 	\draw[opacity=0.25]  (0,6.3) -- (0,6.7);
 	\draw[opacity=0.25]  (0.3,7) -- (0.7,7);
 	\draw[opacity=0.25]  (0.3,6.7) -- (0.7,6.3);
 	\draw[opacity=0.25]  (6,6.3) -- (6,6.7);
 	\draw[opacity=0.25]  (5.3,7) -- (5.7,7);
 	\draw[opacity=0.25]  (5.3,6.3) -- (5.7,6.7);

 	\foreach \x in {0,...,5}
 	\foreach \y in {0,...,5}
 	{
 		\pgfmathtruncatemacro{\a}{\x+1}
 		\pgfmathtruncatemacro{\b}{\y+1}
 		\path[opacity=0.25]
 		(\x\y) edge (\a\b)
 		(\x\b) edge (\a\y);
 	}
 	\foreach \x in {0,...,5}
 	\foreach \y in {0,...,6}
 	{
 		\pgfmathtruncatemacro{\a}{\x+1}
 		\path[opacity=0.25]
 		(\x\y) edge (\a\y)
 		(\y\x) edge (\y\a);
 	}
 	
 	\foreach \j in {1,...,7}{
 		\node[fill=black]  (0\j) at (\j-1,7-\j) {};
 	}
 	
 	\foreach \j in {1,...,6}{
 		\node[fill=black]  (1\j) at (\j,7-\j) {};
 	}
 	
 	\foreach \j in {1,...,6}{
 		\node[fill=black]  (2\j) at (\j,8-\j) {};
 	}
 	\foreach \j in {1,...,5}{
 		\node[fill=black]  (3\j) at (\j+1,8-\j) {};
 	}
 	\foreach \j in {1,...,4}{
 		\node[fill=black]  (4\j) at (\j+2,8-\j) {};
 	}
 	\foreach \j in {1,...,3}{
 		\node[fill=black]  (5\j) at (\j+3,8-\j) {};
 	}
 	\foreach \j in {1,...,2}{
 		\node[fill=black]  (6\j) at (\j+4,8-\j) {};
 	}
 	\foreach \k in {2,...,6}{
 		
 		\pgfmathtruncatemacro{\b}{8-\k}
 		\draw[thick] (\k1) --	(\k\b);
 	}
 	
 	\foreach \k in {0,...,1}{
 		\pgfmathtruncatemacro{\c}{7-\k}
 		\draw[thick] (\k1) --	(\k\c);
 	}
 	%
 	\draw[thick] (21) --	(61);
 	\draw[thick] (06) --	(16);
 	\foreach \k in {1,...,5}{
 		\pgfmathtruncatemacro{\d}{\k+1}
 		\pgfmathtruncatemacro{\e}{7-\k}
 		\draw[thick] (0\k) --	(\e\d);
 	}
 	\draw[thick] (01) --	(21);
 	\draw[thick] (02) --	(41);
 	\draw[thick] (03) --	(61);
 	\draw[thick] (04) --	(62);
 	\draw[thick] (05) --	(44);
 	\draw[thick] (06) --	(26);
 	%
 	\draw[thick] (11) --	(31);
 	\draw[thick] (12) --	(51);
 	\draw[thick] (13) --	(52);
 	\draw[thick] (14) --	(53);
 	\draw[thick] (15) --	(35);

 	\draw[thick] (07) --	(62);
 	
 	\foreach \k in {2,...,6}{
 		\draw[thick] (0\k) --	(\k1);
 	} 
 	\end{tikzpicture}
 \caption{A graph with strong resolving graph $K_{1,5 } \cup K_{1,6}$}
 \label{strong_resolving_union_of_stars_opposite_parity}
 \end{center}

	\end{figure}

	\item[Type 2:] $G_{SR}$ is $K_{1,m} \cup K_{1,n}$ + $w_1w_2$ for ($1\leq m\leq n$), where $w_1$ and $w_2$ are the centers of the  stars $K_{1,m}$ and $K_{1,n}$. See Figure 7.
	
	\begin{figure}[h]
	\begin{center}
	\begin{tikzpicture}[scale=.5,every node/.style={draw,shape=circle,outer sep=2pt,inner sep=1pt,minimum
			size=.2cm}]
		
		\node(a1) at (0,0){};
		
		\foreach \d in {1,...,4} {
			\node (b\d) at ($(0,0)+(90-\d*22.5:20mm)$) {};
		}
		\node[draw=none] at ($(0,0)+(90-5*22.5:20mm)$){\vdots};
		\node[draw=none] at (0,-0.7){$w_2$};
		\node(b5) at ($(0,0)+(90-6*22.5:20mm)$){};
		\foreach \d in {1,...,5} {
			\draw[thick] (a1) --(b\d){};
		}
		
		\node(a2) at (-2,0){};
		
		\foreach \n in {12,..., 15} {
			\node (b\n) at ($(-2,0)+(90-\n*22.5:20mm)$) {};
		}
		\node[draw=none] at ($(-2,0)+(90-11*22.5:20mm)$){\vdots};
		\node[draw=none] at (-2,-0.7){$w_1$};
		\node(b11) at ($(-2,0)+(90-10*22.5:20mm)$){};
		\foreach \d in {11,...,15} {
			\draw[thick] (a2) --(b\d){};
		}
		\draw[thick] (a1) --(a2){};
		\node[draw=none] at (-4.5,-0.8){$m$};
		\node[draw=none] at (2.5,-0.8){$n$};
		\end{tikzpicture}
		\caption{Type 2 strong resolving graph}
		\end{center}
		\end{figure}
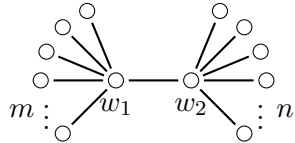

	If $m$ and $n$ have the same parity, then let $G$ be obtained from the graph described in Case 1 of Type 1 by adding a leaf adjacent to each of the vertices  $(0,n)$ and $(n,0)$. These two new leaves then become $w_1$ and $w_2$.  If $m$ and $n$ have opposite parity, then let $G$ be obtained from the graph described in Case 2 of Type 1 by adding a leaf adjacent to $(0,n)$. Then $G_{SR}  \cong  (K_{1,m} \cup K_{1,n} + w_1w_2)$ where $w_1$ and $w_2$ are the centers of the two stars in the union.

	\item[Type 3:] $G_{SR}$ is $K_{1,m} \cup K_{1,n} + \{vw_1,vw_2\}$  for ($1\leq m\leq n$), where $w_1$ and $w_2$ are the centers of the two stars  and $v$ is a new vertex.  See Figure 8.
	
		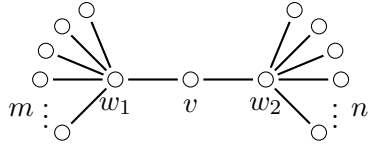
\begin{figure}[h]
		\begin{center}
		\begin{tikzpicture}[scale=.5,every node/.style={draw,shape=circle,outer sep=2pt,inner sep=1pt,minimum
			size=.2cm}]
		
		\node(a1) at (2,0){};
		\node(v) at(0,0){};
		\node[draw=none] at (0,-0.7){$v$};
		
		\foreach \d in {1,...,4} {
			\node (b\d) at ($(2,0)+(90-\d*22.5:20mm)$) {};
		}
		\node[draw=none] at ($(2,0)+(90-5*22.5:20mm)$){\vdots};
		\node[draw=none] at (2,-0.7){$w_2$};
		\node(b5) at ($(2,0)+(90-6*22.5:20mm)$){};
		\foreach \d in {1,...,5} {
			\draw[thick] (a1) --(b\d){};
		}
		
		\node(a2) at (-2,0){};
		
		\foreach \n in {12,..., 15} {
			\node (b\n) at ($(-2,0)+(90-\n*22.5:20mm)$) {};
		}
		\node[draw=none] at ($(-2,0)+(90-11*22.5:20mm)$){\vdots};
		\node[draw=none] at (-2,-0.7){$w_1$};
		\node(b11) at ($(-2,0)+(90-10*22.5:20mm)$){};
		\foreach \d in {11,...,15} {
			\draw[thick] (a2) --(b\d){};
		}
		\draw[thick] (a1) --(v) -- (a2){};
		\node[draw=none] at (-4.5,-0.8){$m$};
		\node[draw=none] at (4.5,-0.8){$n$};
		\end{tikzpicture}
		\caption{Type 3 strong resolving graph}
	\end{center}
	\end{figure}
	
	If $m$ and $n$ have the same parity we take the graph described in Case 1 for Type 1 and add the vertex $(\frac{n+m+2}{2}, \frac{3n-m+2}{2})$ and join it to each of $(\frac{n+m}{2}, \frac{3n-m+2}{2})$, $(\frac{n+m+2}{2}, \frac{3n-m}{2})$ and $(\frac{n+m}{2},  \frac{3n-m}{2})$. If $m$ and $n$ have opposite parity, then we take the graph constructed in Case 2 for Type 1 and add the vertex $(\frac{n+m+1}{2},  \frac{3n-m+1}{2})$ and join it to 
$(\frac{n+m-1}{2},  \frac{3n-m+1}{2})$, $(\frac{n+m+1}{2},  \frac{3n-m-1}{2})$ and\\ $(\frac{n+m-1}{2},  \frac{3n-m-1}{2})$. In either case let $G$ be the resulting graph. Then $G_{SR} \cong (K_{1,m} \cup K_{1,n}+ vw_1,vw_2)$  for ($1\leq m\leq n$), where $w_1$ and $w_2$ are the centers of the two stars $K_{1,m}$ and $K_{1,n}$ and $v$ is a new vertex. 

\item[Type 4:] $G_{SR}$ is $K_{1,m} \cup K_{1,n} + \{vw_1,vw_2,w_1w_2\}$ for  ($1\leq m\leq n$), where $w_1$ and $w_2$ are the centers of the two stars  and $v$ is a new vertex. See Figure 9.
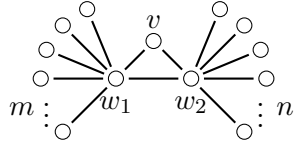
\begin{figure}[h]
	\begin{center}
		\begin{tikzpicture}[scale=.5,every node/.style={draw,shape=circle,outer sep=2pt,inner sep=1pt,minimum
			size=.2cm}]
		
		\node(a1) at (0,0){};
		
		\foreach \d in {1,...,4} {
			\node (b\d) at ($(0,0)+(90-\d*22.5:20mm)$) {};
		}
		\node[draw=none] at ($(0,0)+(90-5*22.5:20mm)$){\vdots};
		\node[draw=none] at (0,-0.7){$w_2$};
		\node(b5) at ($(0,0)+(90-6*22.5:20mm)$){};
		\foreach \d in {1,...,5} {
			\draw[thick] (a1) --(b\d){};
		}
		
		\node(a2) at (-2,0){};
		
		\foreach \n in {12,..., 15} {
			\node (b\n) at ($(-2,0)+(90-\n*22.5:20mm)$) {};
		}
		\node[draw=none] at ($(-2,0)+(90-11*22.5:20mm)$){\vdots};
		\node[draw=none] at (-2,-0.7){$w_1$};
		\node(b11) at ($(-2,0)+(90-10*22.5:20mm)$){};
		\foreach \d in {11,...,15} {
			\draw[thick] (a2) --(b\d){};
		}
		\node[draw=none] at (-4.5,-0.8){$m$};
		\node[draw=none] at (2.5,-0.8){$n$};
		\node(c) at (-1.0,1.0){}; 
		\node[draw=none] at (-1.0, 1.5){$v$}; 
		\draw[thick] (a1) --(a2)--(c) --(a1) {};
		\end{tikzpicture}
		\caption{Type 4 strong resolving graph}
	\end{center}
\end{figure}

In this case we take the graph described in Case 1 or Case 2 of Type 3 above and add a leaf to the vertices $(0,n)$ and $(n,0)$. In either case let $G$ be the resulting graph. Then $G_{SR} \cong (K_{1,m} \cup K_{1,n}+ vw_1,vw_2,w_1w_2)$  for ($1\leq m\leq n$), where $w_1$ and $w_2$ are the centers of the two stars  and $v$ is a new vertex. 
	\end{enumerate}

\section{Bounds} \label{bounds}
It appears to be a difficult problem to determine the threshold strong dimension of a graph. In this section we establish bounds for this invariant for graphs in general and for trees.  Let $\mathcal P(S)$ denote the power set of a set $S$. The complete graph with vertex set $V$ is denoted by $K_V$. If $|V|=n$ then $K_V$ is isomorphic to the complete graph on $n$ vertices, which is denoted by $K_n$. The complete $\ell$-partite graph with partite sets $V_1,V_2,\ldots,V_\ell$ is denoted by $K_{V_1,V_2,\ldots,V_\ell}$. If $G$ is a complete graph of order $n$, then $\tau_s(G)=\beta_s(G)=n-1$. The next result gives an upper bound for $\tau_s(G)$ when $G$ is not complete.

\begin{theorem} Let $G$ be a non-complete graph with $\chi(G) = k$ and let $V_1,V_2,\ldots,V_k$ be the color classes in a proper $k$-coloring of $G$, where $|V_1|\leq |V_2|\leq \cdots \leq |V_k|$. \begin{enumerate}\item If there is an $\ell \geq 1$ such that $|V_i| = 1$ for $1\leq i\leq \ell$ and $|V_i|>1$ for $\ell < i \leq k$, then $\tau_s(G)\leq \ell-1+ \sum_{i=\ell+1}^k\lceil\log_2|V_i|\rceil$.
\item If $|V_1|\geq 2$, then $\tau_s(G)\leq \sum_{i=1}^k\lceil\log_2|V_i|\rceil$.
\end{enumerate}
\end{theorem}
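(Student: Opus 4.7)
The plan is to establish both bounds by exhibiting, for a carefully chosen supergraph $H\in\mathcal{U}(G)$, a $W$-resolved isometric embedding of $G$ in a strong product of short paths with $|W|$ equal to the claimed bound, and then to invoke Corollary \ref{ThresholdStrongEmbedding}. In both cases the supergraph $H$ is designed to have diameter $2$: start from the complete multipartite graph $K_{V_1,V_2,\dots,V_k}$ (so that $G$ is automatically a spanning subgraph of $H$) and then add carefully chosen edges inside each non-singleton color class. Since $H$ has diameter $2$ and $D=\mbox{diam}(G)\ge 2$ (because $G$ is non-complete), the embedding lives inside $P_3^{\boxtimes,m}$, which is an isometric subgraph of $P_{D+1}^{\boxtimes,m}$.

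For part (2), in which every $V_i$ has at least two vertices, set $k_i=\lceil\log_2|V_i|\rceil$ and choose in each $V_i$ a set $T_i=\{t^{(i)}_1,\dots,t^{(i)}_{k_i}\}$ of $k_i$ landmarks. Turn the non-landmarks of $V_i$ into a clique, and attach each non-landmark to the landmarks of $V_i$ according to a distinct adjacency pattern in $\{0,1\}^{k_i}\setminus\{(1,1,\dots,1)\}$; the inequality $|V_i|-k_i\le 2^{k_i}-1$, which is valid because $k_i\ge 1$, guarantees that enough patterns are available. With $W=\bigcup_i T_i$ of size $m=\sum_i k_i$, partition the $m$ coordinates into blocks $B_1,\dots,B_k$ with $|B_i|=k_i$, and for each $v\in V_i$ define $\varphi(v)\in\{0,1,2\}^m$ to be $(1,1,\dots,1)$ on every block $B_j$ with $j\ne i$ and equal to $(d_H(v,t^{(i)}_1),\dots,d_H(v,t^{(i)}_{k_i}))$ on $B_i$. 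A case analysis on pairs $(u,v)$---two landmarks, one landmark and one non-landmark, two non-landmarks, same class versus different classes---then shows that $\varphi$ is injective, that $\varphi(v)$ really is the vector of $H$-distances from $v$ to the elements of $W$, and that the maximum coordinate difference between $\varphi(u)$ and $\varphi(v)$ equals $d_H(u,v)$, so that $\varphi$ is a $W$-resolved isometric embedding and Corollary \ref{ThresholdStrongEmbedding} yields the bound.

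For part (1), I would apply the part-(2) construction on each non-singleton class $V_{\ell+1},\dots,V_k$ and, from the $\ell$ singleton classes, place $\ell-1$ of the singleton vertices (say those of $V_1,\dots,V_{\ell-1}$) into $W$. The one excluded singleton $v_\ell$ then receives the all-ones label $(1,1,\dots,1)$, and the requirement that no code in any non-singleton block equals $(1,1,\dots,1)$---already imposed in part (2)---is precisely what prevents a collision with $v_\ell$. This yields $|W|=\ell-1+\sum_{i>\ell}k_i$, and the isometry verification is analogous.

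The main obstacle is the isometry condition rather than injectivity: for $\varphi$ to be isometric, any two non-landmarks in the same class $V_i$ must be at distance at most $1$ in $H$, because their codes both lie in $\{1,2\}^{k_i}$ and therefore differ coordinate-wise by at most $1$. This is precisely what forces the clique on $V_i\setminus T_i$, and hence the diameter-$2$ structure of $H$. The alphabet $\{0,1,2\}$ is then just large enough to separate landmarks (each having a $0$ in one coordinate of its own block), non-landmarks (whose codes in $\{1,2\}^{k_i}$ avoid the all-ones pattern), and inter-class pairs simultaneously, which is why $\lceil\log_2|V_i|\rceil$ coordinates per non-singleton class suffice.
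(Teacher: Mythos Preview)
Your proposal is correct, and the supergraph $H$ you build is exactly the one the paper constructs: start from the complete multipartite graph $K_{V_1,\dots,V_k}$, choose in each non-singleton class $V_i$ a set $W_i$ (your $T_i$) of $\lceil\log_2|V_i|\rceil$ landmarks, give each non-landmark a distinct $W_i$-neighbourhood avoiding the full set, and make the non-landmarks of $V_i$ a clique. Where you diverge is only in how you certify that $\beta_s(H)\le|W|$. The paper does this via Theorem~\ref{G_SR}: it identifies the MMD pairs in the diameter-$2$ graph $H$ (universal vertices, and non-adjacent pairs inside each $V_i$), reads off $H_{SR}$, and observes that $W$ is a vertex cover of $H_{SR}$. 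You instead write down the distance-vector map $\varphi$ into $P_3^{\boxtimes,|W|}$ and verify directly that it is a $W$-resolved isometric embedding, invoking Corollary~\ref{ThresholdStrongEmbedding}. Both verifications are short once $H$ is in hand; the paper's is more economical because the MMD analysis in a diameter-$2$ graph is immediate, while yours stays within the geometric framework of Section~\ref{geometricinterpretation} and makes explicit why the alphabet $\{0,1,2\}$ and the exclusion of the all-ones pattern are exactly what is needed.
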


\proof
1. We add edges to $G$ to form a supergraph $H$ for which $\beta_s(H)\leq \ell-1+ \sum_{i=\ell+1}^k\lceil\log_2|V_i|\rceil$. We start by adding all additional edges between the distinct pairs of color classes necessary to form the complete $k$-partite supergraph $K_{V_1,V_2,\ldots,V_k}$ of $G$. This graph has diameter 2, so for vertices $u$ and $v$, $u$MMD$v$ if and only if $u$ and $v$ are universal vertices  or $u$ and $v$ are nonadjacent. Each color class $V_i$ is an independent set. For each $i\in\{\ell+1, \ell+2,\ldots,k\}$ choose a subset $W_i\subseteq V_i$ such that $|W_i| = \lceil\log_2|V_i|\rceil$. Then $|\mathcal P(W_i)| \geq |V_i|$. For each $i\in \{\ell+1,\ldots,k\}$, assign to each $v\in V_i-W_i$ a member of $\mathcal P(W_i)-\{W_i\}$ such that every two vertices in $V_i-W_i$ are assigned distinct subsets of $W_i$. This is possible since $|\mathcal P(W_i) - \{W_i\}| \geq |V_i|-1 \geq |V_i-W_i|$. Add edges to $K_{V_1,V_2,\ldots,V_k}$ so that the set assigned to each vertex $v$ of $V_i-W_i$ is the $W_i$-neighbourhood of $v$ in the resulting graph. So every pair of vertices in $V_i-W_i$ have distinct $W_i$-neighbourhoods. Finally, $H$ is obtained by adding edges between vertices of $V_i-W_i$ so that they form a clique in $H$.

Now we construct the strong resolving graph $H_{SR}$ of $H$. The $\ell$  vertices of $V_1\cup V_2\cup \cdots \cup V_\ell$ are all universal vertices and hence are pairwise MMD. Also, for each $i\in\{\ell+1,\ldots,k\}$, the vertices of $W_i$ are pairwise MMD, and each vertex $v\in V_i-W_i$ is MMD with any vertex in $W_i-N_{W_i}(v)$. There are no other MMD pairs of vertices in $H$. Hence, the strong resolving graph $H_{SR}$ of $H$ contains the clique $K_{V_1\cup V_2\cup \cdots \cup V_\ell}$ $(\cong K_{\ell})$ and the clique $K_{W_i}$ for each $i$ where $\ell+1\leq i\leq k$. Apart from the edges in these cliques, $H_{SR}$ may contain some edges joining vertices in $W_i$ with vertices in $V_i-W_i$, for $\ell+1\leq i\leq k$.  Let $W= (V_1\cup V_2\cup\cdots \cup V_{\ell-1})\cup(W_{\ell+1}\cup W_{\ell+2}\cup \ldots\cup W_k)$. Then $W$ is a minimum vertex covering of $H_{SR}$, and so $\beta_s(H) = |W| = \ell-1+ \sum_{i=\ell+1}^k\lceil\log_2|V_i|\rceil$. Since $G$ is a spanning subgraph of $H$, it follows that $\tau_s(G) \leq \ell-1+ \sum_{i=\ell+1}^k\lceil\log_2|V_i|\rceil$.\\

\noindent 2. The proof of this case is similar to the proof of part 1. We construct a supergraph $H$ of $G$ using the same process as in the proof of case 1, assigning a unique $W_i$-neighbourhood to each vertex of $V_i-W_i$, where $W_i\subseteq V_i$ with $|W_i|=\lceil \log_2|V_i|\rceil$. Then $W_i$ induces a clique in the strong resolving graph $H_{SR}$, for $1\leq i\leq k$. In addition, $H_{SR}$ may contain edges joining vertices of $W_i$ with vertices in $V_i-W_i$. Hence $W = W_1\cup W_2 \cup \cdots\cup W_k$ is a minimum vertex covering of $H_{SR}$, and so $\beta_s(H) = |W| =  \sum_{i=1}^k\lceil\log_2|V_i|\rceil$. Since $G$ is a spanning subgraph of $H$, it follows that $\tau_s(G) \leq \sum_{i=1}^k\lceil\log_2|V_i|\rceil$.
\qed

\bigskip

We now present an improved bound for trees.

\begin{theorem} If $T$ is a tree with $n\geq 2$ vertices, then $\tau_s(T)\leq \lceil \log_2n\rceil$.
\end{theorem}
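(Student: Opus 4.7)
The plan is to split into two cases based on the number of leaves $L$ of $T$; throughout set $k = \lceil \log_2 n \rceil$. If $L \le k+1$, I exploit the well-known fact that in a tree the strong resolving graph is precisely the complete graph on the leaves of $T$ (any two leaves are mutually maximally distant, while every non-leaf $u$ fails to be maximally distant from any vertex $v$ because $u$ has a neighbor lying off the unique $u$--$v$ geodesic). By Theorem~\ref{G_SR} this yields $\beta_s(T) = L-1 \le k$, hence $\tau_s(T) \le \beta_s(T) \le k$, and the bound follows with $H = T$. The interesting case is therefore $L \ge k+2$.

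In that case I construct a supergraph $H \in \super(T)$ of diameter $2$ together with a set $W$ of $k$ leaves of $T$ that will be a vertex cover of $H_{SR}$, so that $\beta_s(H) = \alpha(H_{SR}) \le k$. Writing $K = V(T) \setminus W$, the construction proceeds in three steps: (i) add every missing edge within $K$, making $K$ a clique in $H$; (ii) assign to each $v \in V(T)$ a distinct subset $S_v \subseteq W$ with $S_v \supseteq N_T(v)\cap W$ (and $v \notin S_v$ when $v \in W$), and add edges so that $N_H(v) \cap W = S_v$; (iii) add further edges between $W$ and $K$ to ensure $H$ has diameter $2$, while keeping at least one vertex of $K$ non-adjacent to each $v \in W$ (possible since $|K| = n-k \ge 2$). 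The verification that $W$ covers $H_{SR}$ is then clean: in diameter~$2$, every MMD pair is either a non-adjacent pair (which must have an endpoint in $W$ by step (i), since $K$ is a clique) or a pair of true twins; the distinct $S_v$'s rule out the latter among $K$-vertices, and step~(iii) together with the distinctness of $S_v$ rules it out whenever $W$ is involved.

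The main obstacle is verifying that the injective assignment in step~(ii) is feasible. By Hall's theorem, this reduces to showing $\big|\bigcup_{v \in A} F_v\big| \ge |A|$ for every $A \subseteq V(T)$, where $F_v := \{S \subseteq W : S \supseteq N_T(v)\cap W\}$ (restricted to subsets avoiding $v$ if $v \in W$). The critical structural input from the tree is that, because $W$ consists of leaves and each leaf has a unique $T$-neighbor, the nonempty sets $\{N_T(v)\cap W : v \in K\}$ form a pairwise disjoint family of subsets of $W$, while $N_T(v) \cap W = \emptyset$ for every $v \in W$. Given this disjointness, a direct inclusion--exclusion calculation reduces Hall's condition to the elementary inequality $\prod_{j=1}^{p}(2^{t_j}-1) \le 2^{\sum_j t_j} - p$ for positive integers $t_j$, which follows by induction on $p$. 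I expect this combinatorial verification to be the hardest part of the proof; the edge additions and the diameter-$2$ cleanup are routine once Hall's has been established.
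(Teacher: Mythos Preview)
Your overall strategy matches the paper's: pick $W$ to be $k$ leaves, make $K=V(T)\setminus W$ a clique, assign distinct $W$-neighbourhoods to the vertices of $K$, and conclude that $W$ covers $H_{SR}$. However, your step~(iii) creates a genuine gap. You propose to add edges between $W$ and $K$ to force diameter~$2$, but every such edge alters the $W$-neighbourhood of some vertex of $K$. After these additions two vertices $u,v\in K$ may end up with the same $W$-neighbourhood, hence the same closed neighbourhood (since $K$ is a clique), making them true twins and producing an edge of $H_{SR}$ with both endpoints in $K$---which $W$ does not cover. Your later appeal to ``the distinct $S_v$'s'' among $K$-vertices is therefore no longer justified once step~(iii) has run. (Incidentally, the part of step~(ii) that assigns sets $S_v$ to vertices $v\in W$ is both ill-posed---the condition $N_H(v)\cap W=S_v$ for $v\in W$ forces a symmetry you never impose---and unnecessary: any MMD pair with an endpoint in $W$ is covered by $W$ regardless.)

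The paper avoids this problem by securing diameter~$2$ without a separate edge-addition step: it arranges that some vertex $u\in K$ receives $S_u=W$, making $u$ universal. This is easy to do directly, and in fact the paper's assignment argument is much simpler than your Hall's-theorem route. Since the nonempty sets $N_T(v)\cap W$ for $v\in K$ are already pairwise distinct (each $W$-leaf has a unique neighbour), the paper just sets $S_v=N_T(v)\cap W$ for those $v$ and then distributes the remaining subsets of $W$---including $W$ itself---arbitrarily among the remaining vertices of $K$; since $2^k\ge n>|K|$ there is room to do so. No matching argument and no inclusion--exclusion inequality are needed. Your proof is easily repaired along the same lines: drop the assignment of $S_v$'s to $W$-vertices, force $S_u=W$ for one $u\in K$, and delete step~(iii) entirely.
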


\proof Suppose a tree $T = (V,E)$ has $n$ vertices and $\ell$ leaves. If $\ell <\lceil\log_2 n\rceil$, then $T_{SR}\cong K_{\ell}$ which has vertex covering number $\ell-1$. In this case  $\beta_s(T)=\ell-1 < \lceil\log_2 n\rceil$. It follows that $\tau_s(T) \leq \lceil \log_2n\rceil$, as claimed. Suppose that $T$ has at least $\lceil \log_2n\rceil$ leaves. We add edges to $T$ to construct a supergraph $H$ for which $\beta_s(H) = \lceil \log_2n\rceil$. Let $W$ be a set of $\lceil\log_2 n\rceil$ leaves of $T$. Then $|\mathcal P(W)|\geq n$. Now assign to each vertex $v\in V-W$ a subset of vertices from $W$ that will be its $W$-neighbourhood in $H$. We do this in such a way that every two vertices of $V-W$ are assigned distinct $W$-neighbourhoods in $H$. Let $V'$ denote the set of vertices in $V-W$ whose $W$-neighbourhoods in $T$ are nonempty. If the $W$-neighbourhood of $v$ in $T$ is nonempty, then $v$ is adjacent to a set of leaves of $T$ that belong to $W$ and no other vertex of $V-W$ has the same leaf neighbours in $T$. In this case the assigned $W$-neighbourhood of $v$ in $H$ remains same as its $W$-neighbourhood in $T$. We now remove these assigned neighbourhoods from $\mathcal P(W)$. Let $\mathcal P'$ be the subset of $\mathcal P(W)$ that remains. To each of the remaining vertices in $(V-W)-V'$ we assign a unique member of $\mathcal P'$ in such a way that no two vertices of $(V-W)-V'$ are assigned the same element of $\mathcal P'$. For each $v\in V-W$, the vertices of the assigned member of $\mathcal P(W)$ becomes its $W$-neighbourhood in $H$. We ensure that exactly one vertex of $V-W$, say $u$, is assigned the whole set $W$ as its $W$-neighbourhood. Observe that this is possible since $|\mathcal P(W)|\geq n >|V-W|$. Finally, we obtain $H$ by adding edges between the vertices of $V-W$ so that they form a clique. Since the resulting graph $H$ has the universal vertex $u\in V-W$, $H$ has diameter 2, and so the MMD pairs of vertices of $H$ are the nonadjacent pairs of vertices. Since $W$ is an independent set, the vertices of $W$ are pairwise MMD, and each vertex $v\in V-W$ is MMD with each vertex in $V-N_W(v)$. There are no other pairs of MMD vertices in $H$. The vertices of $W$ form a clique in the strong resolving graph $H_{SR}$ of $H$. All other edges of $H_{SR}$ join vertices of $W$ with vertices in $V-W$. Thus $W$ is a minimum vertex covering of $H_{SR}$, and so $\beta_s(H) = |W| = \lceil \log_2n\rceil$. Since $T$ is a spanning subgraph of $H$, it follows that $\tau_s(T)\leq \lceil \log_2n\rceil$.\qed

\section{The threshold strong dimension of trees} \label{specialclasses}
In this section we show that for trees with strong dimension $3$ or $4$, the threshold strong dimension is $2$. To see that this does not extend to trees of higher strong dimension, we note that it was observed in \cite{MolMurphyOellermann2019} that $K_{1,6}$ does not have threshold dimension $2$. Since $\beta_s(K_{1,6})=5$ and $\tau(K_{1,6}) \le \tau_s(K_{1,6})$, we see that the threshold strong dimension of trees with strong dimension $5$ need not be $2$.  We also observe that there are trees of arbitrarily large dimension that have threshold strong dimension 2. We use the following known results from \cite{SeboTannier2004}.

\begin{theorem}{\em \cite{SeboTannier2004} }\label{StrongdimTrees}
Let $T$ be a tree. Then  $\beta_s(T) = \ell -1$  if and only if $T$ has $\ell$ leaves.
\end{theorem}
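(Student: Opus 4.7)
The plan is to reduce the claim to a vertex-cover computation via Theorem~\ref{G_SR}, which gives $\beta_s(T) = \alpha(T_{SR})$. Thus the entire theorem reduces to determining the structure of the strong resolving graph of a tree, which in turn reduces to classifying the mutually maximally distant pairs of vertices.

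The central step I would establish is that in any tree $T$, two vertices are MMD if and only if both are leaves. For the forward direction, if $u$ is internal, then $u$ has some neighbor $x$ that does not lie on the unique $u$-$v$ path in $T$. Since $T$ is a tree, the unique $v$-$x$ path must pass through $u$, so $d(v,x) = d(v,u) + 1 > d(v,u)$, showing that $u$ is not maximally distant from $v$ and hence $u$ and $v$ are not MMD. For the converse, suppose $u$ and $v$ are both leaves. Then the unique neighbor $x_0$ of $u$ lies on the unique $u$-$v$ geodesic, so $d(v,x_0) = d(v,u) - 1 < d(v,u)$; since $u$ has no other neighbors, $u$ MD $v$, and by symmetry $v$ MD $u$.

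Once this characterization is in hand, the strong resolving graph $T_{SR}$ is the disjoint union of a clique $K_\ell$ on the $\ell$ leaves of $T$ together with $|V(T)| - \ell$ isolated vertices corresponding to the internal vertices. A minimum vertex cover of this graph has size $\alpha(K_\ell) = \ell - 1$, since isolated vertices contribute nothing. Applying Theorem~\ref{G_SR} then yields $\beta_s(T) = \ell - 1$, and since this expresses $\beta_s(T)$ as a function of the number of leaves, both directions of the biconditional follow at once. No significant obstacle is anticipated: the only non-trivial step is the MMD characterization, and the unique-path property of trees makes it routine.
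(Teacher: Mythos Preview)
Your argument is correct. The MMD characterization for trees is exactly right: an internal vertex $u$ always has a neighbor off the unique $u$--$v$ path, which is strictly farther from $v$, so only leaf pairs can be MMD; and any two leaves are MMD since each has its sole neighbor on the geodesic toward the other. From there $T_{SR}$ is $K_\ell$ plus isolated vertices, and Theorem~\ref{G_SR} gives $\beta_s(T)=\alpha(T_{SR})=\ell-1$.

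As for comparison: the paper does not supply its own proof of this statement. It is quoted from \cite{SeboTannier2004} as a known result and used as a black box in Section~\ref{specialclasses}. Note that the original Seb\H{o}--Tannier argument predates the strong resolving graph machinery of Theorem~\ref{G_SR} (which is from \cite{OellermannPetersFransen2007}), so their proof necessarily proceeds differently---directly analyzing which leaf sets form strong resolving sets. Your route through $T_{SR}$ is a natural and economical alternative given the tools already established in this paper, and it has the advantage of making the structure of all minimum strong bases transparent (any $\ell-1$ of the leaves).
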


\noindent To prove that the threshold strong dimension, for a tree $T$ with strong dimension 3 or 4,  is $2$, we describe a $W$-resolved embedding of $T$ in $P^{\boxtimes,2}_{D+1}$ where $W$ is a set of two vertices of $T$ and $D$ is the diameter of $T$. By observing that these embeddings are isometric subgraphs of $P^{\boxtimes,2}_{D+1}$  the result follows from Theorem \ref{strong_resolving_characterization}. In order to describe these embeddings, we define two useful notions. We assume, as before that the vertices of the path $P_{D+1}$ have been labeled  $0,1, \ldots, D$ such that $i$ and $i+1$ are adjacent for $0 \le i <D$.

\begin{definition}~\\
\begin{enumerate}
\item $\bullet$ A {\em northwest-southeast} (abbreviated  NW-SE) {\em diagonal} of $P^{\boxtimes,2}_{D+1}$ is the subpath induced by $\{(x,y): x+y=k\}$ for some integer $0 < k \le D$, i.e., it is the unique $(0,k)$--$(k,0)$ geodesic  for some integer $0 < k \le D$, and \\
$\bullet$ a {\em southwest-northeast} (abbreviated SW-NE) {\em diagonal} of $P^{\boxtimes,2}_{D+1}$ is the subpath induced by $\{(x,y): x-y = k\}$ or $\{(x,y): x-y = -k\}$ for some integer $0 \le k < D$, i.e., it is the unique geodesic passing either through $(k,0)$ and $(D, D-k)$ or through $(0,k)$ and $(D-k, D)$, respectively for some integer $0 \le k < D$.
\item $\bullet$ If $(x,y)$ is a vertex on a NW-SE diagonal $Q$,  then the vertices $(a,b)$ of $Q$ satisfying $a<x$ will be referred to as the vertices of $Q$ that lie NW of $(x,y)$ while those vertices $(a,b)$ on $Q$ satisfying $a>x$ will be referred to as the vertices of $Q$ that lie SE of $(x,y)$.\\
$\bullet$ If $(x,y)$ is a vertex on a SW-NE diagonal $Q$, then the vertices $(a,b)$ of $Q$ satisfying $b<y$ will be referred to as the vertices of $Q$ that lie SW of $(x,y)$ while those vertices $(a,b)$ of $Q$ satisfying $b>y$ will be referred to as the vertices of $Q$ that lie NE of $(x,y)$.

\end{enumerate}

\end{definition}

\noindent Note that for every vertex in $P^{\boxtimes,2}_{D+1}$, there is a unique NW-SE diagonal and a unique SW-NE diagonal that passes through it. In the sequel, when illustrating an embedding $\varphi(T)$ of a given tree $T$  in  $P^{\boxtimes,2}_{D+1}$, the solid black edges correspond to the edges of $T$ while the dashed black edges are the edges that are added to $T$ to obtain the embedding.  When illustrating an embedding of  a tree $T$ of diameter $D$, as described in the proofs of our theorems, we may occasionally embed  $T$ in $P^{\boxtimes,2}_{m}$ where $m < D+1$ provided $P^{\boxtimes,2}_{m}$ admits the described embedding.

	\begin{theorem}  \label{strongdim3tree_has_threshold2}
	If $T$ is a tree with $\beta_s(T)=3$, then $\tau_s(T)=2$.
	\end{theorem}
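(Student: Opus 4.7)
The plan is to invoke Corollary~\ref{ThresholdStrongEmbedding}: to show $\tau_s(T)\le 2$ it suffices to produce a two-element set $W\subseteq V(T)$ and a $W$-resolved embedding $\varphi(T)$ of $T$ in $P_{D+1}^{\boxtimes,2}$ (where $D=\mathrm{diam}(T)$) that is an isometric subgraph of $P_{D+1}^{\boxtimes,2}$. The matching lower bound $\tau_s(T)\ge 2$ is immediate: by Theorem~\ref{StrongdimTrees} a tree with $\beta_s(T)=3$ has exactly four leaves and therefore is not a path, while the only connected graphs of strong dimension $1$ are paths.

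I would begin by classifying trees with four leaves via the handshake identity: either (i)~$T$ has a unique branch vertex $v$ of degree $4$ with four pendant paths of lengths $a_1\le a_2\le a_3\le a_4$ ending in leaves $l_1,l_2,l_3,l_4$ (the \emph{spider} case), or (ii)~$T$ has exactly two branch vertices $v_1,v_2$, both of degree $3$, joined by a path of some length $m\ge 1$, with pendant paths of lengths $a_1\le a_2$ off $v_1$ (ending in $l_1,l_2$) and $b_1\le b_2$ off $v_2$ (ending in $l_3,l_4$), and all other internal vertices of degree $2$ (the \emph{double broom} case). Each case is treated by an explicit embedding.

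In the spider case I would take $W=\{l_3,l_4\}$, so $D=a_3+a_4$, and place $\varphi(l_4)=(0,D)$, $\varphi(l_3)=(D,0)$, and $\varphi(v)=(a_4,a_3)$. The unique $l_3$--$l_4$ geodesic of $T$ is then embedded along the main NW-SE diagonal of the grid, and the path from $v$ to $l_1$ is embedded along the SW-NE diagonal through $\varphi(v)$ by sending the vertex at distance $i$ from $v$ to $(a_4+i,a_3+i)$, so that $\varphi(l_1)=(a_4+a_1,a_3+a_1)$; the $l_2$-branch is placed along a nearby NE, N, or bent path depending on the available neighbours of $\varphi(v)$ and the parity/size of $a_2$. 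In the double broom case I would take $W=\{l_2,l_4\}$ (two leaves realising a diameter), place the $l_2$--$l_4$ geodesic along an appropriate NW-SE diagonal (possibly by ``folding'' the tree, i.e.\ placing $\varphi(l_2)$ and $\varphi(l_4)$ close together so that the two halves of the diameter wrap around and the induced subgraph carries the shortcut edge between them), and then place the $v_1$--$l_1$ and $v_2$--$l_3$ paths along SW-NE diagonals through $\varphi(v_1)$ and $\varphi(v_2)$, again adjusted (NE, N, E, or bent) depending on the relative sizes of $a_1,a_2,b_1,b_2,m$.

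The main obstacle will be verifying the isometric condition. The placement makes it straightforward to check that tree edges map to grid-adjacent pairs (so $\varphi$ is an embedding) and that the coordinates of each $\varphi(v)$ equal $(d_{\varphi(T)}(v,w_1),d_{\varphi(T)}(v,w_2))$ (so $\varphi$ is $W$-resolved). Isometricity, however, requires that for every pair of image vertices a grid-geodesic be realisable inside the induced subgraph $\varphi(T)$, and in the double broom case the pairs consisting of one vertex on the $v_1$-side and one on the $v_2$-side have strong-product distance strictly less than their tree distance, forcing us to exhibit the required shortcuts among the image vertices. This is where the specific geometric placement and (in the double broom case) the folding trick become essential; the verification is by a finite case analysis over the structural sub-cases and the parities involved. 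Once isometricity is established, Corollary~\ref{ThresholdStrongEmbedding} delivers $\tau_s(T)\le 2$, and combined with the lower bound we conclude $\tau_s(T)=2$.
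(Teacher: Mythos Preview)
Your lower bound and the four-leaf classification are fine, and for the spider case your idea (diametral path on a NW--SE diagonal, the two extra branches on adjacent SW--NE diagonals) can be pushed through, though you gloss over exactly where the $l_2$-branch sits.

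The genuine gap is the double-broom case. If you place the $l_2$--$l_4$ geodesic along a single NW--SE diagonal and send the $v_1$--$l_1$ and $v_2$--$l_3$ paths along SW--NE diagonals, take the vertex at tree-distance $i$ from $v_1$ on the $l_1$-branch and the vertex at tree-distance $j$ from $v_2$ on the $l_3$-branch: their grid distance is $\max(|m+j-i|,|m+i-j|)$, while the only route in the induced subgraph passes through $v_1$ and $v_2$, giving distance $i+m+j$. Already for $i=j=1$ these differ ($m$ versus $m+2$), so the embedding is not isometric. Bending the branches N or E instead does not help: those directions create shortcuts to the $l_2$- or $l_4$-side of the geodesic that destroy the $W$-resolved property after a couple of steps. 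Your ``folding'' alternative is not specified enough to evaluate; in a $W$-resolved embedding $\varphi(l_2)=(0,d)$ and $\varphi(l_4)=(d,0)$ with $d=d_{\varphi(T)}(l_2,l_4)$, so making them ``close'' forces you to shorten the central path inside $\varphi(T)$---which is exactly the missing idea.

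The paper's construction supplies that idea uniformly for both structural types. Rather than putting the extra branches on SW--NE diagonals, it places \emph{all} vertices on two \emph{adjacent} NW--SE diagonals: the longer pendant paths $y$ and $u$ together with $v_1,v_3,v_5,\dots$ on one diagonal, and the shorter pendant paths $z$ and $x$ together with $v_2,v_4,\dots$ on the neighbouring diagonal. This interleaving of the central path halves its contribution to the coordinate vectors (the added diagonal edges $v_1v_3,v_3v_5,\dots$ become shortcuts in $\varphi(T)$), which is precisely what is needed to make the embedding both $W$-resolved and isometric. Two cases by the parity of the central-path length suffice; no SW--NE placement is used at all.
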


\begin{proof} Let $T$ be a tree with strong dimension 3.  Then, by Theorem \ref{StrongdimTrees}, $T$ has exactly four leaves as shown in Figure \ref{strongdim3} where possibly $k_1=1$.  Let $V(T)=\{v_1,v_2,\ldots,v_{k_1},$ $u_1,u_2,\ldots,u_{k_2},$ $x_1,x_2,\ldots,x_{k_3},$ $y_1,y_2,\ldots,y_{k_4},$ $z_1,z_2,\ldots,z_{k_5}\}$ where $k_i \geq 1$ for $i=1,2,3,4,5$, and let $E(T)=\{v_iv_{i+1}:1\leq i\leq k_1-1\}\cup \{u_iu_{i+1}:1 \leq i \leq k_2-1\}\cup \{x_ix_{i+1}:1 \leq i \leq k_3-1\}\cup \{y_iy_{i+1}:1 \leq i \leq k_4-1\}\cup \{z_iz_{i+1}:1 \leq i \leq k_5-1\} \cup \{v_1y_1,v_1z_1,v_{k_1}u_1,v_{k_1}x_1\}$, see Figure   \ref{strongdim3}.

We may assume without loss of generality that $k_2 \geq k_3$ and $k_4 \geq k_5$.  Let $k = \lceil \frac{k_1+1}{2} \rceil + k_2+k_4-1$. Let $D$ be the diameter of $T$. To describe an embedding of $T$ in $P^{\boxtimes,2}_{D+1}$  assume the vertices of $P_{D+1}$ have been labeled $0,1, \ldots, D$. We consider two cases based on the parity of $k_1$.  

\noindent{\bf Case 1:}  $k_1$ is odd. In this case we place the vertices $y_{k_4}, y_{k_4-1}, \ldots, y_1$, $v_1, v_3, \ldots, v_{k_1-2},$ $ v_{k_1}$, $u_1, u_2, \ldots, u_{k_2}$ (or $y_{k_4}, y_{k_4-1}, \ldots, y_1$, $v_1$, $u_1, u_2, \ldots, u_{k_2}$  if $k_1=1$) in this order along the NW-SE diagonal through $(0,k)$ and $(k,0)$ starting at $(0,k)$. The remaining vertices are placed along the NW-SE diagonal through $(0,k+1)$ and $(k+1,0)$  by  placing $z_{k_5}$  in position $(k_4-k_5+1, k-(k_4-k_5))$ followed by the remaining vertices $z_{k_5-1}, \ldots z_1$, $v_2, v_4, \ldots, v_{k_1-1}, x_1, x_2, \ldots, x_{k_3}$ in that order along this diagonal SE of $z_{k_5}$ (ending with $x_{k_3}$ in position $(k_4+k_3+\frac{k_1-1}{2}, k+1-(k_4+k_3+\frac{k_1-1}{2}))$). This embedding is illustrated in Figure \ref{Case1}.

\noindent{\bf Case 2:}  $k_1$ is even. In this case we place $y_{k_4}, y_{k_4-1}, \ldots, y_1$, $v_1, v_3, \ldots v_{k_1-1}$, $v_{k_1}$, $u_1, u_2, \ldots, u_{k_2}$ if $k_1 \ge 4$ (or $y_{k_4}, y_{k_4-1}, \ldots, y_1$, $v_1, v_2$, $u_1, u_2, \ldots, u_{k_2}$ if $k_1=2$) in this order along the NW-SE diagonal through $(0,k)$ and $(k,0)$ starting at $(0,k)$. The remaining vertices are again placed along the NW-SE diagonal through $(0,k+1)$ and $(k+1,0)$  by first placing $z_{k_5}$  in position $(k_4-k_5+1, k-(k_4-k_5))$ followed by the remaining vertices $z_{k_5-1}, \ldots z_1$, $v_2, v_4, \ldots, v_{k_1-2}, x_1, x_2, \ldots, x_{k_3}$ if $k_1 \ge 4$  (or $z_{k_5-1}, \ldots z_1$, $ x_1, x_2, \ldots, x_{k_3}$ if $k_1=2$), in that order along this diagonal SE of $z_{k_5}$ (ending with $x_{k_3}$ in position $(k_4+k_3+\frac{k_1-2}{2}, k+1-(k_4+k_3+\frac{k_1-2}{2})$). This embedding is illustrated in Figure \ref{Case2}.

\medskip

\noindent In either case the subgraph of $P^{\boxtimes,2}_{D+1}$  induced by the vertices of this embedding is a $\{y_{k_4}, u_{k_2}\}$-resolved embedding of $T$ in $P^{\boxtimes,2}_{D+1}$  that is also an isometric subgraph of $P^{\boxtimes,2}_{D+1}$ . Thus $\tau_s(T) =2$.
\end{proof}

\medskip

\begin{figure}[h]
\centering
\begin{tikzpicture}[scale=.7,every node/.style={draw,shape=circle,outer sep=2pt,inner sep=1pt,minimum
	size=.2cm}]

\foreach \j in {1, ..., 2}{
	\node[fill=black]  (\j1) at (-1-\j,1) {};
	\node[draw=none] at (-1-\j, 1.5){\small{$z_{\j}$}};
}
\foreach \j in {1,...,2}{
	\node[fill=black]  (\j0) at (\j-2,0) {};
	\node[draw=none] at (\j-2, -0.5){\small{$v_{\j}$}};
}

\foreach \j in {1, ..., 2}{
	\node[fill=black]  (\j2) at (-1-\j,-1) {};
	\node[draw=none] at (-1-\j, -1.5){\small{$y_{\j}$}};
}
\foreach \j in {1,...,2}{
	\node[fill=black]  (\j3) at (\j+2,1) {};
	\node[draw=none] at (\j+2, 1.5){\small{$x_{\j}$}};
}
\foreach \j in {1,...,2}{
	\node[fill=black]  (\j4) at (\j+2,-1) {};
	\node[draw=none] at (\j+2, -1.5){\small{$u_{\j}$}};
}
\node[fill=black]  (40) at (2,0) {};
\node[draw=none] at (2, -0.5){\small{$v_{k_1}$}};
\node[fill=black]  (41) at (-5,1) {};
\node[draw=none] at (-5, 1.5){\small{$z_{k_5}$}};
\node[fill=black]  (42) at (-5,-1) {};
\node[draw=none] at (-5, -1.5){\small{$y_{k_5}$}};
\node[fill=black]  (62) at (-7,-1) {};
\node[draw=none] at (-7, -1.5){\small{$y_{k_4}$}};
\node[fill=black]  (43) at (6,1) {};
\node[draw=none] at (6, 1.5){\small{$x_{k_3}$}};
\node[fill=black]  (44) at (6,-1) {};
\node[draw=none] at (6, -1.5){\small{$u_{k_3}$}};
\node[fill=black]  (64) at (8,-1) {};
\node[draw=none] at (8, -1.5){\small{$u_{k_2}$}};

\draw[thick] (21)--(11)--(10)--(20);
\draw[thick] (40)--(13)--(23);
\draw[thick] (22)--(12)--(10);
\draw[thick] (40)--(14)--(24);
\draw[thick] (20) -- (0.7, 0);
\draw[thick, dotted] (0.7,0)--(1.3,0);
\draw[thick] (1.3,0)--(40);
\draw[thick] (41) -- (-4.3, 1);
\draw[thick, dotted] (-4.3,1)--(-3.7,1);
\draw[thick] (-3.7,1)--(21);
\draw[thick] (42) -- (-4.3, -1);
\draw[thick, dotted] (-4.3,-1)--(-3.7,-1);
\draw[thick] (-3.7,-1)--(22);

\draw[thick] (62) -- (-6.3, -1);
\draw[thick, dotted] (-6.3,-1)--(-5.7,-1);
\draw[thick] (-5.7,-1)--(42);

\draw[thick] (23) -- (4.7, 1);
\draw[thick, dotted] (4.7,1)--(5.4,1);
\draw[thick] (5.4,1)--(43);

\draw[thick] (24) -- (4.7, -1);
\draw[thick, dotted] (4.7,-1)--(5.4,-1);
\draw[thick] (5.4,-1)--(44);

\draw[thick] (44) -- (6.7, -1);
\draw[thick, dotted] (6.7,-1)--(7.4,-1);
\draw[thick] (7.4,-1)--(64);

\end{tikzpicture}

\caption{A tree with strong dimension 3} \label{strongdim3}

\end{figure}
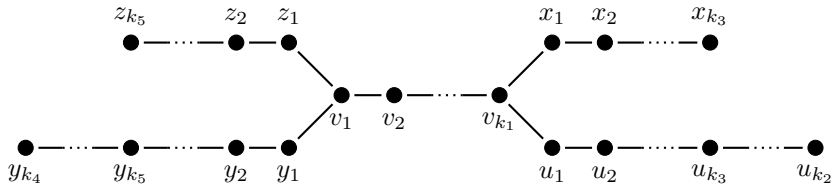

\medskip

\begin{figure} [h] \label{Case1}
	\begin{minipage}[c]{0.5\textwidth}
		\centering
		\begin{tikzpicture}[scale=.7,every node/.style={draw,shape=circle,outer sep=2pt,inner sep=1pt,minimum
			size=.2cm}]
		
		\foreach \j in {1, ..., 3}{
			\node[fill=black]  (\j1) at (-1-\j,1) {};
			\node[draw=none] at (-1-\j, 1.5){\small{$z_{\j}$}};
		}
		\foreach \j in {1,...,3}{
			\node[fill=black]  (\j0) at (\j-2,0) {};
			\node[draw=none] at (\j-2, -0.5){\small{$v_{\j}$}};
		}
		
		\foreach \j in {1, ..., 4}{
			\node[fill=black]  (\j2) at (-1-\j,-1) {};
			\node[draw=none] at (-1-\j, -1.5){\small{$y_{\j}$}};
		}
		\foreach \j in {1,...,4}{
			\node[fill=black]  (\j3) at (\j+1,1) {};
			\node[draw=none] at (\j+1, 1.5){\small{$x_{\j}$}};
		}
		\foreach \j in {1,...,4}{
			\node[fill=black]  (\j4) at (\j+1,-1) {};
			\node[draw=none] at (\j+1, -1.5){\small{$u_{\j}$}};
		}
		
		
		\draw[thick] (31)--(11)--(10)--(30)--(13)--(43);
		\draw[thick] (42)--(12)--(10);
		\draw[thick] (30)--(14)--(44);
		
		
		\end{tikzpicture}

	\end{minipage}%
	\begin{minipage}[c]{0.5\textwidth}
		\centering
		\begin{tikzpicture}[scale=.7,every node/.style={draw,shape=circle,outer sep=2pt,inner sep=1pt,minimum
			size=.2cm}]
		
		\foreach \x in {0,...,9}
		\foreach \y in {0,...,9}
		{
			\vertex[opacity=0.25]  (\x\y) at (\x,\y) {};
		}
		\foreach \x in {0,...,8}
		\foreach \y in {0,...,8}
		{
			\pgfmathtruncatemacro{\a}{\x+1}
			\pgfmathtruncatemacro{\b}{\y+1}
			\path[opacity=0.25]
			(\x\y) edge (\a\b)
			(\x\b) edge (\a\y);
		}
		\foreach \x in {0,...,8}
		\foreach \y in {0,...,9}
		{
			\pgfmathtruncatemacro{\a}{\x+1}
			\path[opacity=0.25]
			(\x\y) edge (\a\y)
			(\y\x) edge (\y\a);
		}

		\foreach \j in {0,...,9}{
			\pgfmathtruncatemacro{\a}{9-\j}
			\node[fill=black]  (\j\a) at (\j,\a) {};
		}
		\foreach \j in {2,...,9}{
			\pgfmathtruncatemacro{\a}{10-\j}
			\node[fill=black]  (\j\a) at (\j,\a) {};
		}
		
		\draw[thick] (0,9) --(4,5);
		\draw[thick] (5,4)--(9,0);
		\draw[thick] (4,5)--(5,5) --(5,4);
		\draw[thick] (2,8)--(4,6) --(4,5);
		\draw[thick] (5,4) -- (6,4) -- (9,1);
		
		\draw [dashed] (1,8) -- (2,8) --(2,7)--(3,7) -- (3,6) --(4,6);
		\draw [dashed] (6,4) -- (6,3) -- (7,3) -- (7,2) -- (8,2) --(8,1)--(9,1) -- (9,0);
		\draw [dashed] (4,6) -- (6,4);
		\draw [dashed] (4,5) -- (5,4);
		
		\node[draw=none] at (-0.5, 9){\small{$y_4$}};
		\node[draw=none] at (0.5, 8){\small{$y_3$}};
		\node[draw=none] at (2.5, 8){\small{$z_3$}};
		\node[draw=none] at (1.5, 7){\small{$y_2$}};
		\node[draw=none] at (3.5, 7){\small{$z_2$}};
		\node[draw=none] at (2.5, 6){\small{$y_1$}};
		\node[draw=none] at (4.5, 6){\small{$z_1$}};
		\node[draw=none] at (3.5, 5){\small{$v_1$}};
		\node[draw=none] at (5.5, 5){\small{$v_2$}};
		\node[draw=none] at (4.5, 4){\small{$v_3$}};
		\node[draw=none] at (6.5, 4){\small{$x_1$}};
		
		\node[draw=none] at (5.5, 3){\small{$u_1$}};
		\node[draw=none] at (7.5, 3){\small{$x_2$}};
		\node[draw=none] at (6.5, 2){\small{$u_2$}};
		\node[draw=none] at (8.5, 2){\small{$x_3$}};
		\node[draw=none] at (7.5, 1){\small{$u_3$}};
		\node[draw=none] at (9.5, 1){\small{$x_4$}};
		\node[draw=none] at (8.5, 0){\small{$u_4$}};
		\end{tikzpicture}
	\end{minipage}
\caption{Illustrating the embedding of a tree  in $P^{\boxtimes, 2}_{10}$ \\  as described in Case 1 of Theorem \ref{strongdim3tree_has_threshold2}} \label{Case1}
\end{figure}

\bigskip

\begin{figure} [h] \label{Case2}
	\begin{minipage}[c]{0.5\textwidth}
		\centering
		\begin{tikzpicture}[scale=.7,every node/.style={draw,shape=circle,outer sep=2pt,inner sep=1pt,minimum
			size=.2cm}]
		
		\foreach \j in {1, ..., 2}{
			\node[fill=black]  (\j1) at (-1-\j,1) {};
			\node[draw=none] at (-1-\j, 1.5){\small{$z_{\j}$}};
		}
		\foreach \j in {1,...,4}{
			\node[fill=black]  (\j0) at (\j-2,0) {};
			\node[draw=none] at (\j-2, -0.5){\small{$v_{\j}$}};
		}
		
		\foreach \j in {1, ..., 3}{
			\node[fill=black]  (\j2) at (-1-\j,-1) {};
			\node[draw=none] at (-1-\j, -1.5){\small{$y_{\j}$}};
		}
		\foreach \j in {1,...,2}{
			\node[fill=black]  (\j3) at (\j+2,1) {};
			\node[draw=none] at (\j+2, 1.5){\small{$x_{\j}$}};
		}
		\foreach \j in {1,...,2}{
			\node[fill=black]  (\j4) at (\j+2,-1) {};
			\node[draw=none] at (\j+2, -1.5){\small{$u_{\j}$}};
		}
		
		\draw[thick] (21)--(11)--(10)--(40)--(13)--(23);
		\draw[thick] (32)--(12)--(10);
		\draw[thick] (40)--(14)--(24);
		
		\end{tikzpicture}
	\end{minipage}%
	\begin{minipage}[c]{0.5\textwidth}
		\centering
		\begin{tikzpicture}[scale=.7,every node/.style={draw,shape=circle,outer sep=2pt,inner sep=1pt,minimum
			size=.2cm}]
		
		\foreach \x in {0,...,7}
		\foreach \y in {0,...,7}
		{
			\vertex[opacity=0.25]  (\x\y) at (\x,\y) {};
		}
		\foreach \x in {0,...,6}
		\foreach \y in {0,...,6}
		{
			\pgfmathtruncatemacro{\a}{\x+1}
			\pgfmathtruncatemacro{\b}{\y+1}
			\path[opacity=0.25]
			(\x\y) edge (\a\b)
			(\x\b) edge (\a\y);
		}
		\foreach \x in {0,...,6}
		\foreach \y in {0,...,7}
		{
			\pgfmathtruncatemacro{\a}{\x+1}
			\path[opacity=0.25]
			(\x\y) edge (\a\y)
			(\y\x) edge (\y\a);
		}

		\foreach \j in {0,...,7}{
			\pgfmathtruncatemacro{\a}{7-\j}
			\node[fill=black]  (\j\a) at (\j,\a) {};
		}
		\foreach \j in {2,...,6}{
			\pgfmathtruncatemacro{\a}{8-\j}
			\node[fill=black]  (\j\a) at (\j,\a) {};
		}
		
		\draw[thick] (0,7) --(3,4);
		\draw[thick] (4,3)--(7,0);
		\draw[thick] (3,4)--(4,4) --(4,3);
		\draw[thick] (2,6)--(3,5) -- (3,4);
		\draw[thick] (5,2)--(5,3) -- (6,2);
		%
		\draw [dashed] (1,6) -- (2,6) --(2,5)--(3,5);
		\draw [dashed] (5,2)--(6,2) -- (6,1);
		\draw [dashed] (3,5) -- (5,3);
		\draw [dashed] (3,4) -- (4,3)--(5,3);
		
		\node[draw=none] at (-0.5, 7){\small{$y_3$}};
		\node[draw=none] at (0.5, 6){\small{$y_2$}};
		\node[draw=none] at (2.5, 6){\small{$z_2$}};
		\node[draw=none] at (1.5, 5){\small{$y_1$}};
		\node[draw=none] at (3.5, 5){\small{$z_1$}};
		\node[draw=none] at (2.5, 4){\small{$v_1$}};
		\node[draw=none] at (4.5, 4){\small{$v_2$}};
		\node[draw=none] at (3.5, 3){\small{$v_3$}};
		\node[draw=none] at (5.5, 3){\small{$x_1$}};
		\node[draw=none] at (4.5, 2){\small{$v_4$}};
		\node[draw=none] at (6.5, 2){\small{$x_2$}};
		\node[draw=none] at (5.5, 1){\small{$u_1$}};
		\node[draw=none] at (6.5, 0){\small{$u_2$}};
		\end{tikzpicture}	
	\end{minipage}	
\caption{Illustrating the embedding of a tree in $P^{\boxtimes, 2}_{8}$ \\ as described in Case 2 of Theorem \ref{strongdim3tree_has_threshold2}} \label{Case2}
\end{figure}

\begin{theorem}  \label{strongdim4tree_has_threshold2}
If $T$ is a tree with $\beta_s(T)=4$, then $\tau_s(T)=2$.
\end{theorem}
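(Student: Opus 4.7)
Since $\beta_s(T)=4$, Theorem \ref{StrongdimTrees} gives that $T$ has exactly $5$ leaves, so $T$ is not a path and hence $\tau_s(T)\ge 2$. To obtain the reverse inequality I would, in the spirit of the proof of Theorem \ref{strongdim3tree_has_threshold2}, construct for every such tree $T$ a $\{w_1,w_2\}$-resolved embedding of $T$ in $P^{\boxtimes,2}_{D+1}$, where $D=\mathrm{diam}(T)$, that is at the same time an isometric subgraph, and then invoke Theorem \ref{strong_resolving_characterization}.

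First I would classify the trees $T$ with exactly five leaves by passing to the reduced tree obtained from $T$ by suppressing all degree-$2$ vertices. A degree-sum argument on the reduced tree shows that the number $k$ of its branching vertices satisfies $1\le k\le 3$, giving exactly three topologies: (i) $k=1$, a subdivided star with one branching vertex of degree $5$ and five leaf arms; (ii) $k=2$, a double spider with branching vertices $b_1,b_2$ of degrees $3$ and $4$ joined by a backbone path, carrying two and three leaf arms respectively; and (iii) $k=3$, a caterpillar whose three branching vertices $b_1,b_2,b_3$ of degree $3$ lie consecutively along a backbone, with $b_1$ and $b_3$ each carrying two leaf arms and $b_2$ carrying a single leaf arm.

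For each of these topologies I would take $w_1$ and $w_2$ to be two leaves at the ends of a diametral path of $T$, so that the unique $w_1$-$w_2$ geodesic together with the two arms that terminate at $w_1$ and $w_2$ sits along a single NW-SE diagonal of $P^{\boxtimes,2}_{D+1}$, exactly as in the proof of Theorem \ref{strongdim3tree_has_threshold2}. The remaining three arms are then distributed on the adjacent NW-SE diagonal, one unit offset from the main diagonal, by subcases on the parities of the backbone and arm lengths, analogous to Cases~1 and~2 of Theorem \ref{strongdim3tree_has_threshold2}. In (i) the three non-diametral arms fan out alternately on one side of the centre; in (ii) they are partitioned between the two branching vertices according to which one carries the additional arm; and in (iii) the two side arms at $b_1$ and $b_3$ sit alongside the backbone, and the single arm at $b_2$ is threaded into the available gap between them on the offset diagonal. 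In each case the coordinates are defined so that the two entries of $\varphi(v)$ are exactly $d_T(v,w_1)$ and $d_T(v,w_2)$, which immediately makes $\varphi$ a $W$-resolved embedding, while $\varphi(T)$ is isometric in $P^{\boxtimes,2}_{D+1}$ because between any two embedded vertices $a$ and $b$ a geodesic can be realised either along the unique $a$-$b$ path in $T$ or along the added grid edges that lie on the two diagonals, whichever is shorter, and both routes are available in $\varphi(T)$.

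The main obstacle is case (iii): with three distinct branching vertices but only two usable diagonals, the offset diagonal has to be partitioned into three intervals for the arms at $b_1$, $b_2$ and $b_3$, and one must verify that, for every admissible choice of the backbone segment lengths and arm lengths, these intervals fit inside the available diagonal without collision. This will require a secondary case split on the parities of the two backbone segments between the $b_i$'s and on whether the middle arm is placed NW or SE of $b_2$. Once each subcase is drawn, the verification that the construction is $W$-resolved and isometric is routine and directly parallels the corresponding verifications in the proof of Theorem \ref{strongdim3tree_has_threshold2}.
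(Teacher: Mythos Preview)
Your proposal has a genuine gap. The central claim---that one may take $\varphi(v)=(d_T(v,w_1),d_T(v,w_2))$---fails outright in your cases (i) and (ii): whenever two of the non-diametral arms share a branching vertex $b$, a vertex at distance $d$ from $b$ on either arm has the same pair of $T$-distances to $w_1$ and $w_2$, so the map is not even injective. Even in case (iii) it does not give an isometric embedding: with those coordinates each arm lies on the SW-NE diagonal through its branching vertex (not on ``the adjacent NW-SE diagonal, one unit offset''; that description is inconsistent with your coordinate formula), and if two branching vertices are at distance $\ge 2$ on the spine the induced subgraph contains no shortcut between their arms, so the $\varphi(G)$-distance between two arm vertices at equal height strictly exceeds their $P^{\boxtimes,2}$-distance. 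You have also misread Theorem~\ref{strongdim3tree_has_threshold2}: there the $w_1$--$w_2$ geodesic does \emph{not} lie on a single diagonal---the central path $v_1,\dots,v_{k_1}$ zigzags, with odd-indexed $v_i$ on the main diagonal and even-indexed $v_i$ on the offset diagonal, and the coordinates used are not $T$-distances.

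The paper's proof avoids your three-way split with one uniform construction. It deletes a single pendent path $t_1,\dots,t_{k_6}$ (attached at some $v_{k_7}$ with $1\le k_7\le k_1$; the position of $k_7$ and whether $k_1=1$ already encode all three of your topologies) to obtain a four-leaf tree $T'$, embeds $T'$ verbatim via the two-diagonal zigzag of Theorem~\ref{strongdim3tree_has_threshold2}, and then lays the fifth arm along the SW-NE diagonal through the image of $v_{k_7}$, heading NE. Using this perpendicular direction for the extra arm is the key idea you are missing; once it is in place the $\{y_{k_4},u_{k_2}\}$-resolved and isometric verifications are immediate.
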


\begin{proof}

Let $T$ be a tree with strong dimension 4.  Then $T$ has exactly five end-vertices, see Figure \ref{strongdimension4}. Without loss of generality, we may assume $V(T)=\{t_1,t_2,\ldots, t_{k_6},$ $v_1,v_2,\ldots,v_{k_7},\ldots,v_{k_1},$ $u_1,u_2,\ldots,u_{k_2},$ $x_1,x_2,\ldots,x_{k_3},$ $y_1,y_2,\ldots,y_{k_4},$ $z_1,z_2,\ldots,z_{k_5}\}$ where $k_i \geq 1$ for $i=1,2,3,4,5,6,7$ and $k_7 \leq k_1$, and $E(T)=\{t_it_{i+1}:1\leq i \leq k_6-1\} \cup \{v_iv_{i+1}:1\leq i\leq k_1-1\}\cup \{u_iu_{i+1}:1 \leq i \leq k_2-1\}\cup \{x_ix_{i+1}:1 \leq i \leq k_3-1\}\cup \{y_iy_{i+1}:1 \leq i \leq k_4-1\}\cup \{z_iz_{i+1}:1 \leq i \leq k_5-1\} \cup \{v_1y_1,v_1z_1,v_{k_1}u_1,v_{k_1}x_1, v_{k_7}t_1\}$.   Thus by deleting the path $t_1,t_2, \ldots, t_{k_6}$ from $T$ we obtain a tree, call it $T'$, with four leaves that appears like the tree shown in Figure \ref{strongdim3}.

Depending on whether $k_1$ is odd or even, we now embed $T'$ in  $P^{\boxtimes, 2}_{D+1}$ (where $D$ is the diameter of $T$) as described in Case 1 or 2, respectively, of Theorem \ref{strongdim3tree_has_threshold2}. To complete the embedding of $T$, we now embed the path $t_1,t_2, \ldots, t_{k_6}$ in the SW-NE diagonal that passes through $v_{k_7}$ along the portion NE of $v_{k_7}$. Figures \ref{example1} and \ref{example2} illustrate these embeddings for specific trees falling into each of these two cases. The subgraph of $P_{D+1}^{\boxtimes, 2} $  induced by this embedding is a $\{y_{k_4}, u_{k_2}\}$-resolved embedding of $T$ that is also an isometric subgraph of $P_{D+1}^{ \boxtimes, 2} $. Thus $\tau_s(T) =2$.
\end{proof}

\begin{figure}[h] 
	\centering
	\begin{tikzpicture}[scale=.7,every node/.style={draw,shape=circle,outer sep=2pt,inner sep=1pt,minimum
		size=.2cm}]
	
	\foreach \j in {1, ..., 2}{
		\node[fill=black]  (\j1) at (-1-\j,1) {};
		\node[draw=none] at (-1-\j, 1.5){\small{$z_{\j}$}};
	}
	\foreach \j in {1,...,2}{
		\node[fill=black]  (\j0) at (\j-2,0) {};
		\node[draw=none] at (\j-2, -0.5){\small{$v_{\j}$}};
	}
	
	\foreach \j in {1, ..., 2}{
		\node[fill=black]  (\j2) at (-1-\j,-1) {};
		\node[draw=none] at (-1-\j, -1.5){\small{$y_{\j}$}};
	}
	\foreach \j in {1,...,2}{
		\node[fill=black]  (\j3) at (\j+4,1) {};
		\node[draw=none] at (\j+4, 1.5){\small{$x_{\j}$}};
	}
	\foreach \j in {1,...,2}{
		\node[fill=black]  (\j4) at (\j+4,-1) {};
		\node[draw=none] at (\j+4, -1.5){\small{$u_{\j}$}};
	}
	\foreach \j in {1,...,2}{
		\node[fill=black]  (\j5) at (2,\j) {};
		\node[draw=none] at (2.5, \j){\small{$t_{\j}$}};
	}
	\node[fill=black]  (40) at (2,0) {};
	\node[draw=none] at (2, -0.5){\small{$v_{k_7}$}};
	\node[fill=black]  (60) at (4,0) {};
	\node[draw=none] at (4, -0.5){\small{$v_{k_1}$}};
	\node[fill=black]  (41) at (-5,1) {};
	\node[draw=none] at (-5, 1.5){\small{$z_{k_5}$}};
	\node[fill=black]  (42) at (-5,-1) {};
	\node[draw=none] at (-5, -1.5){\small{$y_{k_5}$}};
	\node[fill=black]  (62) at (-7,-1) {};
	\node[draw=none] at (-7, -1.5){\small{$y_{k_4}$}};
	\node[fill=black]  (43) at (8,1) {};
	\node[draw=none] at (8, 1.5){\small{$x_{k_3}$}};
	\node[fill=black]  (44) at (8,-1) {};
	\node[draw=none] at (8, -1.5){\small{$u_{k_3}$}};
	\node[fill=black]  (64) at (10,-1) {};
	\node[draw=none] at (10, -1.5){\small{$u_{k_2}$}};
	\node[fill=black]  (45) at (2,4) {};
	\node[draw=none] at (2.5, 4){\small{$t_{k_6}$}};
	
	\draw[thick] (21)--(11)--(10)--(20);
	\draw[thick] (60)--(13)--(23);
	\draw[thick] (22)--(12)--(10);
	\draw[thick] (60)--(14)--(24);
	\draw[thick] (40) -- (25);
	
	\draw[thick] (20) -- (0.7, 0);
	\draw[thick, dotted] (0.7,0)--(1.3,0);
	\draw[thick] (1.3,0)--(40);
	
	\draw[thick] (40) -- (2.7, 0);
	\draw[thick, dotted] (2.7,0)--(3.3,0);
	\draw[thick] (3.3,0)--(60);
	\draw[thick] (41) -- (-4.3, 1);
	\draw[thick, dotted] (-4.3,1)--(-3.7,1);
	\draw[thick] (-3.7,1)--(21);
	\draw[thick] (42) -- (-4.3, -1);
	\draw[thick, dotted] (-4.3,-1)--(-3.7,-1);
	\draw[thick] (-3.7,-1)--(22);
	
	\draw[thick] (62) -- (-6.3, -1);
	\draw[thick, dotted] (-6.3,-1)--(-5.7,-1);
	\draw[thick] (-5.7,-1)--(42);
	
	\draw[thick] (23) -- (6.7, 1);
	\draw[thick, dotted] (6.7,1)--(7.4,1);
	\draw[thick] (7.4,1)--(43);
	
	\draw[thick] (24) -- (6.7, -1);
	\draw[thick, dotted] (6.7,-1)--(7.4,-1);
	\draw[thick] (7.4,-1)--(44);
	
	\draw[thick] (44) -- (8.7, -1);
	\draw[thick, dotted] (8.7,-1)--(9.4,-1);
	\draw[thick] (9.4,-1)--(64);
	
	\draw[thick] (25) -- (2,2.7);
	\draw[thick, dotted] (2,2.7)--(2,3.3);
	\draw[thick] (2,3.3)--(45);
	\end{tikzpicture}
\caption{A tree with strong dimension 4} \label{strongdimension4}
\end{figure}
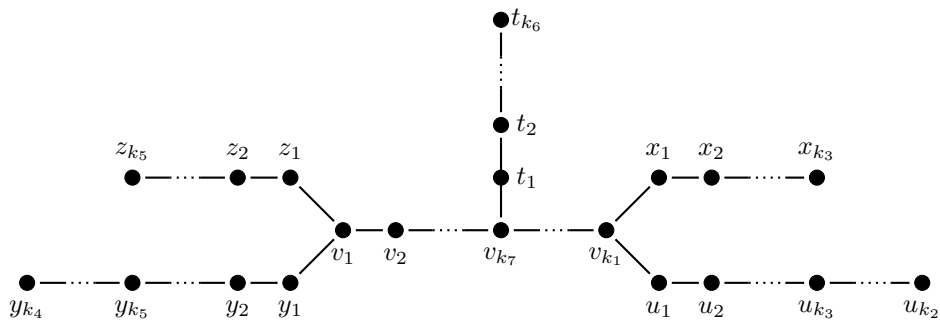 

\medskip

\begin{figure}[h]
	\begin{minipage}[c]{0.5\textwidth}
		\centering
		\begin{tikzpicture}[scale=.7,every node/.style={draw,shape=circle,outer sep=2pt,inner sep=1pt,minimum
			size=.2cm}]
		
		\foreach \j in {1, ..., 2}{
			\node[fill=black]  (\j1) at (-1-\j,1) {};
			\node[draw=none] at (-1-\j, 1.5){\small{$z_{\j}$}};
		}
		\foreach \j in {1,...,3}{
			\node[fill=black]  (\j0) at (\j-2,0) {};
			\node[draw=none] at (\j-2, -0.5){\small{$v_{\j}$}};
		}
		
		\foreach \j in {1, ..., 3}{
			\node[fill=black]  (\j2) at (-1-\j,-1) {};
			\node[draw=none] at (-1-\j, -1.5){\small{$y_{\j}$}};
		}
		\foreach \j in {1,...,2}{
			\node[fill=black]  (\j3) at (\j+1,1) {};
			\node[draw=none] at (\j+1, 1.5){\small{$x_{\j}$}};
		}
		\foreach \j in {1,...,2}{
			\node[fill=black]  (\j4) at (\j+1,-1) {};
			\node[draw=none] at (\j+1, -1.5){\small{$u_{\j}$}};
		}
		
		\foreach \j in {1,...,3}{
			\node[fill=black]  (\j5) at (0,\j) {};
			\node[draw=none] at (0.5, \j){\small{$t_{\j}$}};
		}
		
		\draw[thick] (21)--(11)--(10)--(30)--(13)--(23);
		\draw[thick] (32)--(12)--(10);
		\draw[thick] (30)--(14)--(24);
		\draw[thick] (20)--(35);	
		\end{tikzpicture}
	\end{minipage}%
	\begin{minipage}[c]{0.5\textwidth}
		\centering
		\begin{tikzpicture}[scale=.7,every node/.style={draw,shape=circle,outer sep=2pt,inner sep=1pt,minimum
			size=.2cm}]
		
		\foreach \x in {0,...,7}
		\foreach \y in {0,...,7}
		{
			\vertex[opacity=0.25]  (\x\y) at (\x,\y) {};
		}
		\foreach \x in {0,...,6}
		\foreach \y in {0,...,6}
		{
			\pgfmathtruncatemacro{\a}{\x+1}
			\pgfmathtruncatemacro{\b}{\y+1}
			\path[opacity=0.25]
			(\x\y) edge (\a\b)
			(\x\b) edge (\a\y);
		}
		\foreach \x in {0,...,6}
		\foreach \y in {0,...,7}
		{
			\pgfmathtruncatemacro{\a}{\x+1}
			\path[opacity=0.25]
			(\x\y) edge (\a\y)
			(\y\x) edge (\y\a);
		}
		
		\foreach \y in {0,...,7}
		{
			\draw[opacity=0.25]  (6.18,\y) -- (6.82,\y);
		}
		\foreach \y in {0,...,6}
		{
			\draw[opacity=0.25]  (7,\y+0.18) -- (7,\y+0.82);
			\draw[opacity=0.25]  (6.18,\y+0.18) -- (6.82,\y+0.82);
			\draw[opacity=0.25]	(6.18,\y+0.82) -- (6.82,\y+0.18);	
		}
		
		\foreach \j in {0,...,6}{
			\pgfmathtruncatemacro{\a}{6-\j}
			\node[fill=black]  (\j\a) at (\j,\a) {};
		}
		\foreach \j in {2,...,6}{
			\pgfmathtruncatemacro{\a}{7-\j}
			\node[fill=black]  (\j\a) at (\j,\a) {};
		}
		
		\foreach \j in {5,...,7}{
			\pgfmathtruncatemacro{\a}{\j-1}
			\node[fill=black]  (\j\a) at (\j,\a) {};
		}
		
		\draw[thick] (0,6) --(3,3);
		\draw[thick] (4,2)--(6,0);
		\draw[thick] (3,3)--(4,3) --(4,2)--(5,2)--(6,1);
		\draw[thick] (2,5)--(3,4) -- (3,3);
		\draw[thick] (4,3)--(7,6);
		\draw [dashed] (1,5) -- (2,5) --(2,4)--(3,4)--(5,2);
		\draw [dashed] (5,2)--(5,1) -- (6,1)--(6,0);
		\draw [dashed] (3,3) -- (4,2);

		\node[draw=none] at (-0.5, 6){\small{$y_3$}};
		\node[draw=none] at (0.5, 5){\small{$y_2$}};
		\node[draw=none] at (2.5, 5){\small{$z_2$}};
		\node[draw=none] at (1.5, 4){\small{$y_1$}};
		\node[draw=none] at (3.5, 4){\small{$z_1$}};
		\node[draw=none] at (2.5, 3){\small{$v_1$}};
		\node[draw=none] at (4.5, 3){\small{$v_2$}};
		\node[draw=none] at (3.5, 2){\small{$v_3$}};
		\node[draw=none] at (5.5, 2){\small{$x_1$}};
		\node[draw=none] at (4.5, 1){\small{$u_1$}};
		\node[draw=none] at (6.5, 1){\small{$x_2$}};
		\node[draw=none] at (5.5, 0){\small{$u_2$}};
		
		\node[draw=none] at (5.5, 4){\small{$t_1$}};
		\node[draw=none] at (6.5, 5){\small{$t_2$}};
		\node[draw=none] at (7.5, 6){\small{$t_3$}};
		\end{tikzpicture}
	\end{minipage}	
\caption{Illustrating an embedding of a tree in $P_8^{ \boxtimes, 2}$ \\
as described in Theorem \ref{strongdim4tree_has_threshold2} if $k_1$ is odd}
\label{example1}
\end{figure}

\medskip

\begin{figure}[h]
		\begin{minipage}[c]{0.5\textwidth}
		\centering
		\begin{tikzpicture}[scale=.7,every node/.style={draw,shape=circle,outer sep=2pt,inner sep=1pt,minimum
			size=.2cm}]
		
		\foreach \j in {1, ..., 2}{
			\node[fill=black]  (\j1) at (-1-\j,1) {};
			\node[draw=none] at (-1-\j, 1.5){\small{$z_{\j}$}};
		}
		\foreach \j in {1,...,4}{
			\node[fill=black]  (\j0) at (\j-2,0) {};
			\node[draw=none] at (\j-2, -0.5){\small{$v_{\j}$}};
		}
		
		\foreach \j in {1, ..., 3}{
			\node[fill=black]  (\j2) at (-1-\j,-1) {};
			\node[draw=none] at (-1-\j, -1.5){\small{$y_{\j}$}};
		}
		\foreach \j in {1,...,2}{
			\node[fill=black]  (\j3) at (\j+2,1) {};
			\node[draw=none] at (\j+2, 1.5){\small{$x_{\j}$}};
		}
		\foreach \j in {1,...,2}{
			\node[fill=black]  (\j4) at (\j+2,-1) {};
			\node[draw=none] at (\j+2, -1.5){\small{$u_{\j}$}};
		}
		\foreach \j in {1,...,2}{
			\node[fill=black]  (\j5) at (2,\j) {};
			\node[draw=none] at (1.5, \j){\small{$t_{\j}$}};
		}
		
		\draw[thick] (21)--(11)--(10)--(40)--(13)--(23);
		\draw[thick] (32)--(12)--(10);
		\draw[thick] (25)--(40)--(14)--(24);
		
		\end{tikzpicture}
	\end{minipage}%
	\begin{minipage}[c]{0.5\textwidth}
		\centering
		\begin{tikzpicture}[scale=.7,every node/.style={draw,shape=circle,outer sep=2pt,inner sep=1pt,minimum
			size=.2cm}]
		
		\foreach \x in {0,...,7}
		\foreach \y in {0,...,7}
		{
			\vertex[opacity=0.25]  (\x\y) at (\x,\y) {};
		}
		\foreach \x in {0,...,6}
		\foreach \y in {0,...,6}
		{
			\pgfmathtruncatemacro{\a}{\x+1}
			\pgfmathtruncatemacro{\b}{\y+1}
			\path[opacity=0.25]
			(\x\y) edge (\a\b)
			(\x\b) edge (\a\y);
		}
		\foreach \x in {0,...,6}
		\foreach \y in {0,...,7}
		{
			\pgfmathtruncatemacro{\a}{\x+1}
			\path[opacity=0.25]
			(\x\y) edge (\a\y)
			(\y\x) edge (\y\a);
		}

		\foreach \j in {0,...,7}{
			\pgfmathtruncatemacro{\a}{7-\j}
			\node[fill=black]  (\j\a) at (\j,\a) {};
		}
		\foreach \j in {2,...,6}{
			\pgfmathtruncatemacro{\a}{8-\j}
			\node[fill=black]  (\j\a) at (\j,\a) {};
		}
		\foreach \j in {1,...,2}{
			\pgfmathtruncatemacro{\a}{\j+2}
			\node[fill=black]  (\j\a) at (\j+5,\a) {};
		}
		
		\draw[thick] (0,7) --(3,4);
		\draw[thick] (4,3)--(7,0);
		\draw[thick] (3,4)--(4,4) --(4,3);
		\draw[thick] (2,6)--(3,5) -- (3,4);
		\draw[thick] (7,4)--(5,2)--(5,3) -- (6,2);
		%
		\draw[dashed] (1,6) -- (2,6) --(2,5)--(3,5);
		\draw[dashed] (5,2)--(6,2) -- (6,1);
		\draw[dashed] (3,5) -- (5,3);
		\draw[dashed] (3,4) -- (4,3)--(5,3)--(6,3)--(6,2);
		
		\node[draw=none] at (-0.5, 7){\small{$y_3$}};
		\node[draw=none] at (0.5, 6){\small{$y_2$}};
		\node[draw=none] at (2.5, 6){\small{$z_2$}};
		\node[draw=none] at (1.5, 5){\small{$y_1$}};
		\node[draw=none] at (3.5, 5){\small{$z_1$}};
		\node[draw=none] at (2.5, 4){\small{$v_1$}};
		\node[draw=none] at (4.5, 4){\small{$v_2$}};
		\node[draw=none] at (3.5, 3){\small{$v_3$}};
		\node[draw=none] at (5.5, 3){\small{$x_1$}};
		\node[draw=none] at (4.5, 2){\small{$v_4$}};
		\node[draw=none] at (6.5, 2){\small{$x_2$}};
		\node[draw=none] at (5.5, 1){\small{$u_1$}};
		\node[draw=none] at (6.5, 0){\small{$u_2$}};
		\node[draw=none] at (6.5, 3){\small{$t_1$}};
		\node[draw=none] at (7.5, 4){\small{$t_2$}};
		\end{tikzpicture}
	\end{minipage}
\caption{Illustrating an embedding of a tree in $P_8^{ \boxtimes, 2}$ \\
as described in Theorem \ref{strongdim4tree_has_threshold2} if $k_1$ is even}
\label{example2}
\end{figure}

It was shown in \cite{MolMurphyOellermann2019} that there are trees of arbitrarily large metric dimension that have threshold dimension 2. In particular it was shown that if $L_{3n}$, for $n \ge 2$, is the tree obtained from a path $P:v_1v_2 \ldots v_n$ by attaching, for each $1 \le i \le n$, two leaves $u_i$ and $w_i$ to $v_i$, then the threshold dimension of $L_{3n}$ is $2$ while the metric dimension is $n$. This was shown by describing a $\{u_1,w_1\}$-resolved embedding in $P_{n+1}^{\boxtimes,2}$. The same embedding also shows that $\tau_s(L_{3n}) = 2$. We briefly describe this embedding here. We again assume that the vertices of $P_{n+1}$ have been labeled $0, 1, \ldots, n$ so that, for $0 \le i <n$, the vertex $i$ is adjacent with $i+1$. Embed the path $v_1v_2 \ldots v_n$ along the SW-NE diagonal passing through $(0,0)$ and $(n,n)$ by placing $v_1$ in position $(1,1)$ and the remaining vertices of $P$ in order along the diagonal NE of $(1,1)$. Next the vertices $u_1, \ldots, u_n$ are placed in order along the SW-NE diagonal through $(0,1)$ and $(n-1,n)$ starting with $u_1$ in position $(0,1)$. Finally the vertices $w_1, w_2, \ldots, w_n$ are placed in order along the SW-NE diagonal through $(1,0)$ and $(n,n-1)$ starting with $w_1$ in position $(1,0)$. The subgraph induced by this embedding is a $\{u_1,w_1\}$-resolved embedding of $L_{3n}$  in $P_{n+1} ^{\boxtimes,2}$ that is an induced subgraph of $P_{n+1}^{ \boxtimes,2} $. Figure \ref{L_12} shows such an embedding for $L_{12}$.

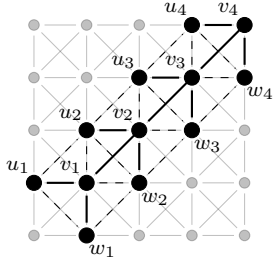
\begin{figure}[t]

		\centering
		
		\begin{tikzpicture}[scale=.7,every node/.style={draw,shape=circle,outer sep=2pt,inner sep=1pt,minimum
	size=.2cm}]

\foreach \x in {0,...,4}
\foreach \y in {0,...,4}
{
	\vertex[opacity=0.25]  (\x\y) at (\x,\y) {};
}
\foreach \x in {0,...,3}
\foreach \y in {0,...,3}
{
	\pgfmathtruncatemacro{\a}{\x+1}
	\pgfmathtruncatemacro{\b}{\y+1}
	\path[opacity=0.25]
	(\x\y) edge (\a\b)
	(\x\b) edge (\a\y);
}
\foreach \x in {0,...,3}
\foreach \y in {0,...,4}
{
	\pgfmathtruncatemacro{\a}{\x+1}
	\path[opacity=0.25]
	(\x\y) edge (\a\y)
	(\y\x) edge (\y\a);
}

\foreach \j in {1,...,4}{
	\pgfmathtruncatemacro{\a}{\j-1}
	\node[fill=black]  (\j\j) at (\j,\j) {};
	\node[fill=black]  (\a\j) at (\a,\j) {};
	\node[fill=black]  (\j\a) at (\j,\a) {};	
}

\foreach \j in {1,...,4}{
	\pgfmathtruncatemacro{\a}{\j-1}
	\pgfmathtruncatemacro{\b}{\j+1}
	\draw[thick] (\a\j) --(\j\j);
	\draw[thick] (\j\a) --(\j\j);
	\draw[dashed] (\a\j)--(\j\a);
}

\foreach \j in {1,...,3}{
	\pgfmathtruncatemacro{\a}{\j-1}
	\pgfmathtruncatemacro{\b}{\j+1}
	\draw[thick] (\j\j) --(\b,\b);
	\draw[dashed] (\a\j) --(\j\b);
	\draw[dashed] (\j\a)--(\b\j);
	\draw[dashed] (\j\j)--(\b\j);
	\draw[dashed] (\j\j)--(\j\b);
}

\node[draw=none]  at (-0.3,1.3) {\scriptsize $u_1$};
\node[draw=none]  at (0.7,2.3) {\scriptsize $u_2$};
\node[draw=none]  at (1.7,3.3) {\scriptsize $u_3$};
\node[draw=none]  at (2.7,4.3) {\scriptsize $u_4$};

\node[draw=none]  at (0.7, 1.3){\scriptsize$v_1$};
\node[draw=none]  at(1.7, 2.3){\scriptsize$v_2$};
\node[draw=none]  at (2.7, 3.3){\scriptsize$v_3$};
\node[draw=none]  at (3.7, 4.3){\scriptsize$v_4$};

\node[draw=none]  at (1.3, -0.3){\scriptsize$w_1$};
\node[draw=none]  at (2.3, 0.7){\scriptsize$w_2$};
\node[draw=none]  at (3.3, 1.7){\scriptsize$w_3$};
\node[draw=none]  at (4.3, 2.7){\scriptsize$w_4$};

\end{tikzpicture}

	\caption{A $\{u_1,w_1\}$-resolved embedding $\varphi$ of the graph $L_{12}$ in $P_{5}^{\boxtimes,2}$.}
	\label{L_12}
\end{figure}

\section{Concluding Remarks}

In this paper we introduced the threshold strong dimension of a graph. We established an expression for the threshold strong dimension of a graph in terms of a minimum number of paths, each of sufficiently large order, whose strong product admits a certain type of embedding of the graph. We used this embedding result to show that there are graphs whose threshold dimension does not equal the threshold strong dimension. This embedding  result also led to the main idea for determining all graphs with vertex covering number 2 that can be realized as the strong resolving graph of a graph and it was used to show that all trees with strong dimension 3 or 4 have threshold strong dimension 2. For graphs in general we established sharp upper bounds for the threshold strong dimension.

We did not consider the computational complexity of finding the threshold strong dimension of a graph. In particular it is not known whether the following problems are NP-complete:

\medskip

\noindent {\bf Problem 1} For a given graph $G$ and positive integer $b$, does there exist $H \in \mathcal{U}(G)$ and a set $B \subseteq V(G)$ of cardinality $b$ such that $B$ strongly resolves $H$?

\medskip

\noindent {\bf Problem 2} Is Problem 1 NP-complete even  if we restrict ourselves to the class of trees?

\medskip
Recall that a graph $G$ is $\beta_s$-\emph{irreducible} if $\beta_s(G)=\tau_s(G)$.  The paths are precisely the graphs with strong dimension 1 that are $\beta_s$-irreducible. As remarked before, all graphs of strong dimension 2 are also $\beta_s$-irreducible.  The complete graphs of order $n$ are the graphs with strong dimension $n-1$ that are $\beta_s$-irreducible.  However, the following problem remains open.  

\medskip

\noindent {\bf Problem 3} For a given $k$, $3 \leq k \leq n-2$, characterize all graphs of order $n$ and strong dimension $k$ that are $\beta_s$-irreducible.

\bigskip

\bigskip

\section*{Author Contact Information}
\begin{enumerate}
\item Nadia Benakli, Department of Mathematics, New York City College of Technology, 300 Jay Street, Brooklyn, NY 11201, USA 
\item Novi H. Bong, Department of Mathematical Sciences, University of Delaware, 15 Orchard Road, Newark, DE, 19716, USA.
\item Shonda Dueck (Gosselin), Department of Mathematics and Statistics, The University of Winnipeg, 515 Portage Ave., Winnipeg, MB R3B 2E9, CANADA
\item Linda Eroh, Department of Mathematics, University of Wisconsin Oshkosh, 800 Algoma Boulevard, Oshkosh, WI 54963, USA.
\item Beth Novick, School of Mathematical and Statistical Sciences, Clemson University, O-110 Martin Hall, Box 340975, Clemson,  SC, 29634,  USA.
\item Ortrud R. Oellermann, Department of Mathematics and Statistics, The University of Winnipeg, 515 Portage Ave., Winnipeg, MB R3B 2E9, CANADA
\end{enumerate}

\section*{Acknowledgements}
We thank the Institute for Mathematics and its Applications at the University of Minnesota for hosting and sponsoring the Workshop for Women in Graph Theory and Applications, August 18-23, 2019, where the research for this project was started. \\

\noindent The last author is supported by an NSERC Grant CANADA, Grant number RGPIN-2016-05237.

\end{document}